\documentclass[11pt]{article}
\usepackage{amsmath,amssymb,amsthm}
\usepackage[a4paper,margin=1in]{geometry}
\usepackage[colorlinks=true,linkcolor=blue,citecolor=blue,urlcolor=blue]{hyperref}
\allowdisplaybreaks
\numberwithin{equation}{section}
\newtheorem{theorem}{Theorem}[section]
\newtheorem{lemma}[theorem]{Lemma}
\newtheorem{proposition}[theorem]{Proposition}
\newtheorem{corollary}[theorem]{Corollary}
\theoremstyle{definition}\newtheorem{assumption}[theorem]{Assumption}

\theoremstyle{remark}

\title{Positive normalized solutions for a singular regularized $p(x)$-Laplacian Dirichlet problem}
\author{Mustafa Avci\\Faculty of Science and Technology, Applied Mathematics, Athabasca University\\\texttt{mavci@athabascau.ca (primary), avcixmustafa@gmail.com}}
\date{}
\begin{document}\maketitle

\begin{abstract}
We study a singular Dirichlet problem driven by a regularized $p(x)$-Laplacian under a prescribed modular constraint. Using growth and differentiability estimates for the regularized
energy, together with coercivity and modular compactness, we construct nonnegative constrained minimizers for a family of nonsingular approximating functionals. Under the admissibility of signed constrained variations, each minimizer satisfies an approximate Euler--Lagrange equation with a uniquely determined constraint multiplier. An explicit uniform boundary lower barrier and a compatible weight condition, combined with a variable exponent Hardy inequality, provide global dual control of the singular sources and convergence against every zero-trace Sobolev test function.
For these approximate minimizing pairs, the singular-limit result gives subsequential strong Sobolev convergence of the states and convergence of their multipliers, while preserving positivity and the prescribed modular. The limiting pair is therefore a positive normalized weak solution of the singular problem, obtained by removing the reaction regularization while keeping the gradient regularization fixed.
\end{abstract}

\noindent\textbf{Keywords.} regularized $p(x)$-Laplacian; normalized weak solution; singular reaction; constrained minimization; Dirichlet problem; Hardy inequality.\\
\textbf{2020 Mathematics Subject Classification.} 35J60, 35J75, 35J92, 46E30.

\section{Introduction}\label{sec:introduction}

The purpose of this paper is to construct positive normalized weak solutions of the singular $\varepsilon$-regularized $p(x)$-Laplacian Dirichlet problem
\begin{equation}\label{eq:main-eigen-system}
 \left\{
 \begin{aligned}
  -\mathrm{div} \left((\varepsilon^2+|\nabla u|^2)^{ \frac{p(x)-2}{2} }\nabla u\right)&= \lambda u^{p(x)-1} + \mu h(x)u^{-\gamma(x)},\\
 u&>0 \text{ in }\Omega, \,\, u=0\text{ on }\partial\Omega,\\
 J(u)&=\int_\Omega\frac{u^{p(x)}}{p(x)}dx=m.
 \end{aligned}
 \right.
 \end{equation}
Here $\Omega\subset\mathbb{R}^N$, $N\geq 2$, is a bounded domain with Lipschitz boundary $\partial\Omega$; $p$ is log-H\"older continuous on $\overline\Omega$; $\varepsilon,m>0$,  $\mu \geq 0$ are fixed parameters; the parameter $\lambda=\lambda_{\varepsilon,m,\mu} \in \mathbb{R}$ is not prescribed in advance, it arises as the Lagrange multiplier associated with the modular constraint; $h\in L^\infty(\Omega)$ is a nonnegative function; and $\gamma\in C(\overline\Omega)$ with $0<\gamma<1$.\\

\noindent
\textit{Definition and motivation of the problem}. The prescribed-modular formulation is motivated by the failure of the usual constant-exponent homogeneity when the exponent varies with position. If $p$ is constant, then
\[
J(tu)=t^pJ(u),
\]
and the amplitude of a nonzero state can be normalized by a simple power scaling. If $p=p(x)$, however, then
\[
J(tu) = \int_\Omega \frac{t^{p(x)}|u(x)|^{p(x)}}{p(x)} dx,
\]
and no single power of $t$ can be factored out. Similarly, the variable exponent Rayleigh-type quotient is not homogeneous in the usual sense, and its lower spectral behavior may differ substantially from the constant exponent case (see, e.g., \cite{FanZhangZhao,FranzinaLindqvist}). Thus, the normalization here is a \emph{modular constraint}, rather than a prescribed value of the reaction coefficient $\lambda$. More explicitly, fixing $J(u)=m$ selects an amplitude, and $\lambda$ is the multiplier
associated with that choice. It need not be positive when $\mu>0$ and is
not identified with a positive first eigenvalue.\\

\noindent
The singular reaction $h(x)u^{-\gamma(x)}$, $0<\gamma(x)<1$, creates a second difficulty. This term becomes unbounded as $u$ approaches zero. Since the homogeneous Dirichlet condition forces the solution to vanish at the boundary, positivity and sufficient integrability of this term are not available at the outset. We therefore do not attack the target problem directly. Instead, for each $\delta>0$, we replace the singular factor $u^{-\gamma(x)}$ by the nonsingular approximation $(u+\delta)^{-\gamma(x)}$, and minimize the corresponding approximate constrained functional.\\

\noindent
A third difficulty appears when the approximation parameter tends to zero. Almost-everywhere convergence $u_\delta\rightarrow u$ does not by itself justify the passage
\[
h(x)(u_\delta+\delta)^{-\gamma(x)} \rightarrow h(x)u^{-\gamma(x)}
\]
against \textit{arbitrary} test functions in $W^{1,p(\cdot)}_0(\Omega)$, especially near the boundary where the Dirichlet solution vanishes. The relevant boundary information is the uniform boundary barrier  stated explicitly in Assumption~\ref{ass:barrier}. This uniform boundary barrier condition along with the variable exponent Hardy inequality leads to the uniform dual estimate
\[
\left| \int_\Omega h(x)(u_\delta+\delta)^{-\gamma(x)} \phi\,dx \right| \leq C \|\phi\|.
\]
The estimate is the principal mechanism that makes the singular limit meaningful for every Sobolev test function. Once the barrier is available, the variable exponent Hardy inequality, the Hölder inequality, compactness, and the $(S_+)$ property of the regularized operator yield preservation of the modular mass, positivity of the limiting state, global convergence of the singular terms, and strong convergence of the gradients.\\

\noindent
\textit{Real world application}. A possible applied interpretation of \eqref{eq:main-eigen-system} is a stationary scalar transport or reaction--diffusion model in a heterogeneous, field-responsive medium. The exponent $p(x)$ describes a spatially varying power-law response, as occurs in reduced models of electrorheological or non-Newtonian materials; $h(x)$ describes the spatial distribution of an activation, source, or catalytic effect; and the negative power represents a strongly enhanced response when the state variable becomes small. The modular condition fixes the total generalized content of the state. Equation \eqref{eq:main-eigen-system} should be regarded as a mathematically tractable reduced model for such a system rather than as a complete constitutive derivation. Its principal significance lies in showing how heterogeneous diffusion, a global normalization, a singular positive reaction, and boundary vanishing can be treated within one consistent variational framework.\\

\noindent
\textit{Literature review}. The study of equations with variable exponents has attracted considerable attention over the past two decades due to their important role in various areas of mathematics, including the calculus of variations and partial differential equations \cite{cruz2013variable,diening2011lebesgue}. Moreover, such equations arise naturally in a wide range of physical and engineering applications, including the modeling of electrorheological fluids \cite{ruzicka2007electrorheological}, the analysis of non-Newtonian fluids \cite{zhikov1997meyer}, fluid flow through porous media \cite{amaziane2009nonlinear}, magnetostatics \cite{cekic2012lp}, image restoration \cite{chen2006variable}, and capillarity phenomena \cite{avci2013ni}. We also refer the reader to \cite{Boureanu,heidarkhani2022critical,yucedag2015existence} and the references therein for further developments and applications.\\

\noindent
For singular problems involving variable exponents, the literature is extensive and cannot be reviewed exhaustively here. Instead, we highlight several foundational works and recent studies that are particularly relevant to the present paper.\\

\noindent
Fan, Zhang, and Zhao \cite{FanZhangZhao} show that the Dirichlet $p(x)$-Laplacian spectrum can have either zero or positive infimum under different hypotheses. Franzina and Lindqvist \cite{FranzinaLindqvist} instead derive an eigenvalue problem from a homogeneous quotient of two Luxemburg norms. These formulations should not be conflated with \eqref{eq:main-eigen-system}. Here the prescribed quantity is $\int_\Omega |u|^{p(x)}/p(x)dx x=m$, not a Luxemburg-norm quotient, and the singular perturbation is part of the constrained energy. The factor $1/p(x)$ in $J$ yields precisely $u^{p(x)-1}$ in its first variation. Our objective is a positive normalized weak state and its approximation, not a description of the nonlinear spectrum, simplicity of an eigenvalue, or identification of a first eigenvalue.\\

\noindent
Zhang \cite{ZhangSingular} obtains positive solutions of Dirichlet $p(x)$-Laplacian equations with a decreasing singular reaction and an additional power term, using subsolution and supersolution methods. The present problem \eqref{eq:main-eigen-system} instead couples $0<\gamma(x)<1$ to a fixed modular constraint and retains a positive gradient-regularization parameter. Thus its distinct question is whether approximate constrained states, together with their multipliers, converge to a globally testable singular equation.\\

\noindent
Papageorgiou, R\u{a}dulescu, and Sun \cite{PapageorgiouRadulescuSun} prove the existence of at least two smooth positive solutions for a nonparametric anisotropic problem combining singular and superlinear effects. Their variational, truncation, and comparison techniques address multiplicity and regularity. Neither conclusion is sought in the present manuscript, whose results remain in
$W_0^{1,p(\cdot)}(\Omega)$ and whose extra boundary information is stated as an assumption. Thus, the present work is not a regularity improvement of the existing singular theory.\\

\noindent
Majdak and Papageorgiou \cite{MajdakPapageorgiou} study singular equations with unbalanced $(p(x),q(x))$-growth, an unbounded source coefficient, and a singular exponent below one only near the boundary. They establish a unique bounded positive solution through approximation and use a weak formulation with all admissible Sobolev tests. Their argument also uses Hardy estimates and $(S_+)$ compactness. Our problem differs through the single regularized flux and the fixed modular constraint, which introduce an unknown multiplier.\\

\noindent
The broader theory of singular elliptic equations shows that negative powers of the unknown occur naturally in stationary reaction--diffusion models and lead to distinctive boundary, integrability, and regularity questions (see, e.g., \cite{Avci-1,Avci-2,Ghanmi,HernandezManceboVega,Lu,OlivaPetitta}).\\

\noindent
The problem studied here differs in its variational architecture. The solution is selected on the nonnegative modular manifold $\mathcal M_m^+$, the multiplier is generated by the modular constraint, and the singular equation is obtained from constrained nonsingular approximations. Consequently, the limiting argument must establish more than positivity and weak solvability: it must simultaneously preserve the exact value $J(u)=m$, control the sequence of multipliers, and identify the singular source against every test function in $W_0^{1,p(\cdot)}(\Omega)$.\\
\newpage
\noindent
\textit{Organization of the paper}. The remainder of the paper is organized as follows. Section~\ref{Auxiliary} collects and establishes the auxiliary results needed for the constrained approximation. After stating the baseline assumptions and the additional boundary-barrier and weight hypothesis, it records the variable exponent Hardy inequality, the growth and differentiability properties of the regularized energy, and quantitative monotonicity estimates for the flux. Uniform control of the singular primitives and continuity of the modular constraint then lead to equi-coercivity and existence of approximate minimizers. The section also derives the Euler multiplier under the stated admissibility of signed constrained variations. Section~\ref{sec:limit} addresses passage to the singular reaction at fixed $\varepsilon>0$, $m>0$, and $\mu\ge0$. It first explains how the boundary barrier and the weight condition make the singular sources uniformly bounded in the Sobolev dual space and permit convergence against every fixed Sobolev test function. It then establishes the $(S_+)$ property of the regularized operator and presents the main singular-limit statement for the approximate minimizing pairs, including preservation of the modular constraint, positivity of the limiting state, and convergence of the states and multipliers.

\subsection{Functional framework}
We start with some basic concepts of variable Lebesgue-Sobolev spaces. For more details, and the proof of the following propositions, we refer the reader to \cite{cruz2013variable,diening2011lebesgue,fan2001spaces,RadulescuRepovs}.
\begin{equation*}
C_{+}\left( \overline{\Omega }\right) =\left\{ p:\,p\in C\left( \overline{\Omega }\right) ,\text{ }\inf p\left( x\right)>1\text{ for all\ }x\in
\overline{\Omega }\right\} .
\end{equation*}
For $p\in C_{+}(\overline{\Omega})$ denote
\begin{equation}\label{p}
p^- := \min_{x\in\overline\Omega}p(x) \leq p(x) \leq \max_{x\in\overline\Omega}p(x) =: p^+ < \infty.
\end{equation}
For a measurable function $u:\Omega\longrightarrow\mathbb R$,
\begin{equation}\label{eq:scalar-variable-modular}
\rho_{p(\cdot)}(u) := \int_\Omega |u(x)|^{p(x)} dx
\end{equation}
defines a convex modular on $L^{p(\cdot) }(\Omega)$.\\
\noindent
The variable exponent Lebesgue space is defined by
\begin{equation*}
L^{p(\cdot)}(\Omega) := \left\{ u:\Omega\longrightarrow\mathbb R \text{ measurable}: \rho_{p(\cdot)}(u)<\infty \right\}.
\end{equation*}
The norm defined on $L^{p(\cdot)}(\Omega)$ is given by
\begin{equation*} \|u\|_{L^{p(\cdot)}(\Omega)} = \|u\|_{p(\cdot)}:=\inf \left\{ \tau>0: \rho_{p(\cdot)} \left( \frac{u}{\tau} \right) \leq1 \right\}
\end{equation*}
and called the Luxemburg norm. Equipped with this norm, $L^{p(\cdot)}(\Omega)$ is a Banach space.\\

\begin{proposition}\label{Prop:2.1}
For any $u\in L^{p(\cdot) }(\Omega)$ and $v\in L^{p^{\prime}(\cdot)}(\Omega)$, we have
\begin{equation*}
\int_{\Omega}|uv|dx\leq C(p^{-},(p^{-})^{\prime})\|u\|_{p(\cdot)}\|v\|_{p^{\prime}(\cdot)}
\end{equation*}
where $L^{p^{\prime }(\cdot) }(\Omega) $ is the conjugate space of $L^{p(\cdot) }(\Omega)$ such that $\frac{1}{p(x)}+\frac{1}{p^{\prime}(x)}=1$.
\end{proposition}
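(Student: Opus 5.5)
The plan is to prove this variable exponent Hölder inequality by reducing it, through the Luxemburg normalization, to the pointwise Young inequality. I will show it with the constant $C=\frac{1}{p^-}+\frac{1}{(p')^-}$, where $(p')^-=\min_{\overline\Omega}p'(x)=\frac{p^+}{p^+-1}$. This constant depends only on the exponent bounds, which matches the notation $C(p^-,(p^-)')$ up to the usual convention. Since $p\in C_+(\overline\Omega)$, we have $1<p^-\le p^+<\infty$, so $p'$ is also bounded with $1<(p')^-\le (p')^+<\infty$. This means the Luxemburg norms behave well.

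First I dispose of the trivial cases. If $\|u\|_{p(\cdot)}=0$ or $\|v\|_{p'(\cdot)}=0$, then $u=0$ or $v=0$ a.e. This follows because the modular vanishes only at zero: $\rho_{p(\cdot)}(u/\tau)\le 1$ for all $\tau>0$ forces $\rho_{p(\cdot)}(u)\le \tau^{p^-}\to0$. In that case both sides vanish.

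Otherwise set $a=\|u\|_{p(\cdot)}$ and $b=\|v\|_{p'(\cdot)}$. The key step is the unit-ball property $\rho_{p(\cdot)}(u/a)\le 1$, and likewise for $v/b$. To prove it, take $\tau_k\downarrow a$ with $\rho_{p(\cdot)}(u/\tau_k)\le1$. The integrands $|u|^{p(x)}\tau_k^{-p(x)}$ increase monotonically to $|u|^{p(x)}a^{-p(x)}$, so monotone convergence gives the claim. Here boundedness of $p$ is used only to ensure the modular is finite along the sequence.

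Next I apply Young's inequality pointwise with the exponents $p(x)$ and $p'(x)$:
\[
\frac{|u(x)||v(x)|}{ab}\le \frac{1}{p(x)}\frac{|u(x)|^{p(x)}}{a^{p(x)}}+\frac{1}{p'(x)}\frac{|v(x)|^{p'(x)}}{b^{p'(x)}}
\le \frac{1}{p^-}\frac{|u|^{p(x)}}{a^{p(x)}}+\frac{1}{(p')^-}\frac{|v|^{p'(x)}}{b^{p'(x)}}.
\]
Integrating over $\Omega$ and using the unit-ball property yields
\[
\int_\Omega|uv|\,dx\le \Big(\tfrac1{p^-}+\tfrac1{(p')^-}\Big)ab .
\]
The same argument shows that $uv$ is integrable, so the left side is finite.

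The only delicate point is the unit-ball property, that is, that the infimum in the Luxemburg norm is attained in the sense $\rho(u/\|u\|)\le1$. Handling it by monotone convergence, as above, avoids any continuity issues. I also note that the resulting constant is at most $2$, since $\frac1{p^-}+\frac1{(p')^-}\le 1+1$, so any stated form of the constant follows.
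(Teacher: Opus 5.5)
Your proof is correct, and it is the standard argument: normalize by the Luxemburg norms, use the unit-ball property $\rho(u/\|u\|)\le 1$, apply Young's inequality pointwise, and integrate. The paper does not prove this proposition itself; it cites the standard variable-exponent references, where the proof is exactly this argument. The constant you get, $\frac{1}{p^-}+\frac{1}{(p')^-}$, is at most $1+\frac{1}{p^-}$, so it depends only on $p^-$, which agrees with the paper's loosely written $C(p^-,(p^-)')$.
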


\begin{proposition} \label{Prop:2.1-a}
If $p(x) >1$ for all $x\in \overline{\Omega }$ and  $\frac{1}{p(x) }+\frac{1}{p^{\prime}(x)}=1$, then
for all $\zeta,\eta\in \mathbb{R}^{N}$
\begin{equation*}
|\zeta||\eta|\leq \frac{1}{p(x)}|\zeta|^{p(x)}+\frac{1}{p^{\prime}(x)}|\eta|^{p^{\prime}(x)}.
\end{equation*}
\end{proposition}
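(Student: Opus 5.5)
The plan is to prove the inequality pointwise. Fix $x\in\overline\Omega$ and write $p=p(x)>1$, $p'=p'(x)=p/(p-1)$; since the claim involves only the scalars $a=|\zeta|\ge 0$ and $b=|\eta|\ge 0$, it suffices to show
\[
ab\le \frac{a^{p}}{p}+\frac{b^{p'}}{p'}\qquad\text{for all }a,b\ge 0 ,
\]
which is the classical Young inequality with the exponent frozen at the point $x$. No uniformity in $x$ is needed, so the variable exponent plays no real role beyond requiring $p(x)>1$ (guaranteed since $p\in C_+(\overline\Omega)$), which makes $p'(x)$ finite and both exponents conjugate.

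If $a=0$ or $b=0$, the left side vanishes and the right side is nonnegative, so there is nothing to prove. For $a,b>0$, I would use concavity of the logarithm. Since $\frac1p+\frac1{p'}=1$ with both weights in $(0,1)$,
\[
\log\Big(\frac1p a^{p}+\frac1{p'}b^{p'}\Big)\ \ge\ \frac1p\log a^{p}+\frac1{p'}\log b^{p'}=\log(ab),
\]
and exponentiating gives the claim. Equivalently, one can minimize $f(a)=\frac{a^p}{p}-ab$ over $a\ge0$ for fixed $b$: the minimum is attained at $a=b^{1/(p-1)}$ with value $-\frac{b^{p'}}{p'}$.

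There is no genuine obstacle. The only point requiring care is that $p(x)$ depends on $x$, but the inequality is applied pointwise, so each instance is the constant-exponent Young inequality at the frozen exponent $p(x)$. Measurability issues do not arise because the statement is pointwise in $x$ and $\zeta,\eta$.
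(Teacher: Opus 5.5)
Your argument is correct: fixing $x$ reduces the claim to the classical Young inequality $ab\le a^{p}/p+b^{p'}/p'$ at the frozen exponent $p=p(x)>1$, and either the weighted concavity of the logarithm or the minimization of $a\mapsto a^{p}/p-ab$ establishes it. The paper gives no proof of this proposition and simply cites the standard variable exponent literature, where it is obtained in exactly this pointwise way.
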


\begin{proposition}\label{Prop:2.2}
If $u,u_{n}\in L^{p(\cdot) }(\Omega)$ ($n=1,2,...$), we have
\begin{itemize}
\item[\rm(i)] $\|u\|_{p(\cdot) }<1 ( =1;>1) \Leftrightarrow \rho_{p(\cdot)}(u) <1 (=1;>1)$;
\item[\rm(ii)] $\|u\|_{p( \cdot) }>1 \implies \|u\|_{p(\cdot)}^{p^{-}}\leq \rho_{p(\cdot)}(u) \leq \|u\|_{p( \cdot) }^{p^{+}}$;
\item[\rm(iii)]$\|u\|_{p(\cdot) }\leq1 \implies \|u\|_{p(\cdot) }^{p^{+}}\leq \rho_{p(\cdot)}(u) \leq \|u\|_{p(\cdot) }^{p^{-}}$;
\item[\rm(iv)] $\lim\limits_{n\rightarrow \infty }\|u_{n}\|_{p(\cdot)}=0\Leftrightarrow \lim\limits_{n\rightarrow \infty }\rho_{p(\cdot)}
(u_{n})=0;\lim\limits_{n\rightarrow \infty }\|u_{n}\|_{p(\cdot)}=\infty \Leftrightarrow \lim\limits_{n\rightarrow \infty }\rho_{p(\cdot)}
(u_{n})=\infty.$
\end{itemize}
\end{proposition}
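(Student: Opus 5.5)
The plan is to derive everything from the definition of the Luxemburg norm together with one elementary scaling inequality for the modular: for $u\in L^{p(\cdot)}(\Omega)$,
\[
t^{p^-}\rho_{p(\cdot)}(u)\le\rho_{p(\cdot)}(tu)\le t^{p^+}\rho_{p(\cdot)}(u)\quad (t\ge1),
\]
\[
t^{p^+}\rho_{p(\cdot)}(u)\le\rho_{p(\cdot)}(tu)\le t^{p^-}\rho_{p(\cdot)}(u)\quad (0<t\le1).
\]
These follow pointwise from $t^{p^+}\le t^{p(x)}\le t^{p^-}$ when $t\le 1$ (reversed when $t\ge1$) and integration. The function $\varphi(\tau)=\rho_{p(\cdot)}(u/\tau)$ is then continuous and strictly decreasing on $(0,\infty)$ when $u\neq0$: continuity comes from dominated convergence, since $p^+<\infty$ gives $|u/\tau|^{p(x)}\le \max(\tau_0^{-p^+},\tau_0^{-p^-})|u|^{p(x)}$ for $\tau\ge\tau_0$. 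Moreover $\varphi(\tau)\to\infty$ as $\tau\to0$ and $\varphi(\tau)\to0$ as $\tau\to\infty$. Consequently the infimum defining $\|u\|_{p(\cdot)}$ is attained, with $\rho_{p(\cdot)}(u/\|u\|_{p(\cdot)})=1$ for every $u\ne0$. I expect this attainment step to be the only genuinely delicate point, because it is exactly where $p^+<\infty$ is needed; the rest is algebra.

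For (ii) and (iii), write $\lambda=\|u\|_{p(\cdot)}>0$ and $u=\lambda v$ with $\rho_{p(\cdot)}(v)=1$. If $\lambda>1$, the scaling bounds with $t=\lambda$ give $\lambda^{p^-}\le\rho_{p(\cdot)}(u)\le\lambda^{p^+}$, which is (ii). If $0<\lambda\le1$, they give $\lambda^{p^+}\le\rho_{p(\cdot)}(u)\le\lambda^{p^-}$, which is (iii). The case $u=0$ is trivial.

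Item (i) then follows directly. If $\lambda<1$, (iii) gives $\rho\le\lambda^{p^-}<1$. If $\lambda=1$, then $\rho=1$ by attainment. If $\lambda>1$, (ii) gives $\rho\ge\lambda^{p^-}>1$. The three cases are exhaustive and mutually exclusive, so the converse implications hold as well.

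For (iv), the sandwich bounds from (ii) and (iii) give the result. If $\|u_n\|\to0$, then eventually $\|u_n\|\le1$ and $\rho(u_n)\le\|u_n\|^{p^-}\to0$. Conversely, if $\rho(u_n)\to0$, then by (i) eventually $\|u_n\|<1$, and $\|u_n\|\le\rho(u_n)^{1/p^+}\to0$. The statement about $\infty$ is symmetric: if $\|u_n\|\to\infty$, then $\rho(u_n)\ge\|u_n\|^{p^-}\to\infty$; if $\rho(u_n)\to\infty$, then eventually $\|u_n\|>1$ and $\|u_n\|\ge\rho(u_n)^{1/p^+}\to\infty$.
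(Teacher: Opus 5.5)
Your proof is correct. The key step is the normalization $\rho_{p(\cdot)}(u/\|u\|_{p(\cdot)})=1$ for $u\neq0$, which is where $p^+<\infty$ is used. Combined with the pointwise scaling bounds $t^{p^\pm}$, it gives (ii)--(iii), then the trichotomy (i), and the sandwich estimates give (iv).

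The paper does not prove this proposition itself; it cites the standard references (Fan--Zhao and the variable exponent monographs). Their proof is essentially the argument you give.
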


\begin{proposition}\label{Prop:2.3}
If $u,u_{n}\in L^{p(\cdot)}(\Omega)$ ($n=1,2,...$), then the following statements are
equivalent:
\begin{itemize}
\item[\rm(i)] $\lim\limits_{n\rightarrow \infty}\|u_{n}-u\|_{p(\cdot)}=0$;
\item[\rm(ii)] $\lim\limits_{n\rightarrow \infty }\rho_{p(\cdot)} (u_{n}-u)=0$;
\item[\rm(iii)] $u_{n}\rightarrow u\mathit{\ }$\textit{in measure in}$\mathit{\ }\Omega \mathit{\ }$\textit{and}
$\mathit{\ }\lim\limits_{n\rightarrow \infty}\rho_{p(\cdot)} (u_{n})=\rho_{p(\cdot)}(u)$.
\end{itemize}
\end{proposition}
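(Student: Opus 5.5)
The statement to prove is Proposition~\ref{Prop:2.3}: norm convergence, modular convergence of $u_n-u$, and convergence in measure together with $\rho_{p(\cdot)}(u_n)\to\rho_{p(\cdot)}(u)$ are equivalent. The plan is to prove (i)$\Leftrightarrow$(ii), then (ii)$\Rightarrow$(iii), and finally (iii)$\Rightarrow$(ii), which carries the real content.

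\textbf{(i)$\Leftrightarrow$(ii).} This is Proposition~\ref{Prop:2.2}(iv) applied to the sequence $u_n-u$, so nothing further is needed.

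\textbf{(ii)$\Rightarrow$(iii).} Convergence in measure comes from a Chebyshev-type bound. For $\eta\in(0,1]$, on the set $\{|u_n-u|>\eta\}$ we have $|u_n-u|^{p(x)}\ge \eta^{p^+}$. Hence
\[
|\{|u_n-u|>\eta\}|\le \eta^{-p^+}\rho_{p(\cdot)}(u_n-u)\to0 .
\]
For the convergence of the modulars, fix $\theta\in(0,1)$ and use convexity of $t\mapsto|t|^{p(x)}$:
\[
|u_n|^{p(x)}\le (1-\theta)^{1-p(x)}|u|^{p(x)}+\theta^{1-p(x)}|u_n-u|^{p(x)} .
\]
Integrating, we get $\limsup_n\rho_{p(\cdot)}(u_n)\le (1-\theta)^{1-p^+}\rho_{p(\cdot)}(u)$. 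Letting $\theta\to0$ gives $\limsup_n\rho_{p(\cdot)}(u_n)\le\rho_{p(\cdot)}(u)$. The same inequality with the roles of $u$ and $u_n$ exchanged gives $\rho_{p(\cdot)}(u)\le\liminf_n\rho_{p(\cdot)}(u_n)$.

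\textbf{(iii)$\Rightarrow$(ii).} This is the main obstacle, since convergence in measure gives no integral control by itself. I would argue by subsequences: it suffices to show that every subsequence has a further subsequence along which $\rho_{p(\cdot)}(u_n-u)\to0$.

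1. Given a subsequence, convergence in measure yields a further subsequence with $u_n\to u$ a.e.

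2. Set $g_n:=2^{p(x)-1}\big(|u_n|^{p(x)}+|u|^{p(x)}\big)-|u_n-u|^{p(x)}$. Convexity gives $g_n\ge0$, and $g_n\to 2^{p(x)}|u|^{p(x)}$ a.e.

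3. Apply Fatou's lemma to $g_n$, using the hypothesis $\rho_{p(\cdot)}(u_n)\to\rho_{p(\cdot)}(u)$ together with the fact that $2^{p(x)-1}$ is a bounded weight. This should give $\limsup_n\rho_{p(\cdot)}(u_n-u)\le0$.

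The delicate point is in step 3. Since $p$ is variable, the weight $2^{p(x)-1}$ does not factor out of the integral. So I need $\int 2^{p(x)-1}|u_n|^{p(x)}\,dx\to\int 2^{p(x)-1}|u|^{p(x)}\,dx$, not just $\rho_{p(\cdot)}(u_n)\to\rho_{p(\cdot)}(u)$. To get this, I would use the Brezis--Lieb/Vitali route:
\begin{itemize}
\item $|u_n|^{p(x)}\to|u|^{p(x)}$ a.e., with nonnegative integrands and convergent integrals.
\item Scheffé's lemma then gives $|u_n|^{p(x)}\to|u|^{p(x)}$ in $L^1(\Omega)$.
\item Hence the family $\{|u_n|^{p(x)}\}$ is uniformly integrable, and multiplying by the bounded weight preserves the $L^1$ convergence.
\end{itemize}
Uniform integrability then transfers to $|u_n-u|^{p(x)}\le 2^{p^+-1}\big(|u_n|^{p(x)}+|u|^{p(x)}\big)$. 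Vitali's theorem directly gives $\rho_{p(\cdot)}(u_n-u)\to0$ along the subsequence, and this last step can replace the Fatou argument. Since the limit $0$ does not depend on the subsequence, the whole sequence converges, which proves (ii).
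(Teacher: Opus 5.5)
Your proof is correct. The paper does not prove Proposition~\ref{Prop:2.3}; it quotes it from the standard references (\cite{fan2001spaces,diening2011lebesgue}), so the comparison is with the usual textbook argument.

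\textbf{Steps that match the standard proof.} Your (i)$\Leftrightarrow$(ii) via Proposition~\ref{Prop:2.2}(iv) is the standard argument. So is your (ii)$\Rightarrow$(iii), which uses the Chebyshev bound and the convex splitting $|u_n|^{p(x)}\le(1-\theta)^{1-p(x)}|u|^{p(x)}+\theta^{1-p(x)}|u_n-u|^{p(x)}$.

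\textbf{Where you differ: (iii)$\Rightarrow$(ii).} The ``delicate point'' you flag comes only from putting the variable weight $2^{p(x)-1}$ into $g_n$. Use the constant $K:=2^{p^+-1}$ instead. The pointwise bound $|u_n-u|^{p(x)}\le K\bigl(|u_n|^{p(x)}+|u|^{p(x)}\bigr)$ still holds, so $g_n:=K\bigl(|u_n|^{p(x)}+|u|^{p(x)}\bigr)-|u_n-u|^{p(x)}\ge0$. Along an a.e.-convergent subsequence, $g_n\to2K|u|^{p(x)}$ a.e. Fatou's lemma then gives $2K\rho_{p(\cdot)}(u)\le 2K\rho_{p(\cdot)}(u)-\limsup_n\rho_{p(\cdot)}(u_n-u)$. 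This uses only $\rho_{p(\cdot)}(u_n)\to\rho_{p(\cdot)}(u)<\infty$. An equivalent version applies Fatou to $\frac12\bigl(|u_n|^{p(x)}+|u|^{p(x)}\bigr)-\left|\frac{u_n-u}{2}\right|^{p(x)}\ge0$ and then uses $\rho_{p(\cdot)}(v)\le2^{p^+}\rho_{p(\cdot)}(v/2)$.

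\textbf{Your Scheff\'e--Vitali route is still valid.} Scheff\'e applies because the integrands $|u_n|^{p(x)}$ are nonnegative, converge a.e., and have convergent finite integrals. Domination of $|u_n-u|^{p(x)}$ by a uniformly integrable family, together with $|\Omega|<\infty$, is exactly what Vitali requires. Your subsequence-of-subsequences closing step is also correct. Your route needs more machinery than the one-line Fatou argument, but it gives the stronger by-product $|u_n|^{p(\cdot)}\to|u|^{p(\cdot)}$ in $L^1(\Omega)$.
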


\noindent
The variable exponent Sobolev space $W^{1,p(\cdot)}(\Omega)$ is defined as
\[
W^{1,p(\cdot)}(\Omega) = \left\{ u \in L^{p(\cdot)}(\Omega) : |\nabla u| \in L^{p(\cdot)}(\Omega) \right\}
\]
and equipped with the norm
\[
\|u\|_{W^{1,p(\cdot)}(\Omega)} := \|u\|_{p(\cdot)} + \|\nabla u\|_{p(\cdot)}.
\]
The space $W^{1,p(\cdot)}_0(\Omega)$ is defined as the closure of $C_0^\infty(\Omega)$ with respect to the $W^{1,p(\cdot)}(\Omega)$ norm. Provided $p$ satisfies the log-H\"older continuity condition, and $|\Omega|<\infty$, the Poincar\'e inequality holds on $W^{1,p(\cdot)}_0(\Omega)$. Consequently, the gradient norm provides an equivalent norm on $W^{1,p(\cdot)}_0(\Omega)$; that is,
\[
\|u\|_{W^{1,p(\cdot)}_0(\Omega)}=\|u\|:=\|\nabla u\|_{p(\cdot)}\quad \forall u \in W^{1,p(\cdot)}_0(\Omega).
\]
We will show the conjugate space of $W^{1,p(\cdot)}_0(\Omega)$  by $(W^{1,p(\cdot)}_0(\Omega))^*$ and the corresponding norm by
\[
\|\cdot\|_ {(W^{1,p(\cdot)}_0(\Omega))^*}= \|\cdot\|_ {*}.
\]
\begin{proposition}\label{Prop:2.4} If \eqref{p} holds, then the spaces
$L^{p(\cdot)}( \Omega)$ and $W^{1,p(\cdot)}( \Omega)$ are separable and reflexive Banach spaces.
\end{proposition}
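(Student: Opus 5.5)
We argue for $L^{p(\cdot)}(\Omega)$ first and then transfer the conclusions to $W^{1,p(\cdot)}(\Omega)$ by an isometric embedding into a product of Lebesgue spaces.

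\emph{Reflexivity of $L^{p(\cdot)}(\Omega)$.} We show uniform convexity and invoke the Milman--Pettis theorem. Since $1<p^-\le p^+<\infty$, the scalar function $t\mapsto |t|^{p(x)}$ is uniformly convex, uniformly in $x$. Hence for every $\epsilon>0$ there is $\delta>0$ (depending only on $p^\pm$) such that
\[
\rho_{p(\cdot)}(u)\le 1,\quad \rho_{p(\cdot)}(v)\le 1,\quad \rho_{p(\cdot)}\Bigl(\frac{u-v}{2}\Bigr)\ge\epsilon \ \Longrightarrow\ \rho_{p(\cdot)}\Bigl(\frac{u+v}{2}\Bigr)\le 1-\delta .
\]
This is a pointwise Clarkson-type inequality, integrated over $\Omega$; for $p(x)\ge2$ and $p(x)<2$ it is handled by the two classical Clarkson regimes. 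Proposition~\ref{Prop:2.2} converts these modular statements into norm statements, because $\|u\|_{p(\cdot)}\le1$ if and only if $\rho_{p(\cdot)}(u)\le1$. Moreover, a uniform modular gap forces a uniform norm gap below $1$, since $\|w\|_{p(\cdot)}^{p^+}\le\rho_{p(\cdot)}(w)\le\|w\|_{p(\cdot)}^{p^-}$ for $\|w\|_{p(\cdot)}\le 1$. The resulting norm gap is $1-(1-\delta)^{1/p^+}$.

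An alternative route is to identify $(L^{p(\cdot)})^*\cong L^{p'(\cdot)}$ via Proposition~\ref{Prop:2.1} and a Radon--Nikodym argument, and apply this identification twice. The uniform convexity route avoids the duality theory, so we follow it.

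\emph{Separability of $L^{p(\cdot)}(\Omega)$.} Because $p^+<\infty$, the modular has the $\Delta_2$ property, and by dominated convergence simple functions are dense. Concretely, approximate $u$ by truncations $T_k u$ supported on $\{|u|\le k\}\cap\Omega_k$ with $\Omega_k$ compact. Then $\rho_{p(\cdot)}(u-T_ku)\to0$, and by Proposition~\ref{Prop:2.3} this is norm convergence. Next, approximate each bounded, compactly supported function in $L^{p^+}$-norm by rational linear combinations of indicators of rational cubes. Since $|\Omega|<\infty$, we have $\rho_{p(\cdot)}(w)\le \int_\Omega \max(|w|^{p^-},|w|^{p^+})\,dx$, so this approximation is also a $\rho_{p(\cdot)}$-approximation. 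Together these give a countable dense set.

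\emph{Transfer to $W^{1,p(\cdot)}(\Omega)$.} Consider
\[
u\mapsto (u,\partial_1u,\dots,\partial_Nu)\in \bigl(L^{p(\cdot)}(\Omega)\bigr)^{N+1},
\]
where the product carries the norm $\|u\|_{p(\cdot)}+\sum_i\|\partial_i u\|_{p(\cdot)}$. This norm is equivalent to $\|u\|_{W^{1,p(\cdot)}}$, because $\|\,|\nabla u|\,\|_{p(\cdot)}$ is comparable to $\sum_i\|\partial_iu\|_{p(\cdot)}$. The map is an isomorphism onto its image. The image is closed: if $u_n\to u$ and $\partial_iu_n\to g_i$ in $L^{p(\cdot)}$, these sequences also converge in $L^1$ (by Proposition~\ref{Prop:2.1} with the constant function $1$, using $|\Omega|<\infty$). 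Passing to the limit in the distributional identity $\int u_n\partial_i\varphi=-\int \partial_iu_n\,\varphi$ gives $g_i=\partial_iu$.

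A closed subspace of a separable reflexive space is separable and reflexive. Finite products of reflexive (separable) spaces are reflexive (separable). Hence $W^{1,p(\cdot)}(\Omega)$ is separable and reflexive.

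\emph{Main obstacle.} The delicate step is the uniform convexity estimate with constants independent of $x$. Its proof relies on compactness of the interval $[p^-,p^+]$ to make $\delta$ uniform, and in the case $p(x)<2$ a small-value regime must be treated separately. If this becomes messy, we fall back on the duality argument $\bigl((L^{p(\cdot)})^{*}\bigr)^{*}=L^{p(\cdot)}$.
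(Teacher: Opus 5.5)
Your argument is correct. The paper gives no proof of its own here; it only points to the standard references. What you wrote is essentially the standard argument found there: uniform convexity of the modular, transferred to the Luxemburg norm via Proposition~\ref{Prop:2.2}, plus Milman--Pettis; density of rational step functions; and $W^{1,p(\cdot)}(\Omega)$ realized as a closed subspace of $\bigl(L^{p(\cdot)}(\Omega)\bigr)^{N+1}$.

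The one step to tighten is $1<p(x)<2$, where Clarkson's inequality does not integrate directly to a modular estimate. Use instead the following fact, which follows from compactness of $[p^-,p^+]$ and homogeneity: $|a-b|\ge\eta\max\{|a|,|b|\}$ implies $\bigl|\tfrac{a+b}{2}\bigr|^{q}\le(1-\kappa(\eta))\tfrac{|a|^{q}+|b|^{q}}{2}$ for all $q\in[p^-,p^+]$. Then split $\Omega$ according to whether this relative gap holds. The complementary set contributes at most $2(\eta/2)^{p^-}$ to $\rho_{p(\cdot)}\bigl(\tfrac{u-v}{2}\bigr)$. This treats both ranges of $p(x)$ at once and yields your modular implication with $\delta=\kappa(\eta)\epsilon/4$.
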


\begin{proposition}\label{Prop:2.5}  If $\Omega$ has finite measure, and $q(x)\leq p(x)$ almost everywhere in  $\Omega$, then the embedding
$L^{p(\cdot)}(\Omega) \hookrightarrow L^{q(\cdot)}(\Omega)$ is continuous.
\end{proposition}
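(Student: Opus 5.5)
The plan is to reduce the embedding to the Hölder inequality of Proposition~\ref{Prop:2.1}, applied with an auxiliary exponent, and then convert the resulting modular estimate into a norm estimate by means of Proposition~\ref{Prop:2.2}. First I would normalize and assume $q\in C_+(\overline\Omega)$, or at least $1\le q^-\le q(x)\le p(x)\le p^+<\infty$. Given $u\in L^{p(\cdot)}(\Omega)$, the aim is a bound of the form $\|u\|_{q(\cdot)}\le C\|u\|_{p(\cdot)}$ with $C$ depending only on $|\Omega|$ and the exponents. Since both spaces are normed, homogeneity reduces this to the unit sphere: it suffices to show that $\|u\|_{p(\cdot)}=1$ implies $\|u\|_{q(\cdot)}\le C$.

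For this I would first avoid Hölder altogether and use a pointwise splitting. On the set where $|u(x)|\ge1$ one has $|u|^{q(x)}\le|u|^{p(x)}$ because $q\le p$, and on the set where $|u(x)|<1$ one has $|u|^{q(x)}\le 1$. Hence
\[
\rho_{q(\cdot)}(u)\le \rho_{p(\cdot)}(u)+|\Omega|.
\]
This already gives $u\in L^{q(\cdot)}(\Omega)$, so the inclusion holds set-theoretically. If $\|u\|_{p(\cdot)}=1$, then Proposition~\ref{Prop:2.2}(i) gives $\rho_{p(\cdot)}(u)=1$, and therefore $\rho_{q(\cdot)}(u)\le 1+|\Omega|$.

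The next step converts this modular bound into a norm bound. If $\|u\|_{q(\cdot)}>1$, Proposition~\ref{Prop:2.2}(ii) applied with the exponent $q$ gives $\|u\|_{q(\cdot)}^{q^-}\le\rho_{q(\cdot)}(u)\le 1+|\Omega|$. It follows that
\[
\|u\|_{q(\cdot)}\le \max\{1,(1+|\Omega|)^{1/q^-}\}=:C.
\]
Applying this to $u/\|u\|_{p(\cdot)}$ for an arbitrary $u\neq0$ and using homogeneity of the Luxemburg norm gives $\|u\|_{q(\cdot)}\le C\|u\|_{p(\cdot)}$. This is continuity of the linear inclusion map.

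The only delicate point is the regime $q^-=1$, or measurable rather than continuous $q$. Proposition~\ref{Prop:2.2} is stated for $C_+$ exponents, but its proof only uses $1\le q^-\le q^+<\infty$, which is the range needed here. Measurability of $q$ suffices for the splitting argument, so no log-Hölder regularity is needed. Finiteness of $|\Omega|$ is used exactly once, to bound the contribution of the set where $|u|<1$. This is also where the result genuinely fails on unbounded domains. As an alternative, I could apply Proposition~\ref{Prop:2.1} to $|u|^{q(x)}\cdot 1$ with exponent $p/q$, but the splitting argument is shorter and I expect it to go through without obstacle.
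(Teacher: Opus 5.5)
Your argument is correct. The pointwise bound $|u|^{q(x)}\le |u|^{p(x)}+1$ gives $\rho_{q(\cdot)}(u)\le\rho_{p(\cdot)}(u)+|\Omega|$, and Proposition~\ref{Prop:2.2} then yields $\|u\|_{q(\cdot)}\le(1+|\Omega|)^{1/q^-}\|u\|_{p(\cdot)}$; you also rightly state the needed normalization $q^-\ge1$ explicitly. The paper gives no proof of this proposition and only quotes it from the cited monographs, where the standard argument is essentially this splitting or, alternatively, the H\"older route with exponent $p/q$ that you mention.
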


\begin{proposition}\label{Prop:2.6} Let $q\in C(\overline{\Omega })$. If $1\leq q(x) <p^{\ast }(x)$ for all $x\in
\overline{\Omega }$, then the embedding $W^{1,p(\cdot)}(\Omega) \hookrightarrow L^{q(\cdot)}(\Omega)$ is compact and continuous, where
\[p^{\ast }( x) =
\begin{cases}
  \frac{Np(x)}{N-p(x)}, & \mbox{if } p(x)<N, \\
   +\infty ,            & \mbox{if } p(x)\geq N.
\end{cases} \]
\end{proposition}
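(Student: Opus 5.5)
The statement to prove is Proposition~\ref{Prop:2.6}: the embedding $W^{1,p(\cdot)}(\Omega)\hookrightarrow L^{q(\cdot)}(\Omega)$ is continuous and compact whenever $q\in C(\overline\Omega)$ and $1\le q(x)<p^\ast(x)$ on $\overline\Omega$.

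The plan is to reduce the variable exponent statement to the classical constant exponent Rellich--Kondrachov theorem by localization. This is possible because $p$ and $q$ are continuous on the compact set $\overline\Omega$ and the inequality $q<p^\ast$ is strict.

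\textbf{Step 1: choice of local constant exponents.} For each $x_0\in\overline\Omega$ the strict inequality $q(x_0)<p^\ast(x_0)$ and continuity give a radius $r_{x_0}>0$ and constants $p_{x_0}$, $q_{x_0}$ such that, on $B_{x_0}:=B(x_0,r_{x_0})\cap\Omega$,
\[
p_{x_0}\le p(x),\qquad q(x)\le q_{x_0},\qquad q_{x_0}<(p_{x_0})^\ast .
\]
One takes $p_{x_0}=\min_{\overline{B_{x_0}}}p$ and $q_{x_0}=\max_{\overline{B_{x_0}}}q$. The last inequality holds for $r_{x_0}$ small, because the map $s\mapsto s^\ast$ is continuous and increasing on $[1,\infty)$ into $(1,\infty]$. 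When $p(x_0)\ge N$, one uses that $(p_{x_0})^\ast$ is either very large or infinite. Since $\Omega$ is Lipschitz, I would shrink the balls if necessary so that each $B_{x_0}$ is itself a Lipschitz domain, or use a partition of unity subordinate to a cover by such sets. Compactness of $\overline\Omega$ then yields a finite subcover $B_1,\dots,B_k$ with associated constants $p_i,q_i$.

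\textbf{Step 2: continuity.} Restriction gives a continuous map $W^{1,p(\cdot)}(B_i)\hookrightarrow W^{1,p_i}(B_i)$, by Proposition~\ref{Prop:2.5} applied to both $u$ and $\nabla u$, since $B_i$ has finite measure. The classical Sobolev embedding gives $W^{1,p_i}(B_i)\hookrightarrow L^{q_i}(B_i)$ compactly, because $q_i<p_i^\ast$ and $B_i$ is Lipschitz. Proposition~\ref{Prop:2.5} again gives $L^{q_i}(B_i)\hookrightarrow L^{q(\cdot)}(B_i)$ continuously. Summing over the finite cover and using the triangle inequality for the Luxemburg norm, $\|u\|_{L^{q(\cdot)}(\Omega)}\le\sum_i\|u\|_{L^{q(\cdot)}(B_i)}$, gives continuity of the global embedding.

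\textbf{Step 3: compactness.} Given a bounded sequence in $W^{1,p(\cdot)}(\Omega)$, I would extract successive subsequences, one for each of the finitely many $B_i$, that converge in $L^{q_i}(B_i)$ and hence in $L^{q(\cdot)}(B_i)$. A diagonal subsequence then converges in $L^{q(\cdot)}(B_i)$ for every $i$. Convergence in $L^{q(\cdot)}(\Omega)$ follows because $\rho_{q(\cdot)}(u_n-u)\le\sum_i\rho_{q(\cdot)}\big((u_n-u)\chi_{B_i}\big)\to0$, together with Proposition~\ref{Prop:2.3}.

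\textbf{Main obstacle.} The delicate point is Step 1 near the boundary: the local pieces $B(x_0,r)\cap\Omega$ must be extension domains, so that the constant exponent Rellich--Kondrachov theorem applies uniformly. For Lipschitz $\Omega$ I would use the local graph representation of $\partial\Omega$ and choose each piece to be a Lipschitz cylinder under that graph. Alternatively, one can multiply $u$ by a smooth partition of unity $\{\psi_i\}$. This transfers the problem to $\psi_iu$, which lies in $W^{1,p_i}$ of a Lipschitz neighborhood, or in $W^{1,p_i}_0$ of a ball for interior pieces. The bound $\|\nabla(\psi_iu)\|_{p_i}\le C\|u\|_{W^{1,p(\cdot)}}$ follows from the product rule and Proposition~\ref{Prop:2.5}.
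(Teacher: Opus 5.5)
Your localization argument is correct and is essentially the standard proof of this result. The paper does not prove it; it quotes it from the variable exponent references cited at the start of its functional framework, where the argument is the same: finitely many pieces on which $p$ and $q$ are nearly constant with $q_i<p_i^\ast$, then the chain Proposition~\ref{Prop:2.5}, the constant exponent Rellich--Kondrachov theorem, and Proposition~\ref{Prop:2.5} again. Of your two treatments of the boundary pieces, the partition of unity is the cleaner one: $\psi_i u_n$ is bounded in $W^{1,p_i}(\Omega)$, so Rellich--Kondrachov can be applied on the Lipschitz domain $\Omega$ itself, and no local piece needs to be an extension domain.
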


\section{The Variational Structure and Auxiliary Results}\label{Auxiliary}

\subsection{Assumptions} \label{assumptions}
Throughout the paper, we assume that the following hypotheses.
\begin{assumption} \label{ass:base}
The following conditions are satisfied:
\begin{enumerate}
\item[\textnormal{\rm(H1)}] The exponent $p:\overline\Omega\longrightarrow(1,\infty)$ satisfies $1 < p^- \leq p(x) \leq p^+ < \infty$.
\item[\textnormal{\rm(H2)}] $p$ is log-H\"older continuous on $\overline\Omega$. More precisely, there exists a constant $C_{\log}>0$ such that
\begin{equation*}
|p(x)-p(y)| \leq \frac{C_{\log}} {\log\!\left( e+\dfrac{1}{|x-y|} \right)} \qquad \text{for all } x,y\in\overline\Omega, \quad x\neq y.
\end{equation*}
\item[\textnormal{\rm(H3)}] $h\in L^\infty(\Omega)$ and $h\geq0$ almost everywhere in $\Omega$,\quad $h\not\equiv0$.
\item[\textnormal{\rm(H4)}] $\gamma\in C(\overline\Omega)$ and
\begin{equation*}
0 < \gamma^- \leq \gamma(x) \leq  \gamma^+ < 1 \quad \text{for every }x\in\overline\Omega.
\end{equation*}
\end{enumerate}

\end{assumption}
\begin{assumption}\label{ass:barrier}
Let $u_\delta = u_{\varepsilon,m,\mu,\delta} \in \mathcal M_m^+$ be a minimizer obtained in Lemma \ref{thm:approx}, and define
\begin{equation}
d(x) := \operatorname{dist}(x,\partial\Omega), \quad x\in\Omega. \label{eq:boundary-distance}
\end{equation}
The following additional conditions are satisfied:
\begin{enumerate}
\item[\textnormal{\rm(H5)}] There exist constants $\delta_0\in(0,1], \, c_*>0, \, \vartheta>0$, independent of $\delta$, such that
\begin{equation*}
u_\delta(x) \geq c_*d(x)^\vartheta \quad \text{for almost every }x\in\Omega \text{ and every }0<\delta\leq\delta_0. \label{hyp:boundary-barrier}
\end{equation*}
\item[\textnormal{\rm(H6)}] The boundary weight
\begin{equation*}
\omega_\vartheta(x) := h(x)d(x)^{1-\vartheta\gamma(x)} \label{eq:boundary-weight}
\end{equation*}
satisfies
\begin{equation*}
\omega_\vartheta \in L^{p'(\cdot)}(\Omega).  \label{hyp:weight}
\end{equation*}
\end{enumerate}
The constants $\delta_0, \, c_*, \, \vartheta$ may depend on $\Omega,\varepsilon,m,\mu,p,h$ and $\gamma$, but they are independent of $\delta\in(0,\delta_0]$.
\end{assumption}

\subsection{The variational structure}\label{variational-structure}
We distinguish two regularization parameters. The fixed parameter $\varepsilon>0$ regularizes the gradient-dependent diffusion, whereas the auxiliary parameter $\delta>0$ regularizes the singular reaction and will later tend to zero.\\
Under Assumption \ref{ass:base}, fix $\varepsilon,m>0$ and $\mu\geq0$.
Set $X:=W_0^{1,p(\cdot)}(\Omega)$,  and define the convex cone
\begin{equation}\label{cone}
 X_+:=\{u\in X:v\geq0\text{ a.e. in }\Omega\},
\end{equation}
For $\xi\in\mathbb R^N$, define
\begin{equation}\label{regularizatio}
r_\varepsilon(\xi) := \left( \varepsilon^2+|\xi|^2 \right)^{1/2},
\end{equation}
\begin{equation}\label{reg-flux-b}
A_\varepsilon(x,\xi) := r_\varepsilon(\xi)^{p(x)-2}\xi,
\end{equation}
and \begin{equation}\label{reg-density}
\Phi_\varepsilon(x,\xi) := \frac{ r_\varepsilon(\xi)^{p(x)} - \varepsilon^{p(x)} }{ p(x) }.
\end{equation}
The associated regularized energy is
\begin{equation}\label{reg-func}
E_\varepsilon(u) := \int_\Omega \Phi_\varepsilon(x,\nabla u) dx, \quad u\in X.
\end{equation}
The density satisfies
\[
\nabla_\xi\Phi_\varepsilon(x,\xi) = A_\varepsilon(x,\xi).
\]
The prescribed normalization is defined by the modular constraint
\begin{equation}\label{eq:normalized-modular-functional}
J(u) = \int_\Omega \frac{|u|^{p(x)}}{p(x)} \,\mathrm dx, \quad u\in X.
\end{equation}
For $m>0$, set
\begin{equation}\label{eq:prescribed-modular-constraint}
\mathcal M_m := \left\{ u\in X: J(u)=m \right\}
\end{equation}
and
\begin{equation}\label{eq:positive-prescribed-modular-constraint}
\mathcal M_m^+ := \mathcal M_m  \cap X_+.
\end{equation}
Formally, the constrained singular functional associated with the target singular problem \eqref{eq:main-eigen-system} is
\begin{equation}
I_{\mu}(u)=I_{\mu,0}(u) := E_\varepsilon(u) - \mu \int_\Omega h(x) \frac{u^{1-\gamma(x)}}{1-\gamma(x)} dx, \quad u\in X_+. \label{regular-modular-functional}
\end{equation}
Although the singular primitive is finite on $X_+$, its derivative contains $u^{-\gamma(x)}$, which is singular at $u=0$. We therefore introduce, for $0<\delta\leq1$,
\begin{equation}
G_\delta(x,s) := \frac{ (s+\delta)^{1-\gamma(x)} - \delta^{1-\gamma(x)} }{ 1-\gamma(x) }, \quad s\geq0. \label{reg-sing-func}
\end{equation}
Then
\[
G_\delta(x,0)=0 \quad\text{and}\quad \partial_sG_\delta(x,s) = (s+\delta)^{-\gamma(x)}.
\]
The approximate constrained functional is then defined by
\begin{equation}\label{normal-modular-functional}
I_{\mu,\delta}(u) := E_\varepsilon(u) - \mu \int_\Omega h(x)G_\delta(x,u) dx, \quad u\in X_+.
\end{equation}
For each fixed $\delta>0$, a minimizer $u_\delta$ of $I_{\mu,\delta}$ over $\mathcal M_m^+$, whenever signed constrained variations are admissible, has an associated Lagrange multiplier $\lambda_\delta\in\mathbb R$ and satisfies
\begin{equation}\label{eq:epsilon-delta-approximate-weak-problem}
\int_\Omega A_\varepsilon(x,\nabla u_\delta) \cdot\nabla\phi dx = \lambda_\delta \int_\Omega u_\delta^{p(x)-1}\phi dx + \mu \int_\Omega h(x) (u_\delta+\delta)^{-\gamma(x)} \phi dx,\quad \phi\in X.
\end{equation}
Thus nonnegativity restricts the admissible states, while the weak identity is tested in the full space $X$.\\
Note also that the target singular problem \eqref{eq:main-eigen-system} is obtained formally by letting $\delta\downarrow0$. Its weak formulation is to find
$(u,\lambda) \in X \times\mathbb R$
such that
\[
u>0 \quad\text{almost everywhere in }\Omega, \quad J(u)=m,
\]
and
\begin{equation}\label{eq:target-singular-weak-problem}
\int_\Omega A_\varepsilon(x,\nabla u) \cdot\nabla\phi dx = \lambda \int_\Omega u^{p(x)-1}\phi dx + \mu \int_\Omega h(x)u^{-\gamma(x)}\phi dx,\quad \phi\in X.
\end{equation}
Therefore $\varepsilon>0$ remains fixed, while the auxiliary parameter $\delta>0$ is used to approximate and subsequently recover the singular reaction as $\delta\downarrow0$.

\subsection{Auxiliary results}\label{Auxiliary Results}
\begin{lemma} \label{lem:hardy}
Suppose that Assumption~\ref{ass:base} holds. Then there exists a constant $C_H>0$ such that
\begin{equation}\label{hardy-eq}
\left\| \frac{u}{d} \right\|_{p(\cdot)} \leq C_H \| u\|
\end{equation}
for every $u \in X$.
\end{lemma}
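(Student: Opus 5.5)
The plan is to reduce the variable exponent Hardy inequality to the known constant-exponent theory via localization, exploiting the log-Hölder continuity (H2) and the Lipschitz boundary of $\Omega$. This is essentially a known result (e.g.\ Harjulehto--Hästö--Koskenoja, or Diening et al.\ \cite{diening2011lebesgue}, where Hardy-type inequalities $\|u/d\|_{p(\cdot)}\le C\|\nabla u\|_{p(\cdot)}$ are proved for log-Hölder exponents on domains whose complement is uniformly fat). Since $\Omega$ is a bounded Lipschitz domain, $\mathbb R^N\setminus\Omega$ satisfies the uniform exterior cone condition and hence is uniformly $p^-$-fat. The cleanest route is therefore to verify that the hypotheses of that result hold and invoke it. By density of $C_0^\infty(\Omega)$ in $X$ and Fatou's lemma applied to the modular, it suffices to prove \eqref{hardy-eq} for $u\in C_0^\infty(\Omega)$.

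If a self-contained argument is preferred, I would proceed as follows. By homogeneity of the Luxemburg norm, normalize $\|\nabla u\|_{p(\cdot)}=1$, so that $\rho_{p(\cdot)}(\nabla u)\le 1$, and aim to show that $\rho_{p(\cdot)}(u/(Cd))\le 1$ for a uniform $C$. Split $\Omega=\Omega_r\cup(\Omega\setminus\Omega_r)$ with $\Omega_r=\{d<r\}$.
\begin{itemize}
\item On the interior piece, $d\ge r$, so $u/d\le u/r$. The bound then follows from the Poincaré inequality on $X$, already recorded in the functional framework, together with Proposition \ref{Prop:2.2}.
\item For the boundary strip, cover $\partial\Omega$ by finitely many cylinders $Q_j$ of size $r$ in which $\Omega$ is the supergraph of a Lipschitz function, and use a partition of unity $\{\psi_j\}$. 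On each $Q_j$, $d(x)$ is comparable to the vertical distance to the graph, so the one-dimensional Hardy inequality $\int_0^\infty |v/t|^{q}\,dt\le (q/(q-1))^q\int_0^\infty|v'|^q\,dt$ applies along vertical lines.
\end{itemize}
The term $u\nabla\psi_j$ produced by the partition of unity is controlled by $\|u\|_{p(\cdot)}\le C\|\nabla u\|_{p(\cdot)}$, again via Poincaré.

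The main obstacle is the variable exponent: the one-dimensional Hardy inequality does not transfer directly to modulars with $p$ varying along the line. Here I would take $r$ small so that the oscillation of $p$ on each $Q_j$ is small, and compare with the constant exponent $p_j:=\inf_{Q_j}p$. Log-Hölder continuity is exactly what controls the resulting error factors $d^{p_j-p(x)}$ and $|\nabla u|^{p(x)-p_j}$: for arguments bounded by a power of $1/r$, these factors stay bounded by $e^{C_{\log}}$-type constants, which is the standard "local-to-global" key estimate in \cite{diening2011lebesgue}. Remaining small-value contributions are absorbed by an additive term $\int_{Q_j}(e+|x|)^{-s}\,dx$, which is harmless on bounded $\Omega$.

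A genuinely pointwise alternative would use the Hardy--maximal function estimate $|u(x)|/d(x)\le C\,M(|\nabla u|)(x)$, valid for zero-trace $u$ on domains with uniformly fat complement, combined with the boundedness of $M$ on $L^{p(\cdot)}$ under (H2) and $p^->1$. This route avoids the localization bookkeeping, and I expect it to be the one that actually closes most efficiently.
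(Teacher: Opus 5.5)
Your main route is the same as the paper's: check that the bounded Lipschitz domain supplies the required exterior geometric condition and that (H2) is the needed log-H\"older hypothesis, then invoke the variable exponent Hardy inequality of Harjulehto--H\"ast\"o--Koskenoja (the paper uses its Theorem~3.3 with $a=0$ and $\delta=d$). One adjustment: the hypothesis the paper verifies there is the exterior measure-density condition $|B(z,r)\setminus\Omega|\ge b_\Omega|B(z,r)|$ for all $z\in\partial\Omega$ and all $r>0$ (extended from small $r$ via boundedness of $\Omega$), not uniform $p^-$-fatness, though your exterior cone condition yields it just as easily.

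Your pointwise alternative $|u|\le C\,d\,M(|\nabla u|)$, combined with boundedness of $M$ on $L^{p(\cdot)}$, is essentially the mechanism behind that cited result and closes. The localization sketch does not close as written: log-H\"older continuity does not control factors like $t^{p(x)-p_j}$ at pointwise values of $|\nabla u|$ or $u/d$. It would need the averaged form of the key estimate, for instance via the one-dimensional Hardy averaging operator.
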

\begin{proof}
Since $\Omega$ is a bounded Lipschitz domain, it satisfies a uniform local exterior measure-density condition. More precisely, there exist constants $b_0>0$ and $r_0>0$ such that
\[
 |B(z,r)\cap\Omega^c|\geq b_0 |B(z,r)|
\]
for every $z\in\partial\Omega$ and every $0<r\leq r_0$.\\ We first observe that, because $\Omega$ is bounded, this estimate can be extended to all $r>0$. To see this, let $\omega_N:=|B(0,1)|$ and set
\[
 R:=\max\left\{r_0,\left(\frac{2|\Omega|}{\omega_N}\right)^{1/N}\right\}.
\]
For $r_0<r\leq R$, the inclusion $B(z,r_0)\subset B(z,r)$ gives
\[
\begin{aligned}
 |B(z,r)\cap\Omega^c|\geq |B(z,r_0)\cap\Omega^c| \geq b_0\omega_Nr_0^N \geq b_0\left(\frac{r_0}{R}\right)^N |B(z,r)|.
\end{aligned}
\]
On the other hand, if $r\geq R$, then
\[
\begin{aligned}
 |B(z,r)\cap\Omega^c|\geq |B(z,r)|-|\Omega| \geq \frac12 |B(z,r)|.
\end{aligned}
\]
Consequently, with
\[
 b_\Omega:=\min\left\{b_0\left(\frac{r_0}{R}\right)^N,\frac12\right\},
\]
we have
\[
 |B(z,r)\cap\Omega^c|\geq b_\Omega |B(z,r)|
\]
for every $z\in\partial\Omega$ and every $r>0$. Moreover, for $0<|x-y|\leq 1/2$,
\[
 \log\left(e+\frac1{|x-y|}\right)\geq -\log|x-y|,
\]
and hence Assumption~\ref{ass:base} implies
\[
 |p(x)-p(y)|\leq \frac{C_{\log}}{-\log|x-y|}.
\]
Thus $p$ satisfies the log-H\"older continuity condition used in Theorem 3.3 of \cite{HarjulehtoHardy}. Together with \rm(H1), all the hypotheses of Theorem~3.3 are satisfied. Therefore applying Theorem 3.3 with $a=0$ and $\delta(x)=d(x)$ yields \eqref{hardy-eq} for every $u \in X$, where $C_H>0$ depends only on $p$, $N$, and $b_\Omega$.
\end{proof}

\begin{lemma}\label{lem:energy}
Under Assumption \ref{ass:base}, for every $\varepsilon>0$ and for almost every \(x\in\Omega\), $\Phi_\varepsilon(x,\cdot)$ is $C^2$ and strictly convex.
There are constants $C_1,C_2,C_3>0$, depending only on $\varepsilon,p^-,p^+$, such that
\begin{equation}\label{growth-phi}
\frac1{p^+}|\xi|^{p(x)}-C_1\leq\Phi_\varepsilon(x,\xi)\leq C_2(1+|\xi|^{p(x)}),
\end{equation}
and the mapping $\eta \mapsto A_\varepsilon(\cdot,\eta(\cdot))$ induces a Nemytskii operator from $L^{p(\cdot)}(\Omega)$ into $L^{p'(\cdot)}(\Omega)$ with
\begin{equation}\label{growth-flux}
|A_\varepsilon(x,\xi)|\leq C_3(1+|\xi|^{p(x)-1}).
\end{equation}
Consequently $E_\varepsilon$ is coercive, sequentially weakly lower semicontinuous, and continuously G\^ateaux differentiable, with
\[
\langle E_\varepsilon'(u),\phi\rangle=\int_\Omega A_\varepsilon(x,\nabla u)\cdot\nabla\phi dx.
\]
\end{lemma}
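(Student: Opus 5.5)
The plan is to verify each claim of Lemma~\ref{lem:energy} pointwise in $x$ and then transfer it to the functional level through the modular estimates of Proposition~\ref{Prop:2.2} and Proposition~\ref{Prop:2.3}.

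\textbf{Smoothness and convexity.} Since $\varepsilon>0$, the map $\xi\mapsto r_\varepsilon(\xi)$ is smooth and positive on $\mathbb R^N$. Hence $\Phi_\varepsilon(x,\cdot)$ is $C^\infty$, and in particular $C^2$. A direct computation gives
\[
\nabla_\xi^2\Phi_\varepsilon(x,\xi)=r_\varepsilon^{p(x)-2}\Big(I+(p(x)-2)\frac{\xi\otimes\xi}{r_\varepsilon^2}\Big).
\]
Its eigenvalues are $r_\varepsilon^{p-2}$ in the directions orthogonal to $\xi$ and $r_\varepsilon^{p-4}(\varepsilon^2+(p-1)|\xi|^2)$ in the direction of $\xi$. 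Both are positive because $p>1$, so the Hessian is positive definite and $\Phi_\varepsilon(x,\cdot)$ is strictly convex.

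\textbf{Growth bounds.} For the upper bound I will use $r_\varepsilon^{p}\le 2^{p^+}\max(\varepsilon,|\xi|)^{p}$. This gives
\[
\Phi_\varepsilon\le \frac{2^{p^+}}{p^-}\big(\max(\varepsilon^{p^-},\varepsilon^{p^+})+|\xi|^{p(x)}\big).
\]
For the lower bound, $r_\varepsilon\ge|\xi|$ gives $\Phi_\varepsilon\ge(|\xi|^{p}-\varepsilon^{p})/p(x)\ge |\xi|^{p}/p^+ - \varepsilon^{p}/p^-$. This yields \eqref{growth-phi} with $C_1=\max(\varepsilon^{p^-},\varepsilon^{p^+})/p^-$.

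For \eqref{growth-flux}, I split into two cases:
\begin{itemize}
\item if $p(x)\ge2$, then $|A_\varepsilon|\le r_\varepsilon^{p-1}\le 2^{p^+}(\varepsilon^{p-1}+|\xi|^{p-1})$;
\item if $p(x)<2$, then $|A_\varepsilon|=|\xi|/r_\varepsilon^{2-p}\le|\xi|^{p-1}$.
\end{itemize}
Since $(p-1)p'=p$, the bound \eqref{growth-flux} gives $|A_\varepsilon(x,\eta)|^{p'(x)}\le C(1+|\eta|^{p(x)})$. Hence $A_\varepsilon(\cdot,\eta)\in L^{p'(\cdot)}$ whenever $\eta\in L^{p(\cdot)}$, using $|\Omega|<\infty$.

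\textbf{Continuity of the Nemytskii operator.} This is the main obstacle. If $\eta_n\to\eta$ in $L^{p(\cdot)}$, I pass to a subsequence converging a.e. with $|\eta_n|^{p(x)}\le g\in L^1$. The second part is possible by Proposition~\ref{Prop:2.3}(ii) and a standard partial-converse-of-dominated-convergence argument for modular convergence. Then the functions $|A_\varepsilon(x,\eta_n)-A_\varepsilon(x,\eta)|^{p'(x)}$ are dominated by $C(1+g+|\eta|^{p})$ and tend to $0$ a.e. Dominated convergence gives convergence of the $p'$-modular to $0$, hence norm convergence by Proposition~\ref{Prop:2.3}. The subsequence principle then upgrades this to convergence of the full sequence.

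\textbf{Consequences for $E_\varepsilon$.}
\begin{itemize}
\item \emph{Coercivity.} By \eqref{growth-phi} and Proposition~\ref{Prop:2.2}(ii), when $\|u\|>1$ we have $E_\varepsilon(u)\ge \|u\|^{p^-}/p^+ - C_1|\Omega|\to\infty$.
\item \emph{Weak lower semicontinuity.} $E_\varepsilon$ is convex, and it is continuous on $X$ by the upper growth bound combined with the same dominated-convergence argument. A convex continuous functional is weakly lower semicontinuous (Mazur).
\item \emph{G\^ateaux derivative.} For $t\in(0,1]$, the mean value theorem gives
\[
\frac{\Phi_\varepsilon(x,\nabla u+t\nabla\phi)-\Phi_\varepsilon(x,\nabla u)}{t}=A_\varepsilon(x,\nabla u+\theta t\nabla\phi)\cdot\nabla\phi ,
\]
with $\theta\in(0,1)$. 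By \eqref{growth-flux} and Young's inequality (Proposition~\ref{Prop:2.1-a}), this is dominated by $C(1+|\nabla u|^{p}+|\nabla\phi|^{p})$. Dominated convergence then yields the stated formula for $\langle E_\varepsilon'(u),\phi\rangle$.
\item \emph{Continuity of $E_\varepsilon'$.} By H\"older's inequality (Proposition~\ref{Prop:2.1}),
\[
\|E_\varepsilon'(u_n)-E_\varepsilon'(u)\|_*\le C\|A_\varepsilon(\cdot,\nabla u_n)-A_\varepsilon(\cdot,\nabla u)\|_{p'(\cdot)} ,
\]
and the right side tends to $0$ by the Nemytskii continuity established above.
\end{itemize}
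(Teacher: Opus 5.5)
Your argument is correct and follows the same overall architecture as the paper: pointwise Hessian and growth estimates, then transfer to $E_\varepsilon$. Your eigenvalue computation is equivalent to the paper's parallel/perpendicular splitting. Two of the functional-level steps, however, are done differently.

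\emph{Weak lower semicontinuity.} You first prove norm continuity of $E_\varepsilon$ on $X$ and then invoke convexity plus Mazur. The paper instead integrates the supporting-hyperplane inequality $\Phi_\varepsilon(x,\nabla u_n)\ge\Phi_\varepsilon(x,\nabla u)+A_\varepsilon(x,\nabla u)\cdot(\nabla u_n-\nabla u)$. It then uses only $A_\varepsilon(\cdot,\nabla u)\in L^{p'(\cdot)}(\Omega)$ together with $\nabla u_n\rightharpoonup\nabla u$ in $L^{p(\cdot)}(\Omega)$. That route is more direct, since it needs neither strong continuity of $E_\varepsilon$ nor Mazur. The same inequality is also reused in the proof of Theorem~\ref{thm:limit} to turn energy convergence into the $(S_+)$ condition.

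\emph{Continuity of the Nemytskii map.} You extract a subsequence with summable modulars, which gives an a.e.\ limit and an explicit $L^1$ majorant; you then apply dominated convergence and the subsequence principle. The paper instead passes through convergence in measure, uniform integrability of $|\nabla u_n|^{p(\cdot)}$, and Vitali's theorem. These are equivalent standard devices. Yours is slightly more self-contained and also yields the continuity of $E_\varepsilon$ that your Mazur step needs.

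Two small points to tidy. First, take the G\^ateaux limit over $0<|t|\le 1$, or note that the one-sided derivative you compute is linear in $\phi$ and hence two-sided. Second, record that $A_\varepsilon$ is Carath\'eodory (continuous in $\xi$, measurable in $x$ because $p$ is continuous), so that $A_\varepsilon(\cdot,\eta(\cdot))$ is measurable. The paper states this explicitly.
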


\begin{proof}
Fix $x\in\Omega$. Since $\varepsilon>0$, the function $r_\varepsilon(\xi)$ is strictly positive and belongs to $C^\infty(\mathbb{R}^N)$. Thus,
\begin{equation*}
\frac{\partial\Phi_\varepsilon} {\partial\xi_i}(x,\xi) = r_\varepsilon(\xi)^{p(x)-2}\xi_i \quad \text{for each } i\in\{1,\ldots,N\},
\end{equation*}
providing
\begin{equation}\label{grad-a}
\nabla_\xi\Phi_\varepsilon(x,\xi)= r_\varepsilon(\xi)^{p(x)-2}\xi = A_\varepsilon(x,\xi).
\end{equation}
Differentiating once more gives
\begin{equation*}
\frac{\partial}{\partial\xi_j} \left( r_\varepsilon(\xi)^{p(x)-2}\xi_i \right) =(p(x)-2) r_\varepsilon(\xi)^{p(x)-4} \xi_i\xi_j + r_\varepsilon(\xi)^{p(x)-2}\delta_{ij}\quad \text{for } i,j\in\{1,\ldots,N\}.
\end{equation*}
Therefore
\begin{equation}\label{Hessian-a}
D^2_{\xi}\Phi_\varepsilon(x,\xi)=D_{\xi}A_\varepsilon(x,\xi) = r_\varepsilon(\xi)^{p(x)-2}I + \bigl(p(x)-2\bigr) r_\varepsilon(\xi)^{p(x)-4} \xi\otimes\xi,
\end{equation}
which is the Hessian of $\Phi_\varepsilon$, where $\xi\otimes\eta = \left( \xi_i\eta_j \right)_{i,j=1}^N$  is the tensor product for vectors $\xi,\eta\in\mathbb{R}^N$.
Because $r_\varepsilon(\xi)>0$, all the factors appearing in the formula \eqref{Hessian-a} are continuous for every \(\xi\in\mathbb{R}^N\). Hence $\Phi_\varepsilon(x,\cdot)$ is of class \(C^2(\mathbb{R}^N)\).\\
Next, writing the formula \eqref{Hessian-a} in quadratic form gives
\begin{equation}\label{Hessian-aa}
\eta^{\mathsf T} D^2_{\xi}\Phi_\varepsilon(x,\xi)\eta=\eta^{\mathsf T}D_{\xi}A_\varepsilon(x,\xi)\eta = r_\varepsilon(\xi)^{p(x)-2}|\eta|^2 + (p(x)-2) r_\varepsilon(\xi)^{p(x)-4} (\xi\cdot\eta)^2.
\end{equation}
If $\xi\neq0$, decompose $\eta$ into its component parallel to $\xi$ and its component orthogonal to $\xi$:
\[
\eta=\eta_\parallel+\eta_\perp \quad \text{where} \quad \eta_\parallel := \frac{\xi\cdot\eta}{|\xi|^2}\xi \quad \text{and} \quad \eta_\perp := \eta-\eta_\parallel.
\]
Then
\[
\eta_\perp\cdot\xi=0, \quad |\eta|^2 = |\eta_\perp|^2+|\eta_\parallel|^2 \quad \text{and} \quad (\xi\cdot\eta)^2 = |\xi|^2|\eta_\parallel|^2.
\]
Substituting these identities gives

\begin{equation*}
\eta^{\mathsf T} D^2_{\xi}\Phi_\varepsilon(x,\xi)\eta = r_\varepsilon(\xi)^{p(x)-2} |\eta_\perp|^2 + r_\varepsilon(\xi)^{p(x)-2} \left( 1+ (p(x)-2) \frac{|\xi|^2} {\varepsilon^2+|\xi|^2} \right) |\eta_\parallel|^2.
\end{equation*}
Set
\[
\theta_\varepsilon(\xi) := \frac{|\xi|^2} {\varepsilon^2+|\xi|^2}.
\]
Since $\varepsilon>0$, $ 0\leq\theta_\varepsilon(\xi)<1$.
Moreover,
\[
1+(p(x)-2)\theta_\varepsilon(\xi) = 1-\theta_\varepsilon(\xi) +(p(x)-1)\theta_\varepsilon(\xi).
\]
The expression on the right is a convex combination of \(1\) and \(p(x)-1\). Since \(p(x)>1\), both numbers are strictly positive. Therefore,
\[
1+(p(x)-2)\theta_\varepsilon(\xi) \geq \min\{1,p(x)-1\}>0.
\]
It follows that, for every \(\eta\neq0\),
\begin{equation*}
\eta^{\mathsf T} D^2_{\xi}\Phi_\varepsilon(x,\xi)\eta \geq r_\varepsilon(\xi)^{p(x)-2} \min\{1,p(x)-1\} \bigl( |\eta_\perp|^2+|\eta_\parallel|^2 \bigr) = r_\varepsilon(\xi)^{p(x)-2} \min\{1,p(x)-1\} |\eta|^2>0.
\end{equation*}
If $\xi=0$, then
\[
D^2_{\xi}\Phi_\varepsilon(x,0) = \varepsilon^{p(x)-2}I,
\]
and hence
\[
\eta^{\mathsf T} D^2_{\xi}\Phi_\varepsilon(x,0)\eta = \varepsilon^{p(x)-2}|\eta|^2>0
\]
for every $\eta\neq0$. Thus the Hessian of $\Phi_\varepsilon$ is positive definite at every $\xi\in\mathbb{R}^N$. Consequently, $\Phi_\varepsilon(x,\cdot) $ is strictly convex.\\
On the other hand, since $r_\varepsilon(\xi)^{p(x)} \geq |\xi|^{p(x)}$, we have
\begin{equation*}
\Phi_\varepsilon(x,\xi) \geq \frac{|\xi|^{p(x)}}{p(x)} - \frac{\varepsilon^{p(x)}}{p(x)}.
\end{equation*}
Fix a point \(x\in\Omega\) for which $q=p(x)$. Then the function $q\mapsto \frac{\varepsilon^q}{q}$ is continuous on the compact interval $[p^-,p^+]$. Hence
\[
C_1 := \max_{q\in[p^-,p^+]} \frac{\varepsilon^q}{q} <\infty.
\]
It follows that
\begin{equation}\label{lowbnd}
\Phi_\varepsilon(x,\xi) \geq \frac{1}{p^+}|\xi|^{p(x)}-C_1.
\end{equation}
For the upper bound, observe that
\[
\Phi_\varepsilon(x,\xi) \leq \frac{r_\varepsilon(\xi)^{p(x)}}{p(x)} \leq \frac{1}{p^-} \bigl(\varepsilon^2+|\xi|^2\bigr)^{p(x)/2}.
\]
For $a,b\geq0$ and $s>0$, using the elementary inequality
\begin{equation}\label{elem-ineq}
(a+b)^s \le 2^{\max\{s-1,0\}} (a^s+b^s).
\end{equation}
Applying this inequality with $a=\varepsilon^2,\, b=|\xi|^2,\, s=\frac{p(x)}{2}$, there is a constant $C_0>0$ depending only on $p^+$, such that
\[
\bigl(\varepsilon^2+|\xi|^2\bigr)^{p(x)/2} \leq C_0 \left( \varepsilon^{p(x)}+|\xi|^{p(x)} \right).
\]
Furthermore,
\[
\varepsilon^{p(x)} \le \max\{\varepsilon^{p^-},\varepsilon^{p^+}\} =:C_\varepsilon.
\]
Consequently,
\begin{equation}\label{upbnd}
\Phi_\varepsilon(x,\xi) \leq \frac{C_0}{p^-} \left( C_\varepsilon+|\xi|^{p(x)} \right) \leq C_2 \left( 1+|\xi|^{p(x)} \right)
\end{equation}
for a suitable constant $C_2>0$.\\
By definition,
\[
|A_\varepsilon(x,\xi)| = \bigl(\varepsilon^2+|\xi|^2\bigr)^{(p(x)-2)/2} |\xi|.
\]
To show \eqref{growth-flux}, we distinguish two cases:\\
If $|\xi|\leq1$, then $\varepsilon \leq r_\varepsilon(\xi)=r \leq\sqrt{\varepsilon^2+1}$. Since the map $(r,q)\mapsto r^{q-2}$ is continuous on the compact set
$[\varepsilon,\sqrt{\varepsilon^2+1}] \times[p^-,p^+]$, there exists $C_{\varepsilon,p}>0$ such that
\[
r_\varepsilon(\xi)^{p(x)-2} \leq C_{\varepsilon,p}.
\]
Therefore,
\[
|A_\varepsilon(x,\xi)| \leq C_{\varepsilon,p}|\xi| \leq C_{\varepsilon,p}.
\]
If $|\xi|>1$, then
\[
r_\varepsilon(\xi) = \sqrt{\varepsilon^2+|\xi|^2} \leq \sqrt{\varepsilon^2+1}\,|\xi|.
\]
If $p(x)\ge2$, this yields
\begin{equation*}
|A_\varepsilon(x,\xi)| = r_\varepsilon(\xi)^{p(x)-2}|\xi| \leq \bigl(\sqrt{\varepsilon^2+1}\bigr)^{p(x)-2} |\xi|^{p(x)-1} \leq C_{\varepsilon,p} |\xi|^{p(x)-1}.
\end{equation*}
If $1<p(x)<2$, then $p(x)-2<0$, and $r_\varepsilon(\xi)\geq|\xi|$ gives $r_\varepsilon(\xi)^{p(x)-2} \leq |\xi|^{p(x)-2}$. Thus
\[
|A_\varepsilon(x,\xi)| \le |\xi|^{p(x)-1}.
\]
Combining the cases $|\xi|\leq 1$ and $|\xi|>1$, for some $C_3>0$, we obtain \eqref{growth-flux}; that is,
\[
|A_\varepsilon(x,\xi)| \le C_3 \left( 1+|\xi|^{p(x)-1} \right).
\]
Therefore $A_\varepsilon$ satisfies the Krasnoselskii growth criteria. The map $\xi \mapsto (\varepsilon^2 + |\xi|^2)^{(p(x)-2)/2}$ is the composition of a continuous positive function with the continuous power function $t \mapsto t^{(p(x)-2)/2}$. Hence it is continuous. The product of a continuous scalar function and a continuous vector function is continuous. Therefore $\xi \mapsto (\varepsilon^2 + |\xi|^2)^{(p(x)-2)/2}\xi$ is continuous on $\mathbb{R}^{N}$ for almost every $x$. Since the variable exponent $p$ is measurable, so is $x \mapsto \frac{p(x)-2}{2}$. Hence for an arbitrary fixed vector $\xi \in \mathbb{R}^{N}$, rewriting the flux  $A_\varepsilon$ using the exponential-logarithm identity shows that $x \mapsto (\varepsilon^2 + |\xi|^2)^{(p(x)-2)/2}\xi$ is measurable on $\Omega$ for every fixed $\xi \in \mathbb{R}^N$. In conclusion, the mapping $\eta \mapsto A_\varepsilon(\cdot,\eta(\cdot))$ induces a Nemytskii operator from $L^{p(\cdot)}(\Omega$ into $L^{p'(\cdot)}(\Omega)$.\\
Now we proceed with the coercivity and sequential weak lower semicontinuity of $E_\varepsilon$.\\
Let $u\in X$ with $\|u\|>1$. Using the lower growth estimate of $\Phi_\varepsilon$ along with the modular--norm inequalities in Proposition \ref{Prop:2.2} gives
\begin{equation*}
E_\varepsilon(u) = \int_\Omega \Phi_\varepsilon(x,\nabla u)dx \geq \frac{1}{p^+} \int_\Omega |\nabla u|^{p(x)}dx - C_1|\Omega| = \frac{1}{p^+} \|u\|^{p^-} - C_1|\Omega|.
\end{equation*}
Therefore, $\|u\| \rightarrow\infty$ implies $E_\varepsilon(u)\rightarrow\infty$. Hence $E_\varepsilon$ is coercive.\\
Let $u_n\in X$ for every $n$ with $u_n\rightharpoonup u \,\, \text{in }X$. Then $\nabla u_n\rightharpoonup\nabla u \,\,\text{in }L^{p(\cdot)}(\Omega)$.
Applying the supporting-hyperplane inequality for convex functions and using \eqref{grad-a} gives
\begin{equation*}
\Phi_\varepsilon(x,\nabla u_n) \geq \Phi_\varepsilon(x,\nabla u) + A_\varepsilon(x,\nabla u) \cdot (\nabla u_n-\nabla u)
\end{equation*}
for almost every $x$. Integrating gives
\begin{equation*}
E_\varepsilon(u_n) \geq E_\varepsilon(u) + \int_\Omega A_\varepsilon(x,\nabla u) \cdot (\nabla u_n-\nabla u)dx.
\end{equation*}
Since $A_\varepsilon(\cdot,\nabla u(\cdot)) \in L^{p'(\cdot)}(\Omega)$, the weak convergence of $(\nabla u_n)$ in $L^{p(\cdot)}(\Omega)$ gives
\[
\int_\Omega A_\varepsilon(x,\nabla u) \cdot (\nabla u_n-\nabla u)dx \rightarrow0.
\]
Lastly, taking the lower limit proves
\[
E_\varepsilon(u) \le \liminf_{n\to\infty}E_\varepsilon(u_n).
\]
Fix $u,\phi\in X$. For $t\neq0$,
\begin{equation}\label{eq-gat}
\frac{E_\varepsilon(u+t\phi)- E_\varepsilon(u)}{t} = \int_\Omega \frac{\Phi_\varepsilon (x,\nabla u+t\nabla\phi) - \Phi_\varepsilon(x,\nabla u)}{t} dx.
\end{equation}
Considering \eqref{grad-a} and applying the fundamental theorem of calculus in the $\xi$ variable yields
\[
\frac{ \Phi_\varepsilon (x,\nabla u+t\nabla\phi) - \Phi_\varepsilon(x,\nabla u) }{t}= \int_0^1 A_\varepsilon (x,\nabla u+s t\nabla\phi) \cdot\nabla\phi ds.
\]
By continuity of $A_\varepsilon(x,\cdot)$
\[
A_\varepsilon (x,\nabla u+s t\nabla\phi) \rightarrow A_\varepsilon(x,\nabla u)
\]
as $t\to 0$, uniformly with respect to $s\in[0,1]$. For $|t|\leq1$ and $s\in[0,1]$, \eqref{growth-flux} gives
\[
\left| A_\varepsilon (x,\nabla u+s t\nabla\phi) \cdot\nabla\phi \right| \leq C \left( 1+ |\nabla u+s t\nabla\phi|^{p(x)-1} \right) |\nabla\phi|
\leq C \left( 1+ (|\nabla u|+|\nabla\phi|)^{p(x)-1} \right) |\nabla\phi|.
\]
Using the uniform power inequality,
\[
(a+b)^{p(x)-1} \leq C \left( 1+a^{p(x)-1}+b^{p(x)-1} \right),
\]
where $C>0$ is some constant not depending on $a, b$, we find
\[
\left| A_\varepsilon (x,\nabla u+s t\nabla\phi) \cdot\nabla\phi \right| \leq C \left[ |\nabla\phi| + |\nabla u|^{p(x)-1}|\nabla\phi| + |\nabla\phi|^{p(x)} \right].
\]
Since the right-hand side is integrable due to the standard arguments, i.e. obtained by employing the relevant embeddings, and Propositions \ref{Prop:2.1-a}, \ref{Prop:2.2}, the Dominated Convergence Theorem (DCT)  yields
\[
\lim_{t\to0} \frac{E_\varepsilon(u+t\phi)- E_\varepsilon(u)}{t} = \int_\Omega A_\varepsilon(x,\nabla u) \cdot\nabla\phi dx.
\]
Hence $E_\varepsilon$ is Gâteaux differentiable with
\[
\langle E_\varepsilon'(u),\phi\rangle = \int_\Omega A_\varepsilon(x,\nabla u) \cdot\nabla\phi dx.
\]
Suppose now that $ u_n\rightarrow u \quad\text{in }X$.  Then
\[
\nabla u_n\rightarrow\nabla u \quad\text{in } L^{p(\cdot)}(\Omega),
\]
and hence
\[
\nabla u_n\rightarrow\nabla u \quad\text{in measure in}\,\Omega.
\]
Since $A_\varepsilon$ is a Carathéodory function (as shown above), it reads
\[
A_\varepsilon(x, \nabla u_n) \rightarrow A_\varepsilon(x, \nabla u)\, \text{ in measure in }\Omega.
\]
Strong convergence in $L^{p(\cdot)}(\Omega)$ implies modular convergence and uniform integrability of the family
$\bigl\{ |\nabla u_n|^{p(\cdot)} \bigr\}_{n}$.  Therefore by \eqref{growth-flux} and the fact that $\Omega$ is bounded, the family $\bigl\{ |A_\varepsilon(x,\nabla u_n)|^{p'(\cdot)} \bigr\}_{n}$ is uniformly integrable. Thus, by  Vitali's theorem (see, e.g. \cite{Bogachev}), we obtain
\begin{equation}\label{lem2.2-catedory}
A_\varepsilon(x,\nabla u_n) \rightarrow A_\varepsilon(x,\nabla u)\,\text{ in } L^{p'(\cdot)}(\Omega).
\end{equation}
On the other hand, for any $\phi\in X$, Hölder's inequality gives
\[
\begin{aligned}
\left| \langle E_\varepsilon'(u_n) - E_\varepsilon'(u),\phi\rangle \right| &= \left| \int_\Omega \left[ A_\varepsilon(x,\nabla u_n) - A_\varepsilon(x,\nabla u) \right] \cdot\nabla\phi dx \right| \\ &\leq 2 \left\| A_\varepsilon(\cdot,\nabla u_n) - A_\varepsilon(\cdot,\nabla u) \right\|_{p'(\cdot)} \|\phi\|.
\end{aligned}
\]
Taking the supremum over all $\phi$ satisfying $\|\phi\|\leq1$ yields
\[
\|E_\varepsilon'(u_n)-E_\varepsilon'(u)\|_ {*} \leq 2 \left\| A_\varepsilon(\cdot,\nabla u_n) - A_\varepsilon(\cdot,\nabla u) \right\|_{p'(\cdot)} \rightarrow 0,
\]
which implies that $E_\varepsilon'$ is continuous. Therefore $E_\varepsilon$ is continuously Gâteaux differentiable.
\end{proof}

\begin{lemma}\label{lem:quantitative-monotonicity-flux}
There exists a constant $c_0:=c_0(p^-,p^+)>0$ such that, for any $x\in\Omega$, $\xi,\zeta\in\mathbb{R}^N$ fixed the following inequalities hold:
\begin{equation} \label{lema3.2-e}
\bigl( A_\varepsilon(x,\xi) - A_\varepsilon(x,\zeta) \bigr) \cdot(\xi-\zeta) \geq c_0|\xi-\zeta|^{p(x)} \quad\text{if }p(x)\geq2,
\end{equation}
and
\begin{equation} \label{lema3.2-f}
\bigl( A_\varepsilon(x,\xi) - A_\varepsilon(x,\zeta) \bigr) \cdot(\xi-\zeta)\geq c_0 |\xi-\zeta|^2 \bigl( \varepsilon+|\xi|+|\zeta| \bigr)^{p(x)-2} \quad\text{if }1<p(x)<2.
\end{equation}
In particular,
\[
\left( A_\varepsilon(x,\xi) - A_\varepsilon(x,\zeta) \right) \cdot(\xi-\zeta) > 0
\]
whenever $\xi\neq\zeta$. Thus $A_\varepsilon(x,\cdot)$ is strictly monotone.
\end{lemma}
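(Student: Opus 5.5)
The plan is to reduce both inequalities to the pointwise Hessian bound already obtained in the proof of Lemma~\ref{lem:energy}. Fix $x\in\Omega$ and $\xi,\zeta\in\mathbb{R}^N$, set $\eta:=\xi-\zeta$ and $\xi_t:=\zeta+t\eta$ for $t\in[0,1]$. Since $A_\varepsilon(x,\cdot)$ is $C^1$ (it is the gradient of the $C^2$ function $\Phi_\varepsilon(x,\cdot)$), the fundamental theorem of calculus gives
\[
\bigl(A_\varepsilon(x,\xi)-A_\varepsilon(x,\zeta)\bigr)\cdot\eta=\int_0^1 \eta^{\mathsf T} D_\xi A_\varepsilon(x,\xi_t)\,\eta\,dt .
\]
Inserting the lower bound from the proof of Lemma~\ref{lem:energy},
\[
\eta^{\mathsf T}D_\xi A_\varepsilon(x,\xi_t)\eta\ge \min\{1,p(x)-1\}\,r_\varepsilon(\xi_t)^{p(x)-2}|\eta|^2 ,
\]
and using $\min\{1,p(x)-1\}\ge \min\{1,p^--1\}$, I obtain
\[
\bigl(A_\varepsilon(x,\xi)-A_\varepsilon(x,\zeta)\bigr)\cdot\eta\ge \min\{1,p^--1\}\,|\eta|^2\int_0^1 r_\varepsilon(\xi_t)^{p(x)-2}\,dt .
\]
All that remains is to bound the last integral from below in the two regimes, with constants depending only on $p^-,p^+$ and not on $\varepsilon$.

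\emph{Case $1<p(x)<2$.} The exponent $p(x)-2$ is negative, so an upper bound on $r_\varepsilon(\xi_t)$ suffices. Since
\[
r_\varepsilon(\xi_t)\le \varepsilon+|\xi_t|\le \varepsilon+|\xi|+|\zeta|,
\]
we get $r_\varepsilon(\xi_t)^{p(x)-2}\ge(\varepsilon+|\xi|+|\zeta|)^{p(x)-2}$ for every $t$. This yields \eqref{lema3.2-f} with constant $\min\{1,p^--1\}$.

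\emph{Case $p(x)\ge2$.} Now the exponent is nonnegative, so I use $r_\varepsilon(\xi_t)\ge|\xi_t|$ and reduce to the classical estimate
\[
\int_0^1|\zeta+t\eta|^{p(x)-2}\,dt\ge c\,|\eta|^{p(x)-2}.
\]
To prove it, note that $t\mapsto\xi_t$ parametrizes a segment with constant speed $|\eta|$. The set $\{t\in[0,1]:|\xi_t|<|\eta|/4\}$ is the preimage of a ball of diameter $|\eta|/2$, hence an interval of length at most $1/2$. On the complementary set, which has measure at least $1/2$, we have $|\xi_t|^{p(x)-2}\ge 4^{-(p(x)-2)}|\eta|^{p(x)-2}$. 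Therefore the integral is at least $\tfrac12 4^{-(p^+-2)}|\eta|^{p(x)-2}$. This gives \eqref{lema3.2-e} with constant $\tfrac12\,4^{-(p^+-2)}$, where $\min\{1,p(x)-1\}=1$ in this regime.

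Taking $c_0$ as the minimum of the constants from the two cases gives a constant depending only on $p^-,p^+$. Strict monotonicity follows at once: if $\xi\neq\zeta$, the right-hand sides of \eqref{lema3.2-e} and \eqref{lema3.2-f} are strictly positive, because $\varepsilon+|\xi|+|\zeta|>0$. The main obstacle is the $p\ge2$ lower bound on the integral, which must hold uniformly in $p(x)$ and independently of $\varepsilon$; the segment-measure argument above handles this without any case distinction on the relative sizes of $|\xi|$ and $|\zeta|$.
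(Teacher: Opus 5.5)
Your proof is correct and follows the paper's structure. You use the segment identity for $\bigl(A_\varepsilon(x,\xi)-A_\varepsilon(x,\zeta)\bigr)\cdot(\xi-\zeta)$ and a Hessian lower bound of the form $\min\{1,p(x)-1\}\,r_\varepsilon^{p(x)-2}|\xi-\zeta|^2$; the paper drops the nonnegative term when $p(x)\ge2$ and uses tangential and radial eigenvalues when $1<p(x)<2$. You also use the estimate $r_\varepsilon(\xi_t)\le\varepsilon+|\xi|+|\zeta|$ when $1<p(x)<2$.

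The only real difference is the case $p(x)\ge2$. The paper splits $\zeta$ into components parallel and orthogonal to $\xi-\zeta$ and reduces to $\int_0^1|a+t|^{p(x)-2}\,dt$. It then asserts this is minimized at $a=-\tfrac12$, which gives the sharper constant $2^{2-p^+}/(p^+-1)$. Your sublevel-set measure argument gives the cruder constant $\tfrac12\,4^{-(p^+-2)}$, but it is self-contained and avoids that unproved minimization step.
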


\begin{proof}
In the following discussions, we fix a point $x\in\Omega$ for which \rm(H1) holds. Define
\[
\theta(t) :=(1-t)\zeta+t\xi= \zeta+t\vartheta, \quad \vartheta:=\xi-\zeta, \quad 0\leq t\leq1, \quad \zeta,\xi \in\mathbb R^N.
\]
Then $\theta(0)=\zeta$ and $\theta(1)=\xi$. Then
\begin{align}\label{eq:quantitative-monotonicity-segment}
A_{\varepsilon}(x,\xi) - A_{\varepsilon}(x,\zeta) = \int_0^1 \frac{ d}{dt} A_{\varepsilon}(x,\theta(t)) dt = \int_0^1 D_\xi A_{\varepsilon}(x,\theta(t)) \vartheta  dt.
\end{align}
Taking the Euclidean inner product with $\vartheta$, we obtain
\begin{align}\label{eq:quantitative-monotonicity-segment-form}
\left( A_{\varepsilon}(x,\xi) - A_{\varepsilon}(x,\zeta) \right) \cdot \vartheta = \int_0^1 \vartheta^\top D_\xi A_{\varepsilon}(x,\theta(t)) \vartheta dt.
\end{align}
This identity reduces the monotonicity estimate to a lower bound for the Hessian $D_\xi A_{\varepsilon}$ along the line segment joining $\zeta$ and $\xi$. Thus, we will use the quadratic formula \eqref{Hessian-aa}. The behavior of the second term in the quadratic formula \eqref{Hessian-aa} differs according to whether $p(x)\geq2$) or $1<p(x)<2$. We therefore treat these two cases separately.\\

\noindent
\textit{Case $p(x)\geq 2$.} In this case, we can write
\begin{equation} \label{eq:superquadratic-Hessian-lower}
\vartheta^\top D_\xi A_{\varepsilon,}(x,\theta(t)) \vartheta \geq r_\varepsilon(\theta(t))^{p(x)-2}|\vartheta|^2.
\end{equation}
Since
\[
r_\varepsilon(\theta(t)) = \left( \varepsilon^2+|\theta(t)|^2 \right)^{1/2} \geq |\theta(t)|,
\]
and $p(x)-2\geq0$, we also have
\[
r_\varepsilon(\theta(t))^{p(x)-2} \geq |\theta(t)|^{p(x)-2}.
\]
Therefore,
\begin{equation}\label{eq:superquadratic-Hessian-unregularized-lower}
\vartheta^\top D_\xi A_{\varepsilon}(x,\theta(t)) \vartheta \geq |\theta(t)|^{p(x)-2}|\theta(t)|^2.
\end{equation}
Applying this in \eqref{eq:quantitative-monotonicity-segment-form} with $\theta(t)=\zeta+t\vartheta$ we obtain
\begin{align}\label{eq:superquadratic-segment-integral}
\left( A_{\varepsilon}(x,\xi) - A_{\varepsilon}(x,\zeta) \right) \cdot \vartheta \geq |\vartheta|^2 \int_0^1 |\zeta+t\vartheta|^{p(x)-2} dt.
\end{align}
Notice that if $\vartheta=0$,  then $\xi=\zeta$, and the desired inequality is immediate. Hence assume $\vartheta\neq0$.  Let $e=\frac{\vartheta}{|\vartheta|}$. We decompose $\zeta$ into components parallel and perpendicular to $e$
\[
\zeta = \zeta_\parallel+\zeta_\perp,
\]
where
\[
\zeta_\parallel = (\zeta\cdot e)e \quad \text{and} \quad \zeta_\perp\cdot e=0.
\]
Write $a = \frac{\zeta\cdot e}{|\vartheta|}$. Then
\[
\zeta_\parallel+t\vartheta = |\vartheta|(a+t)e.
\]
Since the parallel and perpendicular components are orthogonal,
\begin{align*}
|\zeta+t\vartheta|^2 = |\zeta_\perp|^2 + |\vartheta|^2|a+t|^2 \geq |\vartheta|^2|a+t|^2.
\end{align*}
Therefore,
\[
|\zeta+t\vartheta| \geq |\vartheta||a+t|.
\]
On the other hand considering that  $p(x)-2\geq 0$, we have
\[
|\zeta+t\vartheta|^{p(x)-2} \geq |\vartheta|^{p(x)-2}|a+t|^{p(x)-2}.
\]
Consequently,
\begin{equation}\label{eq:superquadratic-segment-scalar}
\int_0^1 |\zeta+t\vartheta|^{p(x)-2} dt \geq |\vartheta|^{p(x)-2} \int_0^1 |a+t|^{p(x)-2} dt.
\end{equation}
The scalar integral
\[
a\mapsto \int_0^1 |a+t|^{p(x)-2}  dt
\]
is minimized when the interval $[a,a+1]$ is centered at the origin, namely when $a=-\frac12$. Hence
\begin{align}\label{eq:superquadratic-scalar-lower}
\int_0^1 |a+t|^{p(x)-2} dt &\geq \int_0^1 \left| t-\frac12 \right|^{p(x)-2}  dt = 2 \int_0^{1/2} t^{p(x)-2} dt = \frac{2^{2-p(x)}}{p(x)-1}.
\end{align}
Combining \eqref{eq:superquadratic-segment-scalar} and \eqref{eq:superquadratic-scalar-lower}, we find
\begin{equation}\label{eq:superquadratic-segment-final}
\int_0^1 |\zeta+td|^{p(x)-2} dt \geq \frac{2^{2-p(x)}}{p(x)-1} |\vartheta|^{p(x)-2}.
\end{equation}
Substituting \eqref{eq:superquadratic-segment-final} and $ \vartheta=\xi-\zeta$ into \eqref{eq:superquadratic-segment-integral} yields
\begin{equation}\label{eq:superquadratic-monotonicity-explicit}
\left( A_{\varepsilon}(x,\xi) - A_{\varepsilon}(x,\zeta) \right) \cdot(\xi-\zeta) \geq \frac{2^{2-p(x)}}{p(x)-1} |\xi-\zeta|^{p(x)}\geq \frac{2^{2-p^+}}{p^+-1} |\xi-\zeta|^{p(x)}>0.
\end{equation}

\noindent
\textit{Case $1<p(x)<2$.}  In this case the second term in \eqref{Hessian-aa} is nonpositive, so it cannot simply be discarded. We instead use the regularized tangential and radial eigenvalues of the Hessian $D_\xi A_{\varepsilon}$ to obtain a strictly positive bound. The regularized tangential and radial eigenvalues are
\begin{equation}\label{eq:subquadratic-tangential-eigenvalue}
\mu_\perp(\theta(t)) = r_\varepsilon(\theta(t))^{p(x)-2}
\end{equation}
and
\begin{equation}\label{eq:subquadratic-radial-eigenvalue}
\mu_\parallel(\theta(t)) = r_\varepsilon(\theta(t))^{p(x)-4} \left[ \varepsilon^2+(p(x)-1)|\theta(t)|^2 \right],
\end{equation}
respectively. Therefore,
\begin{align*}
\varepsilon^2+(p(x)-1)|\theta(t)|^2 \geq (p(x)-1)\varepsilon^2 + (p(x)-1)|\theta(t)|^2 = (p(x)-1) \left( \varepsilon^2+|\theta(t)|^2 \right).
\end{align*}
It follows that
\begin{align}\label{eq:subquadratic-radial-lower}
\mu_\parallel(\theta(t)) \geq (p(x)-1) r_\varepsilon(\theta(t))^{p(x)-4} \left( \varepsilon^2+|\theta(t)|^2 \right) = (p(x)-1) r_\varepsilon(\theta(t))^{p(x)-2},
\end{align}
and
\[
\mu_\perp(\theta(t)) = r_\varepsilon(\theta(t))^{p(x)-2} \geq (p(x)-1) r_\varepsilon(\theta(t))^{p(x)-2}.
\]
Thus every eigenvalue of $D_\xi A_{\varepsilon}$ is bounded below by $(p(x)-1)r_\varepsilon(\theta(t))^{p(x)-2}$. Consequently,
\begin{equation}\label{eq:subquadratic-Hessian-lower}
\vartheta^\top D_\xi A_{\varepsilon}(x,\theta(t)) \vartheta \geq (p(x)-1) r_\varepsilon(\theta(t))^{p(x)-2} |\vartheta|^2.
\end{equation}
Next, we bound the regularized radius $r_\varepsilon$ along the segment $0\leq t\leq1$. Therefore,
\begin{align*}
0<r_\varepsilon(\theta(t)) = \left( \varepsilon^2+|\theta(t)|^2 \right)^{1/2} \leq \varepsilon+|\theta(t)| \leq \varepsilon+|\xi|+|\zeta|.
\end{align*}
However, since $p(x)-2<0$, we obtain
\begin{equation}\label{eq:subquadratic-radius-lower}
r_\varepsilon(\theta(t))^{p(x)-2} \geq \left( \varepsilon+|\xi|+|\zeta| \right)^{p(x)-2}.
\end{equation}
Using \eqref{eq:subquadratic-radius-lower} in \eqref{eq:subquadratic-Hessian-lower} we obtain \begin{align}\label{eq:subquadratic-segment-lower}
\vartheta^\top D_\xi A_{\varepsilon}(x,\theta(t)) \vartheta\geq (p(x)-1) r_\varepsilon(\theta(t))^{p(x)-2} |\vartheta|^2 \geq (p(x)-1) \left( \varepsilon+|\xi|+|\zeta| \right)^{p(x)-2} |\vartheta|^2.
\end{align}
Integrating over \(t\in[0,1]\) and using \eqref{eq:quantitative-monotonicity-segment-form}, we find
\begin{align}\label{eq:subquadratic-monotonicity-fixed-q}
\left( A_{\varepsilon}(x,\xi) - A_{\varepsilon}(x,\zeta) \right) \cdot \vartheta &\geq  (p(x)-1) |\vartheta|^2 \left( \varepsilon+|\xi|+|\zeta| \right)^{p(x)-2} \int_0^1dt \notag \\&= (p(x)-1) |\vartheta|^2 \left( \varepsilon+|\xi|+|\zeta| \right)^{p(x)-2}.
\end{align}
Therefore,
\begin{align}\label{eq:subquadratic-monotonicity-uniform}
\left( A_\varepsilon(x,\xi) - A_\varepsilon(x,\zeta) \right) \cdot(\xi-\zeta) \geq (p^--1) |\xi-\zeta|^2 \left( \varepsilon+|\xi|+|\zeta| \right)^{p(x)-2},
\end{align}
which is \eqref{lema3.2-f}. Define
\begin{equation}\label{eq:monotonicity-common-constant}
c_0 := \min \left\{ p^--1, \frac{2^{2-p^+}}{p^+-1} \right\}.
\end{equation}
Then both  inequalities \eqref{lema3.2-e} and \eqref{lema3.2-f} hold with same constant $c_0=c_0(p^-,p^+)>0$.\\
\noindent
Lastly, if $\xi\neq\zeta$, then for both cases $1<p(x)<2$ and $p(x)\geq2$, it holds
\[
\left( A_\varepsilon(x,\xi) - A_\varepsilon(x,\zeta) \right) \cdot(\xi-\zeta) >0.
\]
Hence $A_\varepsilon(x,\cdot)$ is strictly monotone for every $x\in\Omega$. The proof is complete.
\end{proof}

\begin{lemma}\label{lem:primitive}
There is $C_{sing}>0$, independent of $u\in\mathcal M_m^+$ and $\delta\in(0,1]$, such that
\begin{equation}\label{sing-integ}
0\leq\int_\Omega h(x)G_\delta(x,u)dx\leq C_{sing}.
\end{equation}
\end{lemma}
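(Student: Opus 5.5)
Since $u\ge0$, $h\ge0$ and $s\mapsto G_\delta(x,s)$ is nondecreasing with $G_\delta(x,0)=0$, we have $G_\delta(x,u)\ge0$ a.e. This gives the lower bound at once. For the upper bound, the idea is to make the constraint $J(u)=m$, rather than any Sobolev control, do the work. The subadditivity of $t\mapsto t^{1-\gamma(x)}$, which holds because $0<1-\gamma(x)<1$, gives $(s+\delta)^{1-\gamma(x)}\le s^{1-\gamma(x)}+\delta^{1-\gamma(x)}$. Hence
\[
0\le G_\delta(x,s)\le \frac{s^{1-\gamma(x)}}{1-\gamma(x)}\le \frac{s^{1-\gamma(x)}}{1-\gamma^+},\qquad s\ge0,
\]
uniformly in $\delta\in(0,1]$, by (H4). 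It therefore suffices to bound $\int_\Omega h\,u^{1-\gamma(x)}dx$ uniformly over $\mathcal M_m^+$.

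The next step is a pointwise Young-type estimate. Since $1-\gamma(x)<1<p(x)$, we have $s^{1-\gamma(x)}\le 1+s^{p(x)}$ for every $s\ge0$: if $s\le1$ the left side is at most $1$, and if $s>1$ it is at most $s^{p(x)}$. Using this together with $\|h\|_\infty$ from (H3) and $u^{p(x)}\le p^+\,u^{p(x)}/p(x)$, we get
\[
\int_\Omega h(x)G_\delta(x,u)\,dx\le \frac{\|h\|_\infty}{1-\gamma^+}\Bigl(|\Omega|+\int_\Omega u^{p(x)}dx\Bigr)\le \frac{\|h\|_\infty}{1-\gamma^+}\bigl(|\Omega|+p^+m\bigr)=:C_{sing}.
\]
This constant depends only on $\Omega,h,\gamma^+,p^+,m$ and not on $u$ or $\delta$. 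The measurability of $x\mapsto G_\delta(x,u(x))$ follows from continuity of $\gamma$ and measurability of $u$.

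No step here is a real obstacle. The only points needing care are checking that the subadditivity inequality holds uniformly in $x$, which is where $\gamma^+<1$ is used, and noticing that the modular constraint alone controls the integral, so that no Sobolev embedding or Hardy inequality is needed.
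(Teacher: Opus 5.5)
Your proposal is correct and follows the paper's proof essentially step by step. It uses nonnegativity of $G_\delta$ from monotonicity of $t\mapsto t^{1-\gamma(x)}$, the subadditivity bound $G_\delta(x,s)\le s^{1-\gamma(x)}/(1-\gamma^+)$, the pointwise estimate $s^{1-\gamma(x)}\le 1+s^{p(x)}$, and $\rho_{p(\cdot)}(u)\le p^+m$, and it arrives at the same constant $C_{sing}=\frac{\|h\|_\infty}{1-\gamma^+}\bigl(|\Omega|+p^+m\bigr)$. One minor correction: subadditivity holds pointwise for any exponent in $(0,1]$, so the hypothesis $\gamma^+<1$ is actually used for the uniform denominator bound $1/(1-\gamma(x))\le 1/(1-\gamma^+)$, not for subadditivity.
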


\begin{proof}
Throughout the proof, we let $u\in\mathcal{M}_m^+\, \text{ and }\, 0<\delta\leq1$ be arbitrary.\\
For a fixed $x\in\Omega$ and $s\geq0$, $s+\delta\geq\delta>0$ and the function $t\mapsto t^{1-\gamma(x)}$ is increasing on $[0,\infty)$. Therefore we have
\[
(s+\delta)^{1-\gamma(x)} \ge \delta^{1-\gamma(x)},
\]
which implies that $G_\delta(x,s)\geq0$ for every $s\geq0$ and almost every \(x\in\Omega\). Because $h\geq0$ and $u\geq0$  almost everywhere, it follows that
\[
h(x)G_\delta(x,u(x))\geq0 \quad\text{for a.e. }x\in\Omega.
\]
Hence
\[
\int_\Omega h(x)G_\delta(x,u(x))dx\ge0.
\]
Fix $x\in\Omega$, and set $a(x):=1-\gamma(x)$, and hence, $0<a(x)<1$.  For every exponent $a\in(0,1)$, the function $t\mapsto t^a$ is concave and subadditive on $[0,\infty)$. In particular,
\[
(r+s)^a\leq r^a+s^a\quad \forall r,s\geq0.
\]
Applying this inequality with $r=\delta, \, a=1-\gamma(x)$, we obtain
\[
(s+\delta)^{1-\gamma(x)} - \delta^{1-\gamma(x)} \leq s^{1-\gamma(x)}.
\]
Consequently, by $\rm(H4)$, we have
\[
0\leq G_\delta(x,s) \leq \frac{s^{1-\gamma(x)}}{1-\gamma^+}
\]
for every $s\ge0$, almost every $x\in\Omega$, and every $0<\delta\leq1$. In particular, substituting $s=u(x)$ gives
\begin{equation}\label{G-a}
0\leq G_\delta(x,u) \leq \frac{u^{1-\gamma(x)}}{1-\gamma^+} \quad\text{for a.e. }x\in\Omega.
\end{equation}
Next, we decompose $\Omega$ into the measurable sets
\[
\Omega_0:=\{x\in\Omega:0\leq u(x)\leq1\} \quad \text{and} \quad \Omega_1:=\{x\in\Omega:u(x)>1\}.
\]
If $x\in\Omega_0$, then
\begin{equation}\label{omega-a}
u(x)^{1-\gamma(x)} \leq 1+u(x)^{p(x)}.
\end{equation}
If $x\in\Omega_1$, then
\begin{equation}\label{omega-b}
u(x)^{1-\gamma(x)} \le u(x)^{p(x)} \leq 1+u(x)^{p(x)}.
\end{equation}
Combining \eqref{omega-a} and \eqref{omega-b}, we obtain the global pointwise inequality
\[
u(x)^{1-\gamma(x)} \leq 1+u(x)^{p(x)} \quad\text{for a.e. }x\in\Omega,
\]
from which one may write
\[
\begin{aligned}
\int_\Omega u(x)^{1-\gamma(x)}dx &= \int_{\Omega_0} u(x)^{1-\gamma(x)}dx + \int_{\Omega_1} u(x)^{1-\gamma(x)}dx \leq |\Omega_0| + \int_{\Omega_1}u(x)^{p(x)}dx \\ &\leq |\Omega| + \int_\Omega u(x)^{p(x)}dx.
\end{aligned}
\]
On the other hand, since $u\in\mathcal{M}_m^+$, we can write
\[
\rho_{p(\cdot)}(u) \leq p^+ \int_\Omega \frac{u(x)^{p(x)}}{p(x)} dx = p^+m.
\]
Consequently,
\begin{equation}\label{upest-aa}
\int_\Omega u(x)^{1-\gamma(x)}dx \leq |\Omega| + \int_\Omega u(x)^{p(x)}dx  \leq |\Omega|+p^+m.
\end{equation}
Thus the lower variable power $u^{1-\gamma(\cdot)}$ is integrable, and its integral is bounded uniformly over all $u\in\mathcal{M}_m^+$. Now using $\rm(H3)$ and the pointwise estimate \eqref{G-a}, we obtain
\[
\int_\Omega h(x)G_\delta(x,u(x))dx \leq \|h\|_{\infty} \int_\Omega G_\delta(x,u(x))dx \leq \frac{\|h\|_{\infty}}{1-\gamma^+} \int_\Omega u(x)^{1-\gamma(x)}dx.
\]
Lastly, applying the estimate \eqref{upest-aa} gives
\[
\int_\Omega h(x)G_\delta(x,u(x))dx \leq \frac{\|h\|_{\infty}}{1-\gamma^+} \left( |\Omega|+p^+m \right):=C_{sing}<\infty,
\]
for every $u\in\mathcal{M}_m^+$ and every $0<\delta\leq1$. This proves the lemma.
\end{proof}

\begin{corollary}\label{cor:coercive}
For $\mu$ in a bounded interval, the family $I_{\mu,\delta}$ is uniformly bounded below and equi-coercive on $\mathcal M_m^+$ for $0<\delta\leq1$.
\end{corollary}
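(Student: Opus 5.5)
We combine the lower growth bound \eqref{growth-phi} from Lemma~\ref{lem:energy} with the uniform bound \eqref{sing-integ} on the singular primitive from Lemma~\ref{lem:primitive}. Fix a bounded interval containing $\mu$, say $0\le\mu\le M$. For $u\in\mathcal M_m^+$ and $0<\delta\le1$, the definition \eqref{normal-modular-functional} gives
\[
I_{\mu,\delta}(u)\ \ge\ E_\varepsilon(u)-\mu\int_\Omega h(x)G_\delta(x,u)\,dx\ \ge\ E_\varepsilon(u)-M\,C_{sing}.
\]
This holds because $h\,G_\delta(\cdot,u)\ge0$ and its integral is at most $C_{sing}$, a constant independent of $u\in\mathcal M_m^+$ and $\delta\in(0,1]$. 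If one allows negative $\mu$ in a bounded interval, the term $-\mu\int hG_\delta$ is nonnegative and the same bound $-\sup|\mu|\,C_{sing}$ still holds.

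Next, integrating \eqref{growth-phi} over $\Omega$ gives
\[
E_\varepsilon(u)\ \ge\ \frac1{p^+}\rho_{p(\cdot)}(\nabla u)-C_1|\Omega|\ \ge\ -C_1|\Omega|.
\]
Hence $I_{\mu,\delta}\ge -C_1|\Omega|-M C_{sing}$ on $\mathcal M_m^+$, uniformly in $\delta\in(0,1]$ and $\mu\in[0,M]$. This is the uniform lower bound.

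For equi-coercivity we need a single function $\psi$ with $\psi(t)\to\infty$ as $t\to\infty$ and $I_{\mu,\delta}(u)\ge\psi(\|u\|)$ for all $u\in\mathcal M_m^+$ and all admissible $\mu,\delta$. When $\|u\|>1$, Proposition~\ref{Prop:2.2}(ii) gives $\rho_{p(\cdot)}(\nabla u)\ge\|u\|^{p^-}$, so
\[
I_{\mu,\delta}(u)\ \ge\ \frac1{p^+}\|u\|^{p^-}-C_1|\Omega|-M C_{sing}=:\psi(\|u\|).
\]
The constants $C_1$ and $C_{sing}$ depend only on $\varepsilon,p^\pm,\Omega,h,\gamma,m$, not on $\delta$ or $u$. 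Consequently every sublevel set $\{u\in\mathcal M_m^+: I_{\mu,\delta}(u)\le c\}$ lies in a ball of $X$ whose radius depends only on $c$, uniformly in $\delta$ and $\mu$.

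The step that needs care is the uniformity of $C_{sing}$. In the proof of Lemma~\ref{lem:primitive}, the constraint $J(u)=m$ is what controls $\int u^{1-\gamma(x)}$ independently of $\|u\|$. No balancing of the sublinear term against the gradient energy is therefore needed, and the rest is bookkeeping of constants.
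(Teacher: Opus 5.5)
Your proposal is correct and follows the paper's proof essentially step for step: the uniform bound $C_{sing}$ from Lemma~\ref{lem:primitive} absorbs the singular primitive, and the lower growth bound \eqref{growth-phi} then gives $I_{\mu,\delta}(u)\ge \frac{1}{p^+}\rho_{p(\cdot)}(\nabla u)-C_1|\Omega|-\mu_0 C_{sing}$, which Proposition~\ref{Prop:2.2}(ii) converts into a uniform bound on $\|u\|$. The only cosmetic difference is that you state equi-coercivity as a single coercive minorant $\psi(\|u\|)$ for $\|u\|>1$, whereas the paper states it as boundedness of the union of sublevel sets over $\mu\in[0,\mu_0]$ and $\delta\in(0,1]$; the two formulations are equivalent here.
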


\begin{proof}
Let $0\leq \mu\leq \mu_0 \, \text{ and }\, 0<\delta\leq1$, where $\mu_0<\infty$ is fixed. We prove that the family
\[
\left\{ I_{\mu,\delta}: 0\leq\mu\leq\mu_0,\; 0<\delta\leq1 \right\}
\]
is uniformly bounded from below and equi-coercive on $\mathcal{M}_m^+$.\\
Multiplying \eqref{sing-integ} by $-\mu$ gives
\[
-\mu_0C_{\mathrm{sing}} \leq -\mu \int_\Omega h(x)G_\delta(x,u(x))dx \leq 0.
\]
Consequently,
\begin{equation}\label{funct-low-a}
I_{\mu,\delta}(u) = E_\varepsilon(u) - \mu \int_\Omega h(x)G_\delta(x,u(x))\,dx \geq E_\varepsilon(u) - \mu_0C_{\mathrm{sing}}.
\end{equation}
Using \eqref{lowbnd} gives
\begin{equation}\label{funct-low-b}
I_{\mu,\delta}(u) \geq E_\varepsilon(u) - \mu_0C_{\mathrm{sing}} \geq \frac{1}{p^+} \rho_{p(\cdot)}(\nabla u) - C_0,
\end{equation}
where $C_0 := C_1|\Omega| + \mu_0C_{\mathrm{sing}}$. From \eqref{funct-low-b}, one can obtain
\begin{equation}\label{funct-low-c}
\inf_{\substack{ 0\leq\mu\leq\mu_0\\ 0<\delta\leq1\\ u\in\mathcal{M}_m^+ }} I_{\mu,\delta}(u) \geq -C_0>-\infty.
\end{equation}
Thus the family $I_{\mu,\delta}$ is uniformly bounded from below on $\mathcal{M}_m^+$.\\
Next, we show the uniform boundedness of common sublevel sets. To do so, let $L\in\mathbb{R}$ be fixed, and consider the union of the $L$-sublevel sets
\[
\mathcal{S}_L := \bigcup_{\substack{ 0\leq\mu\leq\mu_0\\ 0<\delta\leq1 }} \left\{ u\in\mathcal{M}_m^+: I_{\mu,\delta}(u)\leq L \right\}.
\]
We show that $\mathcal{S}_L$ is bounded in $X$. Let $u\in\mathcal{S}_L$. By definition, there exist $\mu\in[0,\mu_0] \, \text{ and }\, \delta\in(0,1]$ such that
\[
I_{\mu,\delta}(u)\leq L.
\]
Using this and \eqref{funct-low-b}, we find
\begin{equation}\label{norm-bound}
0\leq \rho_{p(\cdot)}(\nabla u) \leq p^+(L+C_0):=K_L.
\end{equation}
If $\| u\|\leq1$, then the gradient norm is already bounded, thus assume that $\|u\|>1$. Then using \eqref{norm-bound} along with the norm-modular relations given in Proposition \ref{Prop:2.2}, we have
\[
\|u\|^{p^-} \leq \rho_{p(\cdot)}(\nabla u) \leq K_L,
\]
and hence
\begin{equation}
\|u\|\leq K_L^{1/p^-}.
\end{equation}
The right-hand side is independent of the particular choices of $u$, $\mu$, and $\delta$. Consequently, every common sublevel set $\mathcal{S}_L$ is bounded in $X$. Thus the family
\[
\left\{ I_{\mu,\delta}: 0\leq\mu\leq\mu_0,\; 0<\delta\le1 \right\}
\]
is equi-coercive on $\mathcal{M}_m^+$.
\end{proof}

\begin{lemma}\label{lem:constraint}
$J: L^{p(\cdot)}(\Omega)\rightarrow \mathbb{R}$, and $J':L^{p(\cdot)}(\Omega) \rightarrow \bigl(L^{p(\cdot)}(\Omega)\bigr)^* $ are continuous. Moreover,
\begin{equation}\label{deriv-J}
\langle J'(u),\phi\rangle=\int_\Omega |u|^{p(x)-2}u\phi dx,
\end{equation}
and $J'(u)\neq0$ on $\mathcal M_m$.
\end{lemma}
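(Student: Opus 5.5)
We prove the three assertions in turn: continuity of $J$, Gâteaux differentiability with formula \eqref{deriv-J} together with continuity of $J'$, and nonvanishing of $J'$ on $\mathcal M_m$. Throughout, $u_n\to u$ in $L^{p(\cdot)}(\Omega)$.

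\textbf{Continuity of $J$.} By Proposition~\ref{Prop:2.3}, strong convergence in $L^{p(\cdot)}(\Omega)$ gives $u_n\to u$ in measure and $\rho_{p(\cdot)}(u_n)\to\rho_{p(\cdot)}(u)$. Modular convergence of $\rho_{p(\cdot)}(u_n-u)\to0$ makes the family $\{|u_n|^{p(\cdot)}\}$ uniformly integrable, via the pointwise bound
\[
|u_n|^{p(x)}\le 2^{p^+-1}\bigl(|u_n-u|^{p(x)}+|u|^{p(x)}\bigr).
\]
Since $1/p(x)\le 1/p^-$, Vitali's theorem yields $J(u_n)\to J(u)$.

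\textbf{Gâteaux differentiability and formula \eqref{deriv-J}.} Fix $u,\phi\in L^{p(\cdot)}(\Omega)$ and write $\Psi(x,s)=|s|^{p(x)}/p(x)$, so that $\partial_s\Psi=|s|^{p(x)-2}s$. For $0<|t|\le1$ the mean value theorem gives
\[
\frac{\Psi(x,u+t\phi)-\Psi(x,u)}{t}=|u+\tau t\phi|^{p(x)-2}(u+\tau t\phi)\,\phi ,\qquad \tau\in(0,1).
\]
This is dominated by $C\bigl(|u|^{p(x)-1}+|\phi|^{p(x)-1}\bigr)|\phi|$. Young's inequality (Proposition~\ref{Prop:2.1-a}) bounds the dominating function by $C\bigl(|u|^{p(x)}+|\phi|^{p(x)}\bigr)$, which is integrable. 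Dominated convergence then gives \eqref{deriv-J}. By Hölder's inequality (Proposition~\ref{Prop:2.1}), $J'(u)$ is a bounded linear functional, since $|u|^{p(\cdot)-2}u\in L^{p'(\cdot)}(\Omega)$. Indeed, $\bigl||u|^{p-2}u\bigr|^{p'}=|u|^{p}$.

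\textbf{Continuity of $J'$.} This step is the only real point and repeats the Vitali argument from Lemma~\ref{lem:energy}. The Nemytskii map $s\mapsto|s|^{p(x)-2}s$ is Carathéodory, so $|u_n|^{p-2}u_n\to|u|^{p-2}u$ in measure. The functions
\[
\bigl||u_n|^{p-2}u_n-|u|^{p-2}u\bigr|^{p'(x)}\le C\bigl(|u_n|^{p(x)}+|u|^{p(x)}\bigr)
\]
are uniformly integrable by the previous step. Hence $\rho_{p'(\cdot)}\bigl(|u_n|^{p-2}u_n-|u|^{p-2}u\bigr)\to0$, which means norm convergence in $L^{p'(\cdot)}(\Omega)$ by Proposition~\ref{Prop:2.3}. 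Hölder's inequality then gives $\|J'(u_n)-J'(u)\|_{(L^{p(\cdot)})^*}\to0$. The same conclusions hold for $J$ restricted to $X$, because $X\hookrightarrow L^{p(\cdot)}(\Omega)$ continuously.

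\textbf{Nonvanishing on $\mathcal M_m$.} Test with $\phi=u\in X$:
\[
\langle J'(u),u\rangle=\int_\Omega|u|^{p(x)}\,dx=\rho_{p(\cdot)}(u)\ \ge\ p^-\int_\Omega\frac{|u|^{p(x)}}{p(x)}\,dx=p^-m>0 .
\]
So $J'(u)\neq0$ for every $u\in\mathcal M_m$, and in particular $\mathcal M_m$ is a regular level set.
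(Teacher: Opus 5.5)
Your proof is correct and follows the paper's route: the nonvanishing step is identical (testing with $\phi=u$ to get $\langle J'(u),u\rangle=\rho_{p(\cdot)}(u)\ge p^-m>0$), and for the continuity of $J$, $J'$ and formula \eqref{deriv-J} the paper only cites standard arguments (Fan--Zhang). Your dominated-convergence and Vitali arguments supply exactly those omitted details, in the same way Lemma~\ref{lem:energy} treats the flux $A_\varepsilon$.
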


\begin{proof}
The continuity of $J$ and $J'$ and the formula \eqref{deriv-J} can be easily shown by using the standard arguments (see, e.g. \cite{FanZhang}). In particular, because the embedding $X \hookrightarrow L^{p(\cdot)}(\Omega)$ is continuous, the same results are also valid on $X$ and $X^{*}$. Let $u\in\mathcal{M}_m$. By definition, $J(u)=m>0$ if $u\neq0$. Therefore using the derivative formula \eqref{deriv-J} for $u\neq0$ gives
\[
\langle J'(u),u\rangle =\rho_{p(\cdot)}(u)\geq p^-m >0.
\]
Thus \(J'(u)\neq0\) on the prescribed-modular constraint$\mathcal{M}_m$.
\end{proof}

\begin{lemma}\label{thm:approx}
Under Assumption \ref{ass:base}, $I_{\mu,\delta}$ attains its minimum on $\mathcal M_m^+$ for every $\mu\geq0$ and $0<\delta\leq1$.
\end{lemma}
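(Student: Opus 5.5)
We apply the direct method of the calculus of variations on the constraint set $\mathcal M_m^+$. Fix $\mu\geq0$ and $0<\delta\leq1$, and take $\mu_0:=\mu$ in Corollary~\ref{cor:coercive}.

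\textbf{Step 1: the constraint set is nonempty.} Pick any $v\in X_+\setminus\{0\}$, for instance $|\varphi|$ with $\varphi\in C_0^\infty(\Omega)$, $\varphi\not\equiv0$. The map $t\mapsto J(tv)$ on $[0,\infty)$ has the following properties:
\begin{itemize}
\item it is continuous by dominated convergence;
\item it is strictly increasing;
\item it vanishes at $t=0$;
\item since $J(tv)\geq t^{p^-}J(v)$ for $t\geq1$, it tends to infinity as $t\to\infty$.
\end{itemize}
By the intermediate value theorem there is $t_m>0$ with $t_mv\in\mathcal M_m^+$. Consequently
\[
\ell:=\inf_{\mathcal M_m^+}I_{\mu,\delta}
\]
is finite. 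Indeed, it is bounded above by $I_{\mu,\delta}(t_mv)<\infty$ (finite by \eqref{growth-phi} and Lemma~\ref{lem:primitive}), and bounded below by $-C_0$ from \eqref{funct-low-c}.

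\textbf{Step 2: a bounded minimizing sequence and its limit.} Take a minimizing sequence $(u_n)\subset\mathcal M_m^+$. It eventually lies in a sublevel set $\mathcal S_{\ell+1}$, so by Corollary~\ref{cor:coercive} it is bounded in $X$. By reflexivity (Proposition~\ref{Prop:2.4}) and the compact embedding $X\hookrightarrow L^{p(\cdot)}(\Omega)$ (Proposition~\ref{Prop:2.6} with $q=p<p^*$), a subsequence satisfies
\[
u_n\rightharpoonup u \text{ in } X,\qquad u_n\to u \text{ in } L^{p(\cdot)}(\Omega),\qquad u_n\to u \text{ a.e. in } \Omega.
\]
Since $X_+$ is convex and closed, hence weakly closed, or directly from a.e.\ convergence, we get $u\geq0$ a.e. By Lemma~\ref{lem:constraint}, $J$ is continuous on $L^{p(\cdot)}(\Omega)$, so $J(u)=\lim J(u_n)=m$. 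Hence $u\in\mathcal M_m^+$; in particular $u\neq0$.

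This step is the crucial one: the modular constraint is not weakly closed in $X$. We recover it only through strong convergence in $L^{p(\cdot)}(\Omega)$, which relies on the compact embedding.

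\textbf{Step 3: lower semicontinuity.} Lemma~\ref{lem:energy} gives
\[
E_\varepsilon(u)\leq\liminf_{n\to\infty}E_\varepsilon(u_n).
\]
For the reaction term we show
\[
\int_\Omega hG_\delta(x,u_n)\,dx\to\int_\Omega hG_\delta(x,u)\,dx.
\]
Pointwise convergence follows from continuity of $s\mapsto G_\delta(x,s)$. For domination, use \eqref{G-a} together with $u_n^{1-\gamma(x)}\leq1+u_n^{p(x)}$. Since $u_n^{p(\cdot)}\to u^{p(\cdot)}$ in $L^1$ (modular convergence, Proposition~\ref{Prop:2.3}), the family $\{hG_\delta(\cdot,u_n)\}$ is uniformly integrable. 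Vitali's theorem, or the generalized dominated convergence theorem, then gives the convergence. Combining the two facts,
\[
I_{\mu,\delta}(u)\leq\liminf_{n\to\infty}I_{\mu,\delta}(u_n)=\ell.
\]
Since $u\in\mathcal M_m^+$, the reverse inequality $I_{\mu,\delta}(u)\geq\ell$ also holds, so $u$ is a minimizer.

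An alternative for the reaction term, if uniform integrability were awkward, is to note that $G_\delta(x,\cdot)$ is Lipschitz with constant $\delta^{-\gamma(x)}\leq\delta^{-1}$. This yields
\[
\Bigl|\int_\Omega h\bigl(G_\delta(x,u_n)-G_\delta(x,u)\bigr)\,dx\Bigr|\leq\|h\|_\infty\,\delta^{-1}\,\|u_n-u\|_{L^1(\Omega)}\to0,
\]
using $L^{p(\cdot)}(\Omega)\hookrightarrow L^1(\Omega)$ (Proposition~\ref{Prop:2.5}). This version is simpler because $\delta$ is fixed.
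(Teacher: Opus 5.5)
Your proof is correct and follows essentially the same direct-method argument as the paper: nonemptiness of $\mathcal M_m^+$ via the scaling map $t\mapsto J(t\psi)$ and the intermediate value theorem, boundedness of a minimizing sequence from Corollary~\ref{cor:coercive}, $u\in\mathcal M_m^+$ from strong $L^{p(\cdot)}(\Omega)$ convergence and continuity of $J$, and weak lower semicontinuity of $E_\varepsilon$ together with continuity of the reaction term. Your ``alternative'' Lipschitz estimate for $\int_\Omega hG_\delta(x,\cdot)\,dx$ is exactly the paper's route (the paper uses the sharper constant $\delta^{-\gamma^+}$), and your Vitali/uniform-integrability argument is an equally valid variant.
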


\begin{proof}
Fix $\mu\geq0 \, \text{ and }\,0<\delta\leq1$. Choose a function $\psi\in C_c^\infty(\Omega)$ such that $\psi\geq0 \,\text{ and }\, \psi\not\equiv0$. For $t\geq0$, define
\[
H(t):=J(t\psi) = \int_\Omega \frac{t^{p(x)}|\psi(x)|^{p(x)}}{p(x)} dx.
\]
We show that there exists $t_m>0$ such that $H(t_m)=m$. Then, $H$ is continuous on $[0,\infty)$. Indeed, let $t_n\to t$. Then, for almost every $x\in\Omega$,
\[
t_n^{p(x)}|\psi(x)|^{p(x)} \rightarrow t^{p(x)}|\psi(x)|^{p(x)}.
\]
Since $(t_n)$ is bounded, there exists $T>0$ such that $ 0\leq t_n\leq T$ for every sufficiently large $n$. Consequently,
\[
t_n^{p(x)} \le \max\{T^{p^-},T^{p^+}\}
\]
for almost every \(x\in\Omega\). Therefore,
\[
\frac{t_n^{p(x)}|\psi(x)|^{p(x)}}{p(x)} \leq \frac{\max\{T^{p^-},T^{p^+}\}}{p^-} |\psi(x)|^{p(x)}.
\]
Because \(\psi\in C_c^\infty(\Omega)\), the function on the right-hand side belongs to $L^1(\Omega)$. Thus the DCT yields $H(t_n)\rightarrow H(t)$. Moreover, $H$ is strictly increasing on $(0,\infty)$. Indeed, if $0\leq t_1<t_2$, then $t_1^{p(x)}<t_2^{p(x)}$ for every $x$. Since the set $\{x\in\Omega:\psi(x)\neq0\}$ has positive measure, it follows that $ H(t_1)<H(t_2)$. Finally, $H(t)\to\infty$ as $t\to\infty$. To verify this, take $t\geq1$. Considering \eqref{p} and $\psi\not\equiv0$, we have
\[
H(t) = \int_\Omega \frac{t^{p(x)}|\psi(x)|^{p(x)}}{p(x)}dx \geq \frac{t^{p^-}}{p^+} \int_\Omega |\psi(x)|^{p(x)}dx,
\]
and therefore, $H(t)\rightarrow\infty \,\text{ as } t\rightarrow\infty.$
Since $H$ is continuous, and
\[
H(0)=0<m, \quad \lim_{t\to\infty}H(t)=\infty,
\]
the Intermediate Value Theorem gives a number $t_m>0$ such that $H(t_m)=m$. Then the function $v_m:=t_m\psi$ satisfies
\[
v_m\in X, \qquad v_m\geq0, \quad J(v_m)=m.
\]
Thus $v_m\in\mathcal{M}_m^+$, and hence $\mathcal{M}_m^+\neq\varnothing$.\\
Define
\[
c_{\varepsilon,m,\mu,\delta} := \inf_{u\in\mathcal{M}_m^+} I_{\mu,\delta}(u).
\]
Because \(\mathcal{M}_m^+\neq\varnothing\), the test function $v_m$ defined above gives
\[
c_{\varepsilon,m,\mu,\delta} \leq I_{\mu,\delta}(v_m).
\]
The quantity on the right is finite. Indeed, $E_\varepsilon(v_m)<\infty$ because $v_m\in X$, and Lemma~\ref{lem:primitive} gives
\[
0 \leq \int_\Omega h(x)G_\delta(x,v_m(x))dx \leq C_{\mathrm{sing}}<\infty.
\]
Therefore,
\[
c_{\varepsilon,m,\mu,\delta}<\infty.
\]
On the other hand, Corollary~\ref{cor:coercive}, applied with any fixed upper bound $\mu_0\geq\mu$, gives a constant $C_0>0$ such that
\[
I_{\mu,\delta}(u)\geq-C_0
\]
for every $u\in\mathcal{M}_m^+$. Hence
\[
c_{\varepsilon,m,\mu,\delta}\geq-C_0>-\infty.
\]
Consequently,
\[
-\infty < c_{\varepsilon,m,\mu,\delta} < \infty.
\]
By the definition of the infimum, there exists a sequence $(u_n)\subset\mathcal{M}_m^+$ such that
\[
I_{\mu,\delta}(u_n) \rightarrow c_{\varepsilon,m,\mu,\delta} \quad\text{as }n\to\infty.
\]
After discarding finitely many terms, we may assume that
\[
I_{\mu,\delta}(u_n) \le c_{\varepsilon,m,\mu,\delta}+1
\]
for every large $n$. Let
\[
L:=c_{\varepsilon,m,\mu,\delta}+1.
\]
Then each \(u_n\) belongs to the $L$-sublevel set
\[
\left\{ u\in\mathcal{M}_m^+: I_{\mu,\delta}(u)\leq L \right\}.
\]
By the equi-coercivity established in Corollary~\ref{cor:coercive}, this sublevel set is bounded in $X$. Thus there exists a constant $C_L>0$ such that
\[
\|u_n\| \leq C_L
\]
for every $n$. Indeed, using Lemma~\ref{lem:primitive}, $I_{\mu,\delta}(u_n)\leq L$, and  the lower growth estimate for $E_\varepsilon$ yields
\[
\rho_{p(\cdot)}(\nabla u_n) \leq p^+ \left( L+\mu C_{\mathrm{sing}}+C_1|\Omega| \right).
\]
The modular--norm inequalities then imply the boundedness of $(u_n)$ in $X$. Since $(u_n)$ is bounded in $X$, there exist a subsequence, not relabelled, and a function $u\in X$ such that
\begin{flalign}
&u_n\rightharpoonup u \text{ in } X, \label{eq:weaku} &\\
& u_n\rightarrow u \text{ in } L^{p(\cdot)}(\Omega),\label{eq:strongu}&\\
&\nabla u_n\rightharpoonup\nabla u \text{ in } L^{p(\cdot)}(\Omega),\label{eq:weakgrad}&\\
&u_n(x) \rightarrow u(x) \text{ for a.e. } x\in\Omega,\label{eq:aeu}.&
\end{flalign}
For every $n$, $u_n\in\mathcal{M}_m^+$, and hence, $u(x)\geq0$ for a.e. $x\in\Omega$ and $J(u_n)=m$. By Lemma~\ref{lem:constraint}, $ J(u_n)\rightarrow J(u)$, hence $J(u)=m.$ Together with
\[
u\in X \quad\text{and}\quad u\geq0\,\,\text{for a.e. }x\in\Omega,
\]
this proves that $u\in\mathcal{M}_m^+$. Thus the set $\mathcal{M}_m^+$ is closed.\\
Define
\begin{equation}\label{int-G}
\mathcal{G}_\delta(v) := \int_\Omega h(x)G_\delta(x,v)\,dx
\end{equation}
for nonnegative $v\in L^{p(\cdot)}(\Omega)$. We prove that $\mathcal{G}_\delta(u_n) \rightarrow \mathcal{G}_\delta(u)$. For almost every fixed $x\in\Omega$, the derivative of $G_\delta(x,s)$ is
\[
\frac{\partial G_\delta}{\partial s}(x,s) = (s+\delta)^{-\gamma(x)},\,\, s\geq0.
\]
Furthermore, because $0<\delta\leq1$ and $\gamma(x)\leq\gamma^+$,\quad $\delta^{-\gamma(x)} \leq \delta^{-\gamma^+}$.
Thus
\[
0< \frac{\partial G_\delta}{\partial s}(x,s) \leq \delta^{-\gamma^+}
\]
for all $s\geq0$ and almost every $x\in\Omega$. Applying the Mean Value Theorem, for any $a,b\geq0$, there exists a number $\zeta$ between $a$ and $b$ such that
\[
|G_\delta(x,a)-G_\delta(x,b)| = \left| \frac{\partial G_\delta}{\partial s}(x,\zeta) \right| |a-b| \leq \delta^{-\gamma^+}|a-b|.
\]
Therefore, using the Hölder inequality and the continuous embedding $L^{p(\cdot)}(\Omega) \hookrightarrow L^{1}(\Omega)$ gives
\[
\left| \mathcal{G}_\delta(u_n) - \mathcal{G}_\delta(u) \right| \leq \int_\Omega |h(x)| \left| G_\delta(x,u_n(x)) - G_\delta(x,u(x)) \right| dx \\
\leq 2\delta^{-\gamma^+} \|h\|_{\infty}  \|1\|_{p'(\cdot)}\|u_n-u\|_{p(\cdot)}
\]
which, consequently, implies that $\mathcal{G}_\delta(u_n) \rightarrow \mathcal{G}_\delta(u)$. Notice that the Lipschitz constant $\delta^{-\gamma^+}$ depends on the fixed approximation parameter $\delta$. This is sufficient here because the present theorem treats a fixed $\delta>0$. No estimate uniform as $\delta\downarrow0$ is required in this step.\\
By Lemma~\ref{lem:energy}, the regularized energy $E_\varepsilon$ is sequentially weakly lower semicontinuous. Since$ u_n\rightharpoonup u \,\text{in }X$, we have
\[
E_\varepsilon(u) \leq \liminf_{n\to\infty} E_\varepsilon(u_n).
\]
Thus, considering also the continuity of $\mathcal{G}_\delta$ it reads
\begin{equation}\label{low-semi-cont}
I_{\mu,\delta}(u) \leq \liminf_{n\to\infty} \left( E_\varepsilon(u_n) - \mu\mathcal{G}_\delta(u_n) \right) = \liminf_{n\to\infty} I_{\mu,\delta}(u_n).
\end{equation}
Since $(u_n)$ is a minimizing sequence of $I_{\mu,\delta}$,
\[
I_{\mu,\delta}(u_n) \rightarrow c_{\varepsilon,m,\mu,\delta}.
\]
Therefore, using \eqref{low-semi-cont} it reads
\[
I_{\mu,\delta}(u) \leq c_{\varepsilon,m,\mu,\delta}.
\]
On the other hand, since $ u\in\mathcal{M}_m^+$, the definition of the infimum gives
\[
c_{\varepsilon,m,\mu,\delta} \leq I_{\mu,\delta}(u).
\]
Combining the last two inequalities yields
\[
I_{\mu,\delta}(u) = c_{\varepsilon,m,\mu,\delta}.
\]
Hence $u$ is a minimizer of $I_{\mu,\delta}$ on $\mathcal{M}_m^+$; that is, for every $\mu\ge0$ and $0<\delta\leq1$, there exists $ u_{\varepsilon,m,\mu,\delta} \in\mathcal{M}_m^+$ such that
\[
I_{\mu,\delta} \bigl( u_{\varepsilon,m,\mu,\delta} \bigr) = \inf_{v\in\mathcal{M}_m^+} I_{\mu,\delta}(v).
\]
This completes the proof.
\end{proof}

In Lemma \ref{prop:euler} below, we construct the approximate  normalized weak solution pairs $(u_\delta,\lambda_\delta) \in \mathcal M_m^+ \times \mathbb{R}$ used in Theorem \ref{thm:limit}.
\begin{lemma}\label{prop:euler}
Let $u_\delta$ minimize $I_{\mu,\delta}$ on $\mathcal M_m^+$. Suppose signed constrained variations are admissible at $u_\delta$. Then there is a unique Lagrange multiplier $\lambda_\delta\in\mathbb{R}$ such that the Euler–Lagrange identity
\begin{equation}\label{Euler–Lagrange-identity}
\int_\Omega A_\varepsilon(x,\nabla u_\delta)\cdot\nabla\phi dx=\lambda_\delta\int_\Omega u_\delta^{p(x)-1}\phi dx+\mu\int_\Omega h(u_\delta+\delta)^{-\gamma(x)}\phi dx
\end{equation}
holds for every $\phi\in X$.
\end{lemma}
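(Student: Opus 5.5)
The argument is the standard Lagrange multiplier rule on the $C^1$ manifold $\mathcal M_m$, applied at $u_\delta$. Here the hypothesis ``signed constrained variations are admissible'' is interpreted in the natural way. For every direction $\phi\in X$ tangent to $\mathcal M_m$ at $u_\delta$, meaning $\langle J'(u_\delta),\phi\rangle=0$, there is a $C^1$ curve $c:(-t_0,t_0)\to\mathcal M_m^+$ with $c(0)=u_\delta$ and $c'(0)=\phi$. Equivalently, $u_\delta$ is a local minimizer of $I_{\mu,\delta}$ on $\mathcal M_m$ with respect to such two-sided curves.

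\emph{First step: regularity of the approximate functional.} I would show that $I_{\mu,\delta}$ is continuously G\^ateaux differentiable on $X$, with
\[
\langle I_{\mu,\delta}'(u),\phi\rangle=\int_\Omega A_\varepsilon(x,\nabla u)\cdot\nabla\phi\,dx-\mu\int_\Omega h(x)(u^++\delta)^{-\gamma(x)}\phi\,dx .
\]
To make this meaningful, I extend $G_\delta(x,s)$ to $s<0$ by $G_\delta(x,s):=G_\delta(x,s^+)$ or by an odd $C^1$ extension; only the values at $u_\delta\ge0$ matter. The energy part is handled by Lemma~\ref{lem:energy}. For the singular part, $0<\partial_sG_\delta\le\delta^{-\gamma^+}$ and $h\in L^\infty$, so the mean value theorem together with dominated convergence yields differentiability. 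Continuity of the derivative follows from the continuity of $s\mapsto(s^++\delta)^{-\gamma(x)}$, its uniform bound, and Vitali's theorem, exactly as in the proof of Lemma~\ref{thm:approx}. By Lemma~\ref{lem:constraint}, $J$ is $C^1$ on $X$.

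\emph{Second step: the multiplier rule.} Take any $\phi$ with $\langle J'(u_\delta),\phi\rangle=0$ and its admissible curve $c$. The function $t\mapsto I_{\mu,\delta}(c(t))$ has a minimum at $t=0$ on a two-sided interval, so its derivative $\langle I_{\mu,\delta}'(u_\delta),\phi\rangle$ vanishes. Thus the linear functional $I_{\mu,\delta}'(u_\delta)\in X^*$ vanishes on $\ker J'(u_\delta)$. By Lemma~\ref{lem:constraint},
\[
\langle J'(u_\delta),u_\delta\rangle=\rho_{p(\cdot)}(u_\delta)\ge p^-m>0 .
\]
For general $\phi\in X$, decompose
\[
\phi=\Bigl(\phi-\tfrac{\langle J'(u_\delta),\phi\rangle}{\rho_{p(\cdot)}(u_\delta)}u_\delta\Bigr)+\tfrac{\langle J'(u_\delta),\phi\rangle}{\rho_{p(\cdot)}(u_\delta)}u_\delta .
\]
The first bracket lies in the kernel. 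Hence $I_{\mu,\delta}'(u_\delta)=\lambda_\delta J'(u_\delta)$ with
\[
\lambda_\delta:=\frac{\langle I_{\mu,\delta}'(u_\delta),u_\delta\rangle}{\rho_{p(\cdot)}(u_\delta)}
=\frac{\int_\Omega A_\varepsilon(x,\nabla u_\delta)\cdot\nabla u_\delta\,dx-\mu\int_\Omega h(u_\delta+\delta)^{-\gamma(x)}u_\delta\,dx}{\rho_{p(\cdot)}(u_\delta)} .
\]
Since $u_\delta\ge0$, we have $|u_\delta|^{p(x)-2}u_\delta=u_\delta^{p(x)-1}$, and writing out $I_{\mu,\delta}'(u_\delta)=\lambda_\delta J'(u_\delta)$ gives exactly \eqref{Euler–Lagrange-identity} for all $\phi\in X$.

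\emph{Third step: uniqueness.} If two multipliers $\lambda,\lambda'$ both satisfy the identity, subtracting the two identities gives $(\lambda-\lambda')\langle J'(u_\delta),\phi\rangle=0$ for all $\phi$. Testing with $\phi=u_\delta$ gives $(\lambda-\lambda')\rho_{p(\cdot)}(u_\delta)=0$, and since $\rho_{p(\cdot)}(u_\delta)>0$, $\lambda=\lambda'$.

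The main obstacle is the second step. The nonnegativity constraint means $u_\delta$ is a minimizer on $\mathcal M_m^+$, not on $\mathcal M_m$, so two-sided variations $\pm\phi$ are not automatically available. The admissibility hypothesis is precisely what supplies them. If one wishes to make it concrete, I would first try the curve
\[
c(t)=s(t)\,(u_\delta+t\phi),
\]
where $s(t)>0$ is chosen by the intermediate value and implicit function arguments (as with $H$ in Lemma~\ref{thm:approx}) so that $J(c(t))=m$. The point to verify there is that $c(t)$ stays in $X_+$ for small $|t|$, and that condition is exactly what the hypothesis encodes.
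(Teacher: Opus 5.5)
Your proposal is correct and is essentially the paper's argument. Two-sided admissible curves give $\ker J'(u_\delta)\subset\ker I_{\mu,\delta}'(u_\delta)$, the decomposition with $\psi=u_\delta$ produces $\lambda_\delta$, and testing with $u_\delta$ gives uniqueness. The only difference is in the curve: you take the admissible $C^1$ curve in $\mathcal M_m^+$ with velocity $\phi$ as the content of the hypothesis, whereas the paper first builds a curve in $\mathcal M_m$ by the implicit function theorem and then invokes the hypothesis to make it admissible.

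One minor slip: the extension $G_\delta(x,s^+)$ is not $C^1$ at $s=0$, since its $s$-derivative jumps from $0$ to $\delta^{-\gamma(x)}$. Use the odd extension, or the linear one $\delta^{-\gamma(x)}s$ for $s<0$; the latter is what your formula with $(u^++\delta)^{-\gamma(x)}$ corresponds to. Alternatively, note that along a curve in $X_+$ no extension is needed at all.
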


\begin{proof}
Let let $u_\delta = u_{\varepsilon,m,\mu,\delta} \in\mathcal{M}_m^+$ be a minimizer of $I_{\mu,\delta}$ on $\mathcal{M}_m^+$. Thus
\[
u_\delta\geq0 \quad\text{a.e. in }\Omega, \quad J(u_\delta)=m,
\]
and
\[
I_{\mu,\delta}(u_\delta) \leq I_{\mu,\delta}(v) \quad \text{for every }v\in\mathcal{M}_m^+.
\]
By assumption, signed constrained variations are admissible at $u_\delta$. More precisely, the positivity and truncation-renormalization argument ensures that $u_\delta$ is a critical point of $I_{\mu,\delta}$ with respect to all variations tangent to the equality constraint $J(u)=m$.\\
By Lemma \ref{lem:energy},
\[
\left\langle E_\varepsilon'(u_\delta),\phi \right\rangle = \int_\Omega A_\varepsilon(x,\nabla u_\delta) \cdot\nabla\phi dx\quad  \forall \phi\in X.
\]
The integral on the right is finite. Indeed, from Lemma \ref{lem:energy} we have $A_\varepsilon(\cdot,\nabla u(\cdot)) \in L^{p'(\cdot)}(\Omega)$. Thus, the Hölder inequality yields
\begin{equation}\label{lem2.8-aa}
\left| \int_\Omega A_\varepsilon(x,\nabla u_\delta) \cdot\nabla\phi dx \right| \leq 2 \left\| A_\varepsilon(\cdot,\nabla u_\delta)) \right\|_{p'(\cdot)} \|\phi\|.
\end{equation}
We now compute the directional derivative of $\mathcal{G}_\delta$ at $u_\delta$. Let $\phi\in X$ be an admissible signed direction, and let $t$ be sufficiently small so that the corresponding truncated or local variation is defined. The Fundamental Theorem of Calculus gives, for almost every $x\in\Omega$,
\[
G_\delta \bigl(x,u_\delta(x)+t\phi(x)\bigr) - G_\delta \bigl(x,u_\delta(x)\bigr) = t \int_0^1 \bigl( u_\delta(x)+st\phi(x)+\delta \bigr)^{-\gamma(x)} \phi(x)ds.
\]
Hence
\[
\frac{ \mathcal{G}_\delta(u_\delta+t\phi) - \mathcal{G}_\delta(u_\delta) }{t} = \int_\Omega h(x) \int_0^1 \bigl( u_\delta(x)+st\phi(x)+\delta \bigr)^{-\gamma(x)} \phi(x)ds\,dx.
\]
Since $0<\delta\leq1$ and $\gamma(x)\leq\gamma^+$, we have $0< (s+\delta)^{-\gamma(x)} \leq \delta^{-\gamma^+}$. Thus
\[
\left| h(x) \bigl( u_\delta(x)+st\phi(x)+\delta \bigr)^{-\gamma(x)} \phi(x) \right| \leq \delta^{-\gamma^+}  \|h\|_{\infty} |\phi(x)|.
\]
By the continuous embedding $L^{p(\cdot)}(\Omega)\hookrightarrow L^1(\Omega)$, $\phi\in L^1(\Omega)$, and hence, the right-hand side is integrable. Additionally, for almost every $x\in\Omega$ and every $s\in[0,1]$,
\[
\bigl( u_\delta(x)+st\phi(x)+\delta \bigr)^{-\gamma(x)} \rightarrow \bigl( u_\delta(x)+\delta \bigr)^{-\gamma(x)}
\]
as $t\to0$. Thus the DCT gives
\[
\left\langle \mathcal{G}_\delta'(u_\delta),\phi \right\rangle = \int_\Omega h(x) \bigl( u_\delta(x)+\delta \bigr)^{-\gamma(x)} \phi(x) dx.
\]
This expression, i.e. Gâteaux derivative of $\mathcal{G}_\delta$, defines a  linear (in $\phi$) and bounded functional on $X$ since
\[
\left| \left\langle \mathcal{G}_\delta'(u_\delta),\phi \right\rangle \right| \leq C\delta^{-\gamma^+} \|h\|_{\infty}  \|1\|_{p'(\cdot)} \|\phi\|.
\]
Since
\[
I_{\mu,\delta}(u) = E_\varepsilon(u) - \mu\mathcal{G}_\delta(u),
\]
the preceding calculations give
\[
\left\langle I_{\mu,\delta}'(u_\delta),\phi \right\rangle = \left\langle E_\varepsilon'(u_\delta),\phi \right\rangle - \mu \left\langle \mathcal{G}_\delta'(u_\delta),\phi \right\rangle = \int_\Omega A_\varepsilon(x,\nabla u_\delta) \cdot\nabla\phi dx - \mu \int_\Omega h(x) \bigl(u_\delta+\delta\bigr)^{-\gamma(x)} \phi dx.
\]
By Lemma~\ref{lem:constraint}, the functional $J$ is continuously differentiable, and for $u_\delta\geq0$, it reads
\[
\left\langle J'(u_\delta),\phi \right\rangle = \int_\Omega u_\delta(x)^{p(x)-1}\phi(x) dx.
\]
The derivative  $J'(u_\delta)$ is not the zero functional. Indeed, testing it in the direction $u_\delta$ gives
\[
\langle J'(u_\delta),u_\delta\rangle =\rho_{p(\cdot)}(u_\delta)\geq p^-m >0,
\]
and hence, $J'(u_\delta)\neq0$. It follows that the level set
\[
\mathcal{M}_m = \{u\in X:J(u)=m\}
\]
is locally a $C^1$ codimension-one constraint surface near $u_\delta$. Its tangent space at $u_\delta$ is
\[
T_{u_\delta}\mathcal{M}_m = \ker J'(u_\delta) = \left\{ \phi\in X: \int_\Omega u_\delta^{p(x)-1}\phi dx=0\right\}.
\]
Next, we need to show that the first variation vanishes in every tangent direction. Let $\phi\in\ker J'(u_\delta)$. Thus $\left\langle J'(u_\delta),\phi \right\rangle = 0$. Since $J'(u_\delta)\neq0$, choose $\psi\in X$ such that $\left\langle J'(u_\delta),\psi \right\rangle \neq0$. For example, one may take $\psi=u_\delta$, because
\[
\left\langle J'(u_\delta),u_\delta \right\rangle = \rho_{p(\cdot)}(u_\delta)>0.
\]
Define
\[
F(t,s) := J(u_\delta+t\phi+s\psi)-m.
\]
Then
\[
F(0,0)=J(u_\delta)-m=0 \quad  \text{and}\quad \frac{\partial F}{\partial s}(0,0) = \left\langle J'(u_\delta),\psi \right\rangle \neq0.
\]
By the implicit function theorem, there exist $\tau>0$ and a continuously differentiable function $s:(-\tau,\tau)\longrightarrow\mathbb{R}$ such that
\[
s(0)=0  \quad  \text{and}\quad F(t,s(t))=0 \quad\text{for all }|t|<\tau.
\]
Equivalently,
\[
J\bigl( u_\delta+t\phi+s(t)\psi \bigr) = m.
\]
Thus the curve
\[
v(t) := u_\delta+t\phi+s(t)\psi
\]
lies in the equality-constraint surface $\mathcal{M}_m$. Differentiating the identity $J(v(t))=m$ at $t=0$, and considering that
\[
v'(0)=\phi+s'(0)\psi,
\]
gives
\[
0 = \left. \frac{d}{dt}J(v(t)) \right|_{t=0} = \left\langle J'(u_\delta), \phi+s'(0)\psi \right\rangle = \left\langle J'(u_\delta),\phi \right\rangle + s'(0) \left\langle J'(u_\delta),\psi \right\rangle.
\]
The first term is zero because $\phi\in\ker J'(u_\delta)$. Hence $s'(0) \left\langle J'(u_\delta),\psi \right\rangle = 0$.
However, since $\left\langle J'(u_\delta),\psi \right\rangle \neq0$, we conclude that $ s'(0)=0$, and therefore, $v'(0)=\phi$. By the signed-variation assumption, the curve $v(t)$, or its truncated and renormalized approximation, is admissible in $\mathcal{M}_m^+$ for both positive and negative sufficiently small $t$. Additionally, since $u_\delta=v(0)$ is a minimizer,
\[
I_{\mu,\delta}(v(t)) \geq I_{\mu,\delta}(v(0))
\]
for all sufficiently small $t$, with either sign. Thus the real-valued function $t\mapsto I_{\mu,\delta}(v(t))$ has a local minimum at $t=0$ yielding
\[
0 = \left. \frac{d}{dt} I_{\mu,\delta}(v(t)) \right|_{t=0}= \left\langle I_{\mu,\delta}'(u_\delta),v'(0) \right\rangle = \left\langle I_{\mu,\delta}'(u_\delta),\phi \right\rangle.
\]
Therefore
\[
\left\langle I_{\mu,\delta}'(u_\delta),\phi \right\rangle = 0 \quad \text{for every } \phi\in\ker J'(u_\delta)
\]
which, equivalently means, that
\begin{equation}\label{kernel}
\ker J'(u_\delta) \subset \ker I_{\mu,\delta}'(u_\delta).
\end{equation}
We now derive the Lagrange multiplier directly. Choose $\psi\in X$ such that $\left\langle J'(u_\delta),\psi \right\rangle \neq0$.
Define
\[
\lambda_\delta := \frac{ \left\langle I_{\mu,\delta}'(u_\delta),\psi \right\rangle }{ \left\langle J'(u_\delta),\psi \right\rangle }.
\]
We hall show that
\[
I_{\mu,\delta}'(u_\delta) = \lambda_\delta J'(u_\delta) \quad\text{in }X^*.
\]
Let \(\phi\in X\) be arbitrary, and define
\[
\phi_0 := \phi - \frac{ \left\langle J'(u_\delta),\phi \right\rangle }{ \left\langle J'(u_\delta),\psi \right\rangle } \psi.
\]
Then
\[
\left\langle J'(u_\delta),\phi_0 \right\rangle = \left\langle J'(u_\delta),\phi \right\rangle - \frac{ \left\langle J'(u_\delta),\phi \right\rangle }{ \left\langle J'(u_\delta),\psi \right\rangle } \left\langle J'(u_\delta),\psi \right\rangle = 0.
\]
Therefore, $\phi_0\in\ker J'(u_\delta)$. This, by \eqref{kernel}, implies
\[
\left\langle I_{\mu,\delta}'(u_\delta),\phi_0 \right\rangle = 0.
\]
Expanding this identity gives
\[
0 = \left\langle I_{\mu,\delta}'(u_\delta),\phi \right\rangle - \frac{ \left\langle J'(u_\delta),\phi \right\rangle }{ \left\langle J'(u_\delta),\psi \right\rangle } \left\langle I_{\mu,\delta}'(u_\delta),\psi \right\rangle.
\]
By the definition of \(\lambda_\delta\),
\begin{equation}\label{lag-multip-a}
\left\langle I_{\mu,\delta}'(u_\delta),\phi \right\rangle = \lambda_\delta \left\langle J'(u_\delta),\phi \right\rangle
\end{equation}
for every $\phi\in X$.
Thus
\begin{equation}\label{lag-multip-b}
I_{\mu,\delta}'(u_\delta) = \lambda_\delta J'(u_\delta).
\end{equation}
A particularly convenient choice is $\psi=u_\delta$. In that case,
\[
\left\langle J'(u_\delta),u_\delta \right\rangle = \int_\Omega u_\delta^{p(x)}dx,
\]
and therefore
\begin{equation}\label{lag-multip-c}
\lambda_\delta = \frac{ \left\langle I_{\mu,\delta}'(u_\delta),u_\delta \right\rangle }{ \displaystyle \int_\Omega u_\delta^{p(x)}dx  },
\end{equation}
with noting that the denominator is strictly positive and satisfies
\[
p^-m \leq \int_\Omega u_\delta^{p(x)}dx \leq p^+m.
\]
Using the derivative formula for $I_{\mu,\delta}$, \eqref{lag-multip-c} provides \eqref{Euler–Lagrange-identity}.
\[
\int_\Omega A_\varepsilon(x,\nabla u_\delta) \cdot\nabla\phi dx = \lambda_\delta \int_\Omega u_\delta^{p(x)-1}\phi dx + \mu \int_\Omega h(x) \bigl(u_\delta+\delta\bigr)^{-\gamma(x)} \phi dx
\]
To show the uniqueness of the Lagrange multiplier, suppose that two numbers $\lambda_\delta^{(1)}$ and $\lambda_\delta^{(2)}$ both satisfy the Euler-Lagrange identity \eqref{Euler–Lagrange-identity}. Subtracting the two identities gives
\[
\bigl( \lambda_\delta^{(1)} - \lambda_\delta^{(2)} \bigr) \int_\Omega u_\delta^{p(x)-1}\phi dx = 0\quad \forall \phi\in X.
\]
Choose $\phi=u_\delta$.  Then
\[
\bigl( \lambda_\delta^{(1)} - \lambda_\delta^{(2)} \bigr) \int_\Omega u_\delta^{p(x)}dx = 0.
\]
Since
\[
\int_\Omega u_\delta^{p(x)} dx \geq p^-m>0,
\]
we conclude that $\lambda_\delta^{(1)} = \lambda_\delta^{(2)}$. The proof is complete.
\end{proof}

\section{Passage to the Singular Limit} \label{sec:limit}

\begin{lemma}\label{lem:duality}
Suppose Assumptions~\ref{ass:base} and~\ref{ass:barrier} hold. Then
$f_\delta(\cdot)=h(\cdot)(u_\delta(\cdot)+\delta)^{-\gamma(\cdot)}$ is uniformly bounded in $X^*$. If $\delta_n\downarrow0$ and $u_{\delta_n}\to u$ a.e., then $u\ge c_*d^\vartheta>0$ a.e. and
\begin{equation}\label{lem3.1-aa}
\int_\Omega h(x)(u_{\delta_n}+\delta_n)^{-\gamma(x)}\phi dx \to\int_\Omega h(x)u^{-\gamma(x)}\phi dx
\end{equation}
for every $\phi\in X$.
\end{lemma}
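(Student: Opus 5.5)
The plan is to build a single $\delta$-independent integrable majorant from (H5), (H6) and the Hardy inequality of Lemma~\ref{lem:hardy}, and then get everything from it.

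\textbf{Pointwise bound.} By (H5), for a.e.\ $x$ and $0<\delta\le\delta_0$ we have $u_\delta(x)+\delta\ge u_\delta(x)\ge c_*d(x)^\vartheta$. Since $\gamma>0$, this gives
\[
0\le f_\delta(x)\le h(x)\,c_*^{-\gamma(x)}d(x)^{-\vartheta\gamma(x)}\le C_*\,h(x)d(x)^{-\vartheta\gamma(x)},
\]
where $C_*:=\max\{c_*^{-\gamma^-},c_*^{-\gamma^+}\}$.

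\textbf{Uniform bound in $X^*$.} For $\phi\in X$ write
\[
h\,d^{-\vartheta\gamma}|\phi| = \bigl(h\,d^{1-\vartheta\gamma}\bigr)\,\frac{|\phi|}{d}=\omega_\vartheta\frac{|\phi|}{d}.
\]
Hölder's inequality (Proposition~\ref{Prop:2.1}) combined with Lemma~\ref{lem:hardy} then yields
\[
\Bigl|\int_\Omega f_\delta\phi\,dx\Bigr|\le C_*\int_\Omega \omega_\vartheta\frac{|\phi|}{d}\,dx\le C\,C_*\|\omega_\vartheta\|_{p'(\cdot)}\,C_H\,\|\phi\|.
\]
By (H6), $\|\omega_\vartheta\|_{p'(\cdot)}<\infty$. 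Hence $\sup_{0<\delta\le\delta_0}\|f_\delta\|_*<\infty$. This is the step that genuinely uses both hypotheses of Assumption~\ref{ass:barrier}: the weight $\omega_\vartheta$ is designed precisely so that one power of $d$ can be absorbed into the Hardy quotient $\phi/d$.

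\textbf{Positivity of the limit.} If $u_{\delta_n}\to u$ a.e., then passing to the limit in $u_{\delta_n}\ge c_*d^\vartheta$ (a.e., off a countable union of null sets) gives $u\ge c_*d^\vartheta$ a.e. Since $d>0$ in $\Omega$, we get $u>0$ a.e.

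\textbf{Convergence against test functions.} For a.e.\ $x$, $u_{\delta_n}(x)+\delta_n\to u(x)>0$. Continuity of $s\mapsto s^{-\gamma(x)}$ on $(0,\infty)$ then gives $h(u_{\delta_n}+\delta_n)^{-\gamma}\phi\to h u^{-\gamma}\phi$ a.e. Fix $\phi\in X$. The integrands are dominated by $C_*\,\omega_\vartheta\,|\phi|/d$, which lies in $L^1(\Omega)$ by the previous step, and this majorant does not depend on $n$. The dominated convergence theorem then yields \eqref{lem3.1-aa}. The limit functional is itself in $X^*$, since it obeys the same bound.

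The main obstacle is the Hölder/Hardy pairing. One must make sure the Hardy inequality is used in the form $\|\phi/d\|_{p(\cdot)}\le C_H\|\phi\|$ with $|\phi|$, which is fine because $\|\,|\phi|\,\|=\|\phi\|$ on $X$. Everything else is elementary once (H5) supplies a lower bound on $u_\delta$ that does not depend on $\delta$.
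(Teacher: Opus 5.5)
Your proposal is correct and follows essentially the same route as the paper. Both arguments use the $\delta$-independent pointwise bound from (H5), the factorization $h\,d^{-\vartheta\gamma}|\phi|=\omega_\vartheta|\phi|/d$ with H\"older and the Hardy inequality for the uniform $X^*$ bound, and dominated convergence with the same $n$-independent majorant $C\,\omega_\vartheta|\phi|/d$. One small difference in your favour: by passing to the limit in $u_{\delta_n}\ge c_*d^\vartheta$ (for $n$ large enough that $\delta_n\le\delta_0$), you record the full lower bound $u\ge c_*d^\vartheta$ a.e. that the statement claims, whereas the paper's write-up only concludes $u>0$.
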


\begin{proof}
For almost every \(x\in\Omega\) and every $0<\delta\leq\delta_0$, $(u_\delta(x)+\delta)^{-\gamma(x)} \leq u_\delta(x)^{-\gamma(x)}$. Therefore, using $\rm(H5)$, we obtain
\[
u_\delta(x)^{-\gamma(x)} \leq \bigl(c_*d(x)^\vartheta\bigr)^{-\gamma(x)} = c_*^{-\gamma(x)} d(x)^{-\vartheta\gamma(x)}.
\]
Since the map $q\mapsto c_*^{-q}$ is continuous on the compact interval $[\gamma^-,\gamma^+]$, its maximum occurs at one of the endpoints. Define
\[
C_c := \max\left\{ c_*^{-\gamma^-}, c_*^{-\gamma^+} \right\}.
\]
Then
\[
c_*^{-\gamma(x)} \leq C_c
\]
for every \(x\in\Omega\). Consequently,
\begin{equation}\label{lem3.1-a}
\bigl(u_\delta(x)+\delta\bigr)^{-\gamma(x)} \leq u_\delta(x)^{-\gamma(x)} \leq C_c d(x)^{-\vartheta\gamma(x)}
\end{equation}
for almost every $x\in\Omega$, and every $0<\delta\le\delta_0$. Multiplying \eqref{lem3.1-a} by $h(x)\geq0$, we obtain
\begin{equation}\label{lem3.1-b}
0\leq f_\delta(x) \leq C_c h(x)d(x)^{-\vartheta\gamma(x)}.
\end{equation}
Let $\phi\in X$ be arbitrary. Then using $\rm(H6)$ gives
\begin{equation}\label{lem3.1-c}
|f_\delta(x)\phi(x)| \leq C_c h(x) d(x)^{-\vartheta\gamma(x)} |\phi(x)| = C_ch(x) d(x)^{1-\vartheta\gamma(x)} \frac{|\phi(x)|}{d(x)}\leq C_c \omega_\vartheta(x) \frac{|\phi(x)|}{d(x)}.
\end{equation}
Integrating \eqref{lem3.1-c} and applying the Hölder inequality along with Lemma~\ref{lem:hardy} gives
\begin{equation}\label{lem3.1-d}
 \left| \int_\Omega f_\delta(x)\phi(x)dx \right| \leq C_* \|\phi\|
\end{equation}
for every \(\phi\in X\) and every $0<\delta\leq\delta_0$, where $ C_* := 2C_cC_H \|\omega_\vartheta\|_{p'(\cdot)}$. Notice that the constant $C_*$ depends on $ c_*, \,\gamma^-, \, \gamma^+, \, C_H$ and $\|\omega_\vartheta\|_{p'(\cdot)}$, but it is independent of $\delta$.\\
For each $0<\delta\leq\delta_0$, define
\[
\mathcal{F}_\delta: X \longrightarrow \mathbb{R}
\]
by
\[
\left\langle \mathcal{F}_\delta,\phi \right\rangle := \int_\Omega f_\delta(x)\phi(x)dx.
\]
The map $\mathcal{F}_\delta$ is linear in $\phi$. Then
\[
\left| \left\langle \mathcal{F}_\delta,\phi \right\rangle \right| \leq C_* \|\phi\|.
\]
Therefore, $\mathcal{F}_\delta \in X^*$ and
\[
\|\mathcal{F}_\delta\|_ {*} = \sup_{\substack{ \phi\in X\\ \|\phi\|\leq 1 }} \left| \left\langle \mathcal{F}_\delta,\phi \right\rangle \right| \leq C_*.
\]
Hence
\begin{equation}\label{lem3.1-def}
\sup_{0<\delta\le\delta_0} \|f_\delta\|_ {*} \leq C_*<\infty .
\end{equation}
This proves the uniform boundedness of the regularized singular sources in the dual space.\\
Suppose now that
\[
\delta_n\downarrow0 \quad \text{and} \quad u_{\delta_n}(x)\rightarrow u(x)
\]
for almost every $x\in\Omega$ with $0<\delta_n\leq \delta_0$. Then by $\rm(H5)$
\[
u_{\delta_n}(x) \geq c_*d(x)^\vartheta>0
\]
for every $n$, and for almost every $x\in\Omega$. Therefore,
\[
 u(x)>0, \, \text{and hence, }\, u(x)^{-\gamma(x)}>0 \quad\text{for almost every }x\in\Omega.
\]
Since $ u_{\delta_n}(x)\rightarrow u(x)$ for almost every $x\in\Omega$,  and $\delta_n\rightarrow0$,
\[
u_{\delta_n}(x)+\delta_n \rightarrow u(x)
\]
from which it follows
\[
\bigl(u_{\delta_n}(x)+\delta_n\bigr)^{-\gamma(x)} \rightarrow u(x)^{-\gamma(x)}
\]
for almost every $x\in\Omega$. Multiplication by the fixed measurable coefficient $h(x)$ gives
\[
h(x) \bigl(u_{\delta_n}(x)+\delta_n\bigr)^{-\gamma(x)} \rightarrow h(x)u(x)^{-\gamma(x)}
\]
for almost every $x\in\Omega$. Thus, for every fixed  test function $\phi$,
\begin{equation}\label{lem3.1-e}
h(x) \left(u_{\delta_n}(x)+\delta_n\right)^{-\gamma(x)} \phi(x) \rightarrow h(x)u(x)^{-\gamma(x)}\phi(x)
\end{equation}
for almost every $x\in\Omega$. Fix $\phi\in X$. By \eqref{lem3.1-a},
\begin{equation}\label{lem3.1-f}
\left| h(x) \bigl(u_{\delta_n}(x)+\delta_n\bigr)^{-\gamma(x)} \phi(x) \right| \leq C_c h(x)d(x)^{-\vartheta\gamma(x)} |\phi(x)| = C_c \omega_\vartheta(x) \frac{|\phi(x)|}{d(x)}:=H_\phi(x).
\end{equation}
Considering Assumption~\ref{ass:barrier}, and applying the Hölder inequality and Lemma~\ref{lem:hardy} gives
\begin{equation}\label{lem3.1-ff}
\int_\Omega H_\phi(x)\,dx = C_c \int_\Omega \omega_\vartheta(x) \frac{|\phi(x)|}{d(x)} \,dx \leq 2C_cC_H \|\omega_\vartheta\|_{p'(\cdot)} \|\phi\|= C_* \|\phi\| <\infty.
\end{equation}
Notice that the function $H_\phi$ is independent of $n$, and the same estimate also shows that the limiting singular term is globally integrable against $\phi$. Indeed, by \eqref{lem3.1-a}
\[
h(x)u(x)^{-\gamma(x)}|\phi(x)| \leq H_\phi(x),
\]
and hence, $h u^{-\gamma(\cdot)}\phi \in L^1(\Omega)$. Finally, by \eqref{lem3.1-e}-\eqref{lem3.1-ff}, the DCT provides \eqref{lem3.1-aa}. This completes the proof; however, we would also like to note that the limiting source $f(x):=h(x)u(x)^{-\gamma(x)}$ belongs to $X^*$.  Indeed, using the boundary estimate \eqref{lem3.1-a}, and repeating the calculations from above, one can obtain
\[
\left| \int_\Omega h(x)u(x)^{-\gamma(x)}\phi(x)dx \right| \leq C_c \int_\Omega \omega_\vartheta(x) \frac{|\phi(x)|}{d(x)} dx = \int_\Omega H_\phi(x)dx \leq C_* \|\phi\|.
\]
Thus
\begin{equation}\label{lem3.1-ffg}
f\in X^* \quad \text{and} \quad \|f\|_ {*} \leq C_*.
\end{equation}
\end{proof}

\begin{lemma}\label{lem:splus}
The operator $\mathcal{A}_\varepsilon :X \longrightarrow X^*$ defined by
\[
\langle \mathcal A_\varepsilon u,\phi\rangle=\int_\Omega A_\varepsilon(x,\nabla u)\cdot\nabla\phi dx
\]
is strictly monotone, and of type $(S_+)$; that is,\\
If $u_n\rightharpoonup u$ in $X$ and $\limsup_n\langle\mathcal A_\varepsilon u_n,u_n-u\rangle\leq0$, then $u_n\to u$ in $X$.
\end{lemma}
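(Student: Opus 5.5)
Strict monotonicity is immediate from Lemma~\ref{lem:quantitative-monotonicity-flux}. For $u\neq v$ in $X$ we have
\[
\langle \mathcal A_\varepsilon u-\mathcal A_\varepsilon v,u-v\rangle=\int_\Omega \bigl(A_\varepsilon(x,\nabla u)-A_\varepsilon(x,\nabla v)\bigr)\cdot(\nabla u-\nabla v)\,dx .
\]
The integrand is nonnegative a.e. and strictly positive on the set $\{\nabla u\neq\nabla v\}$. This set has positive measure, because $u-v\in X$ is nonzero and $\|\cdot\|=\|\nabla\cdot\|_{p(\cdot)}$ is a norm on $X$. Well-definedness of $\mathcal A_\varepsilon$ as a map into $X^*$ follows from the Nemytskii property in Lemma~\ref{lem:energy}.

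For $(S_+)$, let $u_n\rightharpoonup u$ in $X$ with $\limsup_n\langle\mathcal A_\varepsilon u_n,u_n-u\rangle\le0$. Since $\mathcal A_\varepsilon u\in X^*$, weak convergence gives $\langle\mathcal A_\varepsilon u,u_n-u\rangle\to0$. Hence
\[
0\le \int_\Omega g_n\,dx:=\langle \mathcal A_\varepsilon u_n-\mathcal A_\varepsilon u,u_n-u\rangle\to0,
\]
where $g_n\ge0$ is the monotonicity integrand. So $g_n\to0$ in $L^1(\Omega)$. I then split $\Omega$ into $\Omega^{\ge}=\{p\ge2\}$ and $\Omega^{<}=\{p<2\}$.

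On $\Omega^{\ge}$, \eqref{lema3.2-e} gives $\int_{\Omega^{\ge}}|\nabla u_n-\nabla u|^{p(x)}dx\le c_0^{-1}\int g_n\to0$ directly.

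On $\Omega^{<}$, I would use the standard Hölder trick with exponents $2/p(x)$ and $2/(2-p(x))$ applied pointwise. Write $w_n=\nabla u_n-\nabla u$ and $R_n=\varepsilon+|\nabla u_n|+|\nabla u|$. Then
\[
|w_n|^{p(x)}=\bigl(|w_n|^2R_n^{p(x)-2}\bigr)^{p(x)/2}R_n^{p(x)(2-p(x))/2}
\le \tfrac{p(x)}{2}\theta^{-1}|w_n|^2R_n^{p(x)-2}+\tfrac{2-p(x)}{2}\theta^{\,p(x)/(2-p(x))}R_n^{p(x)}
\]
for any $\theta\in(0,1]$, by weighted Young. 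The constant $\theta^{p/(2-p)}$ is bounded by $\theta^{p^-/(2-p^-)}$ uniformly in $x$. The first term is bounded by $C\theta^{-1}g_n$ via \eqref{lema3.2-f}. The second is at most $C\theta^{\alpha}\int_\Omega R_n^{p(x)}dx$ with $\alpha=p^-/(2-p^-)>0$. This integral is uniformly bounded, since $(u_n)$ is bounded in $X$ (weakly convergent) and Proposition~\ref{Prop:2.2} applies. Letting first $n\to\infty$ and then $\theta\to0$ yields $\int_{\Omega^{<}}|w_n|^{p(x)}dx\to0$.

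Combining the two pieces, $\rho_{p(\cdot)}(\nabla u_n-\nabla u)\to0$, so Proposition~\ref{Prop:2.3} gives $\|u_n-u\|\to0$, i.e. $u_n\to u$ in $X$.

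The main obstacle is the subquadratic region. There the monotonicity inequality only controls the weighted quantity $|w_n|^2R_n^{p-2}$, and the exponents vary with $x$. The uniform-in-$x$ Young inequality with a small parameter, together with the uniform modular bound on $R_n$, is the step that needs care. The constants must depend only on $p^-$ and $p^+$ so that the pointwise estimates integrate.
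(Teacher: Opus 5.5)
Your proof is correct. Its structure matches the paper's: strict monotonicity from the pointwise strict monotonicity of $A_\varepsilon(x,\cdot)$ and the fact that $\|\nabla\cdot\|_{p(\cdot)}$ is a norm on $X$, then the reduction to $\langle\mathcal A_\varepsilon u_n-\mathcal A_\varepsilon u,u_n-u\rangle\to0$, then the split into $\{p\ge2\}$ and $\{p<2\}$ using \eqref{lema3.2-e} and \eqref{lema3.2-f}, with the same factorization $|w_n|^{p(x)}=g_n^{p(x)/2}S_n^{p(x)(2-p(x))/2}$ on the subquadratic set.

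The one genuine difference is how you close the subquadratic estimate. The paper applies the variable-exponent H\"older inequality with $r(x)=2/p(x)$ and $r'(x)=2/(2-p(x))$. The modular of the first factor is $\int g_n\to0$, and the modular of the second is $\int S_n^{p(x)}$, which is bounded, so the product tends to zero in one step. You instead use the scalar Young inequality pointwise with a small parameter $\theta$, integrate, and let $n\to\infty$ and then $\theta\to0$.

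Your route is more elementary, since everything stays in $L^1$ with constants depending only on $p^-$. It also avoids a technical point the paper passes over. When $p^-<2\le p^+$, the exponent $r'(x)=2/(2-p(x))$ is unbounded on $\{p<2\}$. In that case the paper's appeal to the modular--norm relations for the second factor needs the fact that a bounded modular still gives a bounded Luxemburg norm. That fact holds by convexity of the modular, but Proposition~\ref{Prop:2.2} as stated does not cover it. What the paper's version buys in return is a direct product estimate, without the double limit.
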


\begin{proof}
For $u, \phi \in X$, since $A_\varepsilon(\cdot,\eta(\cdot)) \in L^{p'(\cdot)}(\Omega)$, it reads
\[
\left| \left\langle \mathcal{A}_\varepsilon u,\phi \right\rangle \right|=\left| \int_\Omega A_\varepsilon(x,\nabla u) \cdot\nabla\phi dx\right| \leq 2 \left\| A_\varepsilon(\cdot,\nabla u) \right\|_{p'(\cdot)} \|\phi\|,
\]
which implies that $\mathcal{A}_\varepsilon u\in X^*$. Moreover, if $(u_n)$ is bounded in $X$, then $(\nabla u_n)$ is bounded in \(L^{p(\cdot)}(\Omega)\). Additionally, by Lemma \ref{lem:energy}, the Nemytskii operator  $\eta \mapsto A_\varepsilon(\cdot,\eta(\cdot))$ maps $L^{p(\cdot)}(\Omega)$ boundedly into $L^{p'(\cdot)}(\Omega)$, hence, the sequence $(A_\varepsilon(x,\nabla u_n))$ is bounded in \(L^{p'(\cdot)}(\Omega)\). Thus \(\mathcal{A}_\varepsilon\) maps bounded subsets of $X$ into bounded subsets of $X^*$.\\
By Lemma \ref{lem:quantitative-monotonicity-flux}, $A_\varepsilon(x,\cdot)$ is strictly convex. Let $u,v\in X$. Then
\begin{equation}\label{lema3.2-b}
\left\langle \mathcal{A}_\varepsilon u - \mathcal{A}_\varepsilon v, u-v \right\rangle = \int_\Omega \bigl( A_\varepsilon(x,\nabla u) - A_\varepsilon(x,\nabla v) \bigr) \cdot (\nabla u-\nabla v)\,dx \geq 0.
\end{equation}
Suppose equality holds. Since the integrand is nonnegative, its integral can vanish only if
\[
\bigl( A_\varepsilon(x,\nabla u(x)) - A_\varepsilon(x,\nabla v(x)) \bigr) \cdot \bigl( \nabla u(x)-\nabla v(x) \bigr) = 0
\]
a.e. in $\Omega$. Pointwise strict monotonicity of $A_\varepsilon(x,\cdot)$ then implies $\nabla u(x)=\nabla v(x)$ a.e. in $\Omega$. Thus $\nabla(u-v)=0$ a.e. in $\Omega$.  Because $u-v\in X$,
\[
\|u-v\| = 0.
\]
Hence $u=v$ a.e. in $\Omega$. This proves the strict monotonicity of $\mathcal{A}_\varepsilon$; that is,
\begin{equation}\label{lema3.2-c}
\left\langle \mathcal{A}_\varepsilon u - \mathcal{A}_\varepsilon v, u-v \right\rangle >0 \quad\text{whenever }u\neq v.
\end{equation}
Suppose $u_n \in X$ for every $n$ with $u_n\rightharpoonup u$ in $X$. Because $u\in X$, $\mathcal{A}_\varepsilon u\in X^*$. Thus the weak convergence of $(u_n)$ to $u$ gives
\[
(u_n-u)\rightharpoonup0 \quad\text{in }X, \, \text{ and therefore}, \quad \left\langle \mathcal{A}_\varepsilon u, u_n-u \right\rangle \rightarrow0.
\]
Define
\[
D_n := \left\langle \mathcal{A}_\varepsilon u_n - \mathcal{A}_\varepsilon u, u_n-u \right\rangle = \int_\Omega \bigl( A_\varepsilon(x,\nabla u_n) - A_\varepsilon(x,\nabla u) \bigr) \cdot (\nabla u_n-\nabla u)\,dx.
\]
By monotonicity, $D_n\ge0$. Moreover, since
\[
\begin{aligned}
D_n &= \left\langle \mathcal{A}_\varepsilon u_n, u_n-u \right\rangle - \left\langle \mathcal{A}_\varepsilon u, u_n-u \right\rangle,
\end{aligned}
\]
hence
\[
\limsup_{n\to\infty}D_n \leq \limsup_{n\to\infty} \left\langle \mathcal{A}_\varepsilon u_n, u_n-u \right\rangle - \lim_{n\to\infty} \left\langle \mathcal{A}_\varepsilon u, u_n-u \right\rangle \leq 0.
\]
Since \(D_n\ge0\), it follows that $D_n\rightarrow0$. Therefore
\begin{equation}\label{lema3.2-d}
\int_\Omega \bigl( A_\varepsilon(x,\nabla u_n) - A_\varepsilon(x,\nabla u) \bigr) \cdot (\nabla u_n-\nabla u) dx \rightarrow 0.
\end{equation}
Next, we partition \(\Omega\) into the measurable sets
\[
\Omega_{\geq2} := \{x\in\Omega:p(x)\geq2\} \quad \text{and} \quad \Omega_{<2} := \{x\in\Omega:1<p(x)<2\}.
\]
For $x \in \Omega_{\geq2}$: \eqref{lema3.2-e} gives
\[
c_0 \int_{\Omega_{\geq2}} |w_n|^{p(x)}dx \leq \int_{\Omega_{\geq2}} (A_\varepsilon(x,\nabla u_n) - A_\varepsilon(x,\nabla u)) \cdot w_ndx \leq D_n.
\]
where $w_n(x):=\nabla u_n(x)-\nabla u(x)$. Since $D_n\to 0$, it follows that
\begin{equation} \label{lema3.2-g}
\int_{\Omega_{\geq2}} |\nabla u_n-\nabla u|^{p(x)}dx \rightarrow 0.
\end{equation}
For $x\in\Omega_{<2}$: define
\[
S_n(x) := \varepsilon+ |\nabla u_n(x)|+ |\nabla u(x)|.
\]
Then \eqref{lema3.2-f} gives
\[
c_0 \int_{\Omega_{<2}} |w_n|^2S_n^{p(x)-2}dx \leq \int_{\Omega_{<2}}  \bigl( A_\varepsilon(x,\nabla u_n) - A_\varepsilon(x,\nabla u) \bigr)  \cdot w_n dx \leq D_n
\]
Thus
\[
\int_{\Omega_{<2}} |w_n|^2S_n^{p(x)-2}dx \rightarrow0.
\]
Set
\[
g_n(x) := |w_n(x)|^2S_n(x)^{p(x)-2}.
\]
Then $g_n\geq0$ and
\[
\int_{\Omega_{<2}}g_n(x)dx\rightarrow0.
\]
On $\Omega_{<2}$, we can write the factorization
\[
|w_n|^{p(x)} = \left( |w_n|^2S_n^{p(x)-2} \right)^{p(x)/2} S_n^{\frac{p(x)(2-p(x))}{2}} = g_n^{p(x)/2} S_n^{\frac{p(x)(2-p(x))}{2}}.
\]
Applying the Hölder inequality for the conjugate exponents $r(x):=\frac{2}{p(x)}$ and $ r'(x):=\frac{2}{2-p(x)}$  gives
\[
\int_{\Omega_{<2}}|w_n|^{p(x)}dx \leq 2 \left\| g_n^{p(\cdot)/2} \right\|_{L^{r(\cdot)}(\Omega_{<2})} \times \left\| S_n^{ \frac{p(\cdot)(2-p(\cdot))}{2} } \right\|_{L^{r'(\cdot)}(\Omega_{<2})}.
\]
We estimate the two factors separately. For the first factor,
\[
\rho_{r(\cdot)} \left( g_n^{p(\cdot)/2} \right) = \int_{\Omega_{<2}} \left( g_n^{p(x)/2} \right)^{2/p(x)} dx =\int_{\Omega_{<2}}g_n dx \rightarrow0.
\]
The modular--norm relations therefore imply
\begin{equation}\label{lema3.2-h}
\left\| g_n^{p(\cdot)/2} \right\|_{L^{r(\cdot)}(\Omega_{<2})} \rightarrow 0.
\end{equation}
For the second factor, similarly
\[
\rho_{r'(\cdot)} \left( S_n^{ \frac{p(\cdot)(2-p(\cdot))}{2} } \right)  = \int_{\Omega_{<2}}S_n^{p(x)} dx.
\]
Since
\[
S_n^{p(x)} \leq C \left( \varepsilon^{p(x)} + |\nabla u_n|^{p(x)} + |\nabla u|^{p(x)} \right),
\]
where $C$ depends only on $p^+$, it follows
\[
\int_{\Omega_{<2}}S_n^{p(x)} dx \leq C\max\{ \varepsilon^{p^-}, \varepsilon^{p^+} \}|\Omega| + C \int_\Omega (|\nabla u_n|^{p(x)}+ |\nabla u|^{p(x)})dx.
\]
Since $u_n\rightharpoonup u$ in $X$, it reads
\[
\sup_n \int_{\Omega_{<2}}S_n^{p(x)}dx <\infty.
\]
The modular--norm relations imply
\begin{equation}\label{lema3.2-j}
\sup_n \left\| S_n^{ \frac{p(\cdot)(2-p(\cdot))}{2} } \right\|_{L^{r'(\cdot)}(\Omega_{<2})} <\infty.
\end{equation}
Combining \eqref{lema3.2-h} and \eqref{lema3.2-j} gives
\begin{equation}\label{lema3.2-k}
\int_{\Omega_{<2}} |\nabla u_n-\nabla u|^{p(x)} dx \rightarrow 0.
\end{equation}
Combining \eqref{lema3.2-g} and \eqref{lema3.2-k} gives
\[
\rho_{p(\cdot)} \bigl( \nabla u_n-\nabla u \bigr) = \int_\Omega |\nabla u_n-\nabla u|^{p(x)} dx  \rightarrow 0,
\]
which implies
\begin{equation}\label{lema3.2-m}
\| u_n-u\| \rightarrow0.
\end{equation}
This completes the proof.
\end{proof}

Theorem \ref{thm:limit}, the main result of the paper, removes the singular regularization $\delta$ of \eqref{eq:main-eigen-system}, preserves normalization and positivity, and identifies the limiting pair as a weak solution of  \eqref{eq:main-eigen-system} at fixed $\varepsilon>0$.

\begin{theorem} \label{thm:limit} Suppose Assumptions~\ref{ass:base} and~\ref{ass:barrier} hold. Fix $\varepsilon>0, \, m>0, \,\mu\geq0$. Let $0<\delta_n\leq1$ with $\delta_n\downarrow0$ as $n \to \infty$. For each $n$, let
\[
\left( u_n,\lambda_n \right): = \left( u_{\varepsilon,m,\mu,\delta_n}, \lambda_{\varepsilon,m,\mu,\delta_n} \right) \in X\times\mathbb R
\]
be a normalized weak solution of the $(\varepsilon,\delta_n)$-regularized approximate problem
\begin{equation}\label{eq:approx-euler-limit-proof}
\int_\Omega A_\varepsilon(x,\nabla u_n)\cdot\nabla\phi dx = \lambda_n \int_\Omega u_n^{p(x)-1}\phi dx + \mu \int_\Omega h(x)(u_n+\delta_n)^{-\gamma(x)} \phi dx, \quad  J(u_n)=m,
\end{equation}
for every $\phi\in X$.\\
Then, after passing to a subsequence, there exist $u_{\varepsilon,m,\mu} \in X$ and $\lambda_{\varepsilon,m,\mu} \in\mathbb R$ such that
\[
u_{\varepsilon,m,\mu,\delta_n} \rightarrow u_{\varepsilon,m,\mu} \, \text{ in } X \quad \text{and} \quad \lambda_{\varepsilon,m,\mu,\delta_n} \rightarrow \lambda_{\varepsilon,m,\mu} \, \text{ in }\mathbb R.
\]
Moreover,
\[
J(u_{\varepsilon,m,\mu})=m\quad \text{and} \quad u_{\varepsilon,m,\mu} \geq c_*d^\vartheta > 0 \, \text{almost everywhere in }\Omega.
\]
The limiting pair $(u_{\varepsilon,m,\mu}, \lambda_{\varepsilon,m,\mu})$ is therefore a positive normalized weak solution of the singular $\varepsilon$-regularized problem
\begin{align}\label{eq:singular-limit-weak-equation}
\int_\Omega A_\varepsilon \bigl( x,\nabla u_{\varepsilon,m,\mu} \bigr) \cdot\nabla\phi dx &= \lambda_{\varepsilon,m,\mu} \int_\Omega u_{\varepsilon,m,\mu}^{p(x)-1} \phi dx + \mu \int_\Omega h u_{\varepsilon,m,\mu}^{-\gamma(x)} \phi  dx,\quad J(u_{\varepsilon,m,\mu})=m
\end{align}
for every $\phi\in X$.
\end{theorem}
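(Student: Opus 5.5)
We follow the standard compactness scheme. We obtain uniform bounds on $(u_n)$ in $X$ and on $(\lambda_n)$, extract convergent subsequences, and identify the limits. We then upgrade weak to strong convergence with the $(S_+)$ property of Lemma~\ref{lem:splus}, and finally pass to the limit in \eqref{eq:approx-euler-limit-proof} term by term, using Lemma~\ref{lem:duality} for the singular source. Throughout we discard finitely many $n$ so that $\delta_n\le\delta_0$; then (H5) and the dual bound of Lemma~\ref{lem:duality} apply uniformly.

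\textbf{Step 1 (a priori bounds).} Test \eqref{eq:approx-euler-limit-proof} with $\phi=u_n$. This gives
\[
\int_\Omega A_\varepsilon(x,\nabla u_n)\cdot\nabla u_n\,dx=\lambda_n\rho_{p(\cdot)}(u_n)+\mu\langle f_{\delta_n},u_n\rangle .
\]
Here $p^-m\le\rho_{p(\cdot)}(u_n)\le p^+m$ and $|\langle f_{\delta_n},u_n\rangle|\le C_*\|u_n\|$. The hypotheses do not say $u_n$ is a minimizer, so the bound on $\|u_n\|$ cannot come from Corollary~\ref{cor:coercive}; it must come from the equation, and this is the step I expect to be the main obstacle. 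The difficulty is that $\lambda_n$ appears on the right-hand side with unknown sign, so the identity alone does not bound $\|u_n\|$. I would first try to use that the $u_n$ are the constrained minimizers of Lemma~\ref{thm:approx} (as the notation $u_{\varepsilon,m,\mu,\delta_n}$ suggests). In that case $I_{\mu,\delta_n}(u_n)\le I_{\mu,\delta_n}(v_m)\le E_\varepsilon(v_m)$ with the fixed competitor $v_m$ from Lemma~\ref{thm:approx}, and Corollary~\ref{cor:coercive} then gives $\sup_n\|u_n\|<\infty$. Given this bound, the lower estimate $A_\varepsilon(x,\xi)\cdot\xi\ge r_\varepsilon(\xi)^{p(x)-2}|\xi|^2$ and the growth bound \eqref{growth-flux} show that the left side is bounded. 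Solving for $\lambda_n$ and dividing by $\rho_{p(\cdot)}(u_n)\ge p^-m$ yields $\sup_n|\lambda_n|<\infty$.

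\textbf{Step 2 (extraction and identification of the limit).} By reflexivity and Proposition~\ref{Prop:2.6}, pass to a subsequence with $u_n\rightharpoonup u$ in $X$, $u_n\to u$ in $L^{p(\cdot)}(\Omega)$ and a.e., and $\lambda_n\to\lambda$. Lemma~\ref{lem:constraint} gives $J(u)=m$, and Lemma~\ref{lem:duality} gives $u\ge c_*d^\vartheta>0$ a.e. together with convergence of the singular term against each fixed $\phi\in X$.

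\textbf{Step 3 (strong convergence).} Test \eqref{eq:approx-euler-limit-proof} with $\phi=u_n-u$ and bound the two terms on the right.
\begin{itemize}
\item For the reaction term, $|u_n|^{p(x)-1}$ is bounded in $L^{p'(\cdot)}(\Omega)$ and $\lambda_n$ is bounded, while $u_n-u\to0$ in $L^{p(\cdot)}(\Omega)$. Hölder's inequality then gives $\lambda_n\int_\Omega u_n^{p(x)-1}(u_n-u)\,dx\to0$.
\item For the singular term, use the pointwise majorant \eqref{lem3.1-a}: $|f_{\delta_n}(u_n-u)|\le C_c\,\omega_\vartheta|u_n-u|/d$. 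Dominated convergence cannot be used here, because the test function varies with $n$. Instead I would split $\Omega$ into a boundary strip $\{d<\eta\}$ and its complement. On the strip, apply Hölder and Hardy (Lemma~\ref{lem:hardy}) to get the bound $C\|\omega_\vartheta\|_{L^{p'(\cdot)}(\{d<\eta\})}\sup_n\|u_n-u\|$, which is small uniformly in $n$ by absolute continuity of the modular. On $\{d\ge\eta\}$ the weight $h d^{-\vartheta\gamma}$ is bounded, so strong $L^{p(\cdot)}$ convergence gives a term tending to $0$. Hence $\mu\langle f_{\delta_n},u_n-u\rangle\to0$.
\end{itemize}
Thus $\limsup_n\langle\mathcal A_\varepsilon u_n,u_n-u\rangle\le0$, and Lemma~\ref{lem:splus} gives $u_n\to u$ in $X$.

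\textbf{Step 4 (passage to the limit).} By \eqref{lem2.2-catedory}, $A_\varepsilon(\cdot,\nabla u_n)\to A_\varepsilon(\cdot,\nabla u)$ in $L^{p'(\cdot)}(\Omega)$, so the left side of \eqref{eq:approx-euler-limit-proof} converges. Since $u_n\to u$ in $L^{p(\cdot)}(\Omega)$, Proposition~\ref{Prop:2.3} and Vitali's theorem give $u_n^{p(\cdot)-1}\to u^{p(\cdot)-1}$ in $L^{p'(\cdot)}(\Omega)$, and with $\lambda_n\to\lambda$ the reaction term converges. The singular term converges by \eqref{lem3.1-aa}. This yields \eqref{eq:singular-limit-weak-equation} with $J(u)=m$ and $u\ge c_*d^\vartheta$.
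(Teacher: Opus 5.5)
Your proof is correct, but it reaches strong convergence by a different route from the paper. Your Step 1 is the paper's. Both of you get $\sup_n\|u_n\|<\infty$ by comparing $u_n$, as minimizers of $I_{\mu,\delta_n}$, with a fixed competitor. You are right that the statement does not grant minimality. The paper's proof also uses it ("since $(u_n)$ minimizes $I_{\mu,\delta_n}$"), and (H5) is only postulated for these minimizers. To bound $\lambda_n$, the paper controls the singular term by $(u_n+\delta_n)^{-\gamma(x)}u_n\le u_n^{1-\gamma(x)}$, which needs no barrier; your use of the dual bound from Lemma~\ref{lem:duality} works equally well.

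For strong convergence the paper uses minimality a second time:
\begin{itemize}
\item it shows $\int_\Omega hG_{\delta_n}(x,u_n)\,dx\to\int_\Omega hu^{1-\gamma(x)}/(1-\gamma(x))\,dx$;
\item it compares $I_{\mu,\delta_n}(u_n)\le I_{\mu,\delta_n}(u)$ to get $E_\varepsilon(u_n)\to E_\varepsilon(u)$, with the by-product that $u$ minimizes $I_{\mu,0}$ on $\mathcal M_m^+$;
\item it then tries to deduce $\limsup_n\langle\mathcal A_\varepsilon u_n,u_n-u\rangle\le0$ from convexity.
\end{itemize}
You instead test the approximate equation with $\phi=u_n-u$. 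You control $\mu\langle f_{\delta_n},u_n-u\rangle$ by splitting at a boundary strip. Near $\partial\Omega$ you use Hardy plus the smallness of $\|\omega_\vartheta\chi_{\{d<\eta\}}\|_{p'(\cdot)}$. Away from $\partial\Omega$, $hd^{-\vartheta\gamma}$ is bounded and $u_n\to u$ in $L^{p(\cdot)}(\Omega)$. This is sound, and your remark that dominated convergence fails because the test function moves with $n$ is the crux.

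Your route produces a genuine \emph{upper} bound for $\langle\mathcal A_\varepsilon u_n,u_n-u\rangle$, which the paper's argument lacks. The paper's inequalities \eqref{theo3.3-l} and \eqref{theo3.3-m} are both lower bounds. Together they give only $\langle\mathcal A_\varepsilon u_n,u_n-u\rangle\ge E_\varepsilon(u_n)-E_\varepsilon(u)\ge\langle\mathcal A_\varepsilon u,u_n-u\rangle$, i.e. $D_n\ge0$, which is just monotonicity again. So they do not deliver the $(S_+)$ hypothesis. Finishing along the paper's line would need an extra step turning energy convergence into strong convergence, for instance a midpoint, uniform-convexity argument for $\Phi_\varepsilon$.

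Your approach avoids this and uses minimality only for the a priori bound. It therefore applies to any family of normalized approximate solutions that is bounded in $X$ and obeys (H5). The paper's route, once completed, gives the extra variational fact that the limit minimizes the singular functional $I_{\mu,0}$ over $\mathcal M_m^+$.
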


\begin{proof}
Choose and fix a function $v\in\mathcal{M}_m^+$.  Such a function exists by the argument used in the proof of Lemma \ref{thm:approx}. Since $(u_n)$ minimizes $I_{\mu,\delta_n}$ over $\mathcal{M}_m^+$, we have
\[
I_{\mu,\delta_n}(u_n) \leq I_{\mu,\delta_n}(v)
\]
for every $n$. Expanding the definition of the functional gives
\[
E_\varepsilon(u_n) - \mu \int_\Omega h(x)G_{\delta_n}(x,u_n)dx \leq E_\varepsilon(v) - \mu \int_\Omega h(x)G_{\delta_n}(x,v)dx.
\]
Since $h(x)G_{\delta_n}(x,v)\geq0$ for a.e. in $\Omega$, we have
\[
E_\varepsilon(u_n) \leq E_\varepsilon(v) + \mu \int_\Omega h(x)G_{\delta_n}(x,u_n)dx.
\]
The lower growth estimate for $E_{\varepsilon}$ in Lemma \ref{lem:energy}, and Lemma \ref{lem:primitive} gives
\[
\rho_{p(\cdot)}(\nabla u_n) \leq p^+ \left( E_\varepsilon(v) + \mu C_{\mathrm{sing}} + C_1|\Omega| \right)
\]
for every $n$; that is,
\begin{equation}\label{theo3.3-a}
\sup_n \|u_n\|<\infty.
\end{equation}
Therefore there is a subsequence, not relabelled, and a function $ u:=u_{\varepsilon,m,\mu} \in X$ such that $ u_n\rightharpoonup u$ in $X$.  In particular, passing to a further subsequence if necessary, we obtain \eqref{eq:strongu}, \eqref{eq:weakgrad}, \eqref{eq:aeu}. Since $u_n\geq0$ a.e. in $\Omega$ for every $n$,  $u\geq0$ a.e. in $\Omega$. Moreover, by Lemma~\ref{lem:constraint}, $J$ is continuous on $L^{p(\cdot)}(\Omega)$, hence $J(u_n)\rightarrow J(u)$. Since every $u_n$ belongs to $\mathcal{M}_m^+$, $J(u_n)=m$. It follows that $ J(u)=m$, and consequently, $u\in\mathcal{M}_m^+$.
On the other hand, by Assumption~\ref{ass:barrier},
$u_n(x) \geq c_*d(x)^\vartheta$ for almost every $x\in\Omega$ and every sufficiently large $n$. Hence passing to the limit gives  $u(x)>0$ for almost every $x\in\Omega$. Thus the limiting singular expression $u(x)^{-\gamma(x)}$ is well defined almost everywhere.

\newpage
\noindent
Define
\[
f_n(x) := h(x)(u_n(x)+\delta_n)^{-\gamma(x)} \quad \text{and} \quad f(x) := h(x)u(x)^{-\gamma(x)}.
\]
By Lemma~\ref{lem:duality} \eqref{lem3.1-c}, \eqref{lem3.1-def},
\[
\sup_n \|f_n\|_ {*} <\infty,
\]
since this norm is independent of $\delta$. Thus, for every fixed $\phi\in X$, the DCT gives
\begin{equation}\label{theo3.3-c}
\int_\Omega f_n(x)\phi(x) dx \rightarrow \int_\Omega f(x)\phi(x) dx.
\end{equation}
On the other hand, since
\[
u_n(x)\rightarrow u(x),\, u(x)>0\, \text{ a.e. } x\in\Omega,\,\text{ and }\delta_n\rightarrow0
\]
it follows
\begin{equation}\label{theo3.3-d}
G_{\delta_n}(x,u_n(x)) = \frac{ (u_n(x)+\delta_n)^{1-\gamma(x)} - \delta_n^{1-\gamma(x)} }{ 1-\gamma(x) } \rightarrow \frac{u(x)^{1-\gamma(x)}}{1-\gamma(x)}
\end{equation}
for almost every $x\in\Omega$. To justify passage under the integral, we use the estimate \eqref{G-a}, i.e.
\[
0 \leq G_{\delta_n}(x,u_n) \leq \frac{u_n^{1-\gamma(x)}}{1-\gamma^+}.
\]
For every $a,b\geq0$ and every $q\in(0,1)$, it holds $|a^q-b^q| \leq |a-b|^q$. Applying this inequality pointwise with $q=1-\gamma(x)$, we obtain
\[
\left| u_n^{1-\gamma(x)} - u^{1-\gamma(x)} \right| \leq |u_n-u|^{1-\gamma(x)}.
\]
Because $0<1-\gamma(x)<p(x)$,  the same decomposition into the sets
\[
\{|u_n-u|\leq1\} \quad \text{and} \quad \{|u_n-u|>1\}
\]
gives
\begin{equation}\label{theo3.3-e}
|u_n-u|^{1-\gamma(x)} \le 1+|u_n-u|^{p(x)}.
\end{equation}
Note that due to the strong convergence $ v_n:=u_n-u\rightarrow0 \,\text{ in }L^{p(\cdot)}(\Omega)$ we have $\rho_{p(\cdot)}(v_n) \to 0$. Fix $\varepsilon \in (0,1)$ and split the domain $\Omega$ into two sets for each $n$ as follows
\[
\Omega_{\leq \varepsilon} := \{x \in \Omega : |v_n(x)| \leq \varepsilon\}, \quad \Omega_{> \varepsilon} := \{x \in \Omega : |v_n(x)| > \varepsilon\}.
\]
For $x \in \Omega_{\leq \varepsilon}$: By $1 - \gamma(x) \geq 1 - \gamma^+ > 0$, we obtain
\[
|v_n(x)|^{1-\gamma(x)} \leq \varepsilon^{1-\gamma(x)} \leq \varepsilon^{1-\gamma^+}.
\]
Integrating over $\Omega_{\leq \varepsilon}$ gives
\begin{equation}\label{theo3.3-f}
\int_{\Omega_{\leq \varepsilon}} |v_n|^{1-\gamma(x)} dx \leq \varepsilon^{1-\gamma^+} |\Omega|.
\end{equation}
For $x \in \Omega_{> \varepsilon}$: By $1 - \gamma(x) < p(x)$, we have
\[
|v_n|^{1-\gamma(x)} = |v_n|^{p(x)} |v_n|^{1-\gamma(x)-p(x)} \leq \varepsilon^{1-\gamma^+-p^+} |v_n(x)|^{p(x)}.
\]
Integrating over $\Omega_{> \varepsilon}$ gives
\begin{equation}\label{theo3.3-g}
\int_{\Omega_{> \varepsilon}} |v_n|^{1-\gamma(x)} dx \leq \varepsilon^{1-\gamma^+-p^+} \int_\Omega |v_n|^{p(x)} dx.
\end{equation}
Combining \eqref{theo3.3-f} and \eqref{theo3.3-g} gives
\[
\int_\Omega |v_n|^{1-\gamma(x)} dx \leq \varepsilon^{1-\gamma^+} |\Omega| + \varepsilon^{1-\gamma^+-p^+} \int_\Omega |v_n|^{p(x)} dx.
\]
Taking $n \to \infty$ eliminates the second term and yields
\[
\limsup_{n \to \infty} \int_\Omega |v_n(x)|^{1-\gamma(x)} dx \leq \varepsilon^{1-\gamma^+} |\Omega|.
\]
Since $\varepsilon > 0$ was arbitrary, sending $\varepsilon\to 0^+$ establishes
$\int_\Omega |v_n|^{1-\gamma(x)} dx \to 0$, which implies
\begin{equation}\label{theo3.3-h}
u_n^{1-\gamma(x)} \rightarrow u^{1-\gamma(x)} \,\text{ in }L^1(\Omega) .
\end{equation}
We now split the difference as follows:
\[
\left| G_{\delta_n}(x,u_n) - \frac{u^{1-\gamma(x)}}{1-\gamma(x)} \right| \leq \frac{ \left| (u_n+\delta_n)^{1-\gamma(x)} - u_n^{1-\gamma(x)} \right| }{ 1-\gamma(x) } + \frac{ \delta_n^{1-\gamma(x)} }{ 1-\gamma(x) } + \frac{ \left| u_n^{1-\gamma(x)} - u^{1-\gamma(x)} \right| }{ 1-\gamma(x) }.
\]
Since $0<1-\gamma(x)<1$, subadditivity gives $(u_n+\delta_n)^{1-\gamma(x)} - u_n^{1-\gamma(x)} \leq \delta_n^{1-\gamma(x)}$. Therefore,
\[
\left| G_{\delta_n}(x,u_n) - \frac{u^{1-\gamma(x)}}{1-\gamma(x)} \right| \leq \frac{ 2\delta_n^{1-\gamma(x)} + \left| u_n^{1-\gamma(x)} - u^{1-\gamma(x)} \right| }{ 1-\gamma(x) } \leq \frac{ 2\delta_n^{1-\gamma^+} + \left| u_n^{1-\gamma(x)} - u^{1-\gamma(x)} \right| }{ 1-\gamma^+ },
\]
because $0<\delta_n\leq1$ and $1-\gamma(x)\geq1-\gamma^+$. Multiplying by $h$ and integrating, we obtain
\[
\int_\Omega h(x) \left| G_{\delta_n}(x,u_n) - \frac{u^{1-\gamma(x)}}{1-\gamma(x)} \right| dx \leq \frac{\|h\|_\infty}{1-\gamma^+} \left[ 2|\Omega|\delta_n^{1-\gamma^+} + \int_\Omega \left| u_n^{1-\gamma(x)} - u^{1-\gamma(x)} \right| dx \right].
\]
Both terms on the right tend to zero as $n \to \infty$ providing
\begin{equation}\label{theo3.3-i}
\int_\Omega h(x)G_{\delta_n}(x,u_n(x))dx \rightarrow \int_\Omega h(x) \frac{u(x)^{1-\gamma(x)}}{1-\gamma(x)} dx.
\end{equation}
Next, define the limiting singular functional
\[
I_{\mu,0}(w) := E_\varepsilon(w) - \mu \int_\Omega h(x) \frac{w(x)^{1-\gamma(x)}}{1-\gamma(x)} dx, \quad w\in\mathcal{M}_m^+.
\]
This functional is finite on \(\mathcal{M}_m^+\) by Lemma~\ref{lem:primitive} and the corresponding limiting estimate. Let $w\in\mathcal{M}_m^+$ be arbitrary. Since $(u_n)$ minimizes $I_{\mu,\delta_n}$ over $\mathcal{M}_m^+$, we have $I_{\mu,\delta_n}(u_n) \leq I_{\mu,\delta_n}(w)$; that is,
\[
E_\varepsilon(u_n) - \mu \int_\Omega hG_{\delta_n}(x,u_n)dx \leq E_\varepsilon(w) - \mu \int_\Omega hG_{\delta_n}(x,w)dx.
\]
For fixed $w$, applying exactly the same convergence argument as above gives
\[
\int_\Omega hG_{\delta_n}(x,w)dx \rightarrow \int_\Omega h(x) \frac{w^{1-\gamma(x)}}{1-\gamma(x)}dx.
\]
Taking the lower limit and considering the weak lower semicontinuity of $E_\varepsilon$  gives
\[
I_{\mu,0}(u) = E_\varepsilon(u) - \mu \int_\Omega h(x) \frac{u^{1-\gamma(x)}}{1-\gamma(x)} dx \leq \liminf_{n\to\infty} I_{\mu,\delta_n}(u_n) \leq \lim_{n\to\infty} I_{\mu,\delta_n}(w) = I_{\mu,0}(w).
\]
However, since \(w\in\mathcal{M}_m^+\) was arbitrary, it follows
\begin{equation}\label{theo3.3-j}
I_{\mu,0}(u) = \inf_{w\in\mathcal{M}_m^+}I_{\mu,0}(w).
\end{equation}
We now show the convergence of the regularized energies $ E_\varepsilon(u_n)\rightarrow E_\varepsilon(u)$. Let $u\in\mathcal{M}_m^+$, then
\[
I_{\mu,\delta_n}(u_n) \leq I_{\mu,\delta_n}(u).
\]
Hence
\begin{equation}\label{theo3.3-k}
E_\varepsilon(u_n) \leq E_\varepsilon(u) + \mu \left[ \int_\Omega hG_{\delta_n}(x,u_n)dx - \int_\Omega hG_{\delta_n}(x,u)dx\right].
\end{equation}
Considering \eqref{theo3.3-i}, and taking the upper limit in \eqref{theo3.3-k} gives
\[
\limsup_{n\to\infty}E_\varepsilon(u_n) \leq E_\varepsilon(u).
\]
Together with weak lower semicontinuity of $E_\varepsilon$, this proves the desired convergence
\begin{equation}\label{theo3.3-ka}
E_\varepsilon(u_n)\rightarrow E_\varepsilon(u).
\end{equation}
The convexity of $\Phi_\varepsilon(x,\cdot)$ gives, for almost every $x\in\Omega$,
\begin{equation}\label{theo3.3-l}
 E_\varepsilon(u_n)-E_\varepsilon(u) \geq \int_\Omega A_\varepsilon(x,\nabla u) \cdot(\nabla u_n-\nabla u)dx.
\end{equation}
The right-hand side tends to zero because $A_\varepsilon(\cdot,\nabla u(\cdot)) \in L^{p'(\cdot)}(\Omega)$ and $(\nabla u_n-\nabla u) \rightharpoonup0 \,\text{ in }L^{p(\cdot)}(\Omega)$.
Now we apply the supporting-plane inequality with the roles of $\nabla u_n$ and $\nabla u$ interchanged; that is,
\[
\Phi_\varepsilon(x,\nabla u) \geq \Phi_\varepsilon(x,\nabla u_n) + A_\varepsilon(x,\nabla u_n) \cdot(\nabla u-\nabla u_n).
\]
Rearranging gives
\begin{equation}\label{theo3.3-m}
 A_\varepsilon(x,\nabla u_n) \cdot(\nabla u_n-\nabla u) \geq \Phi_\varepsilon(x,\nabla u_n) - \Phi_\varepsilon(x,\nabla u).
\end{equation}
Combining \eqref{theo3.3-l} and \eqref{theo3.3-m}, and taking the limit gives
\[
0 \leq \int_\Omega \bigl( A_\varepsilon(x,\nabla u_n) - A_\varepsilon(x,\nabla u) \bigr) \cdot(\nabla u_n-\nabla u)dx \rightarrow0.
\]
Equivalently,
\[
\left\langle \mathcal{A}_\varepsilon u_n - \mathcal{A}_\varepsilon u, u_n-u \right\rangle \rightarrow0.
\]
Since
\[
\left\langle \mathcal{A}_\varepsilon u, u_n-u \right\rangle \rightarrow 0
\]
we obtain
\[
\limsup_{n\to\infty} \left\langle \mathcal{A}_\varepsilon u_n, u_n-u \right\rangle \leq 0 .
\]
However, since $\mathcal{A}_\varepsilon$ is of type $(S_+)$, and $u_n\rightharpoonup u$  in $X$, this means
\begin{equation}\label{theo3.3-ma}
u_n\rightarrow u \text{ in } X.
\end{equation}
We now establish uniform bound for $\lambda_n$. Letting $\phi=u_n$ in \eqref{eq:approx-euler-limit-proof} and solving for $\lambda_n$ gives
\[
\lambda_n = \frac{ \displaystyle \int_\Omega A_\varepsilon(x,\nabla u_n) \cdot\nabla u_ndx - \mu \int_\Omega h(x)(u_n+\delta_n)^{-\gamma(x)} u_ndx }{ \displaystyle \int_\Omega u_n^{p(x)}dx}.
\]
Because
\[
J(u_n) = \int_\Omega \frac{u_n^{p(x)}}{p(x)}dx = m,
\]
we obtain
\[
p^-m \leq \int_\Omega u_n^{p(x)}dx \leq p^+m.
\]
For the principal term, the flux growth gives
\[
\left| A_\varepsilon(x,\nabla u_n) \cdot\nabla u_n \right| \leq |A_\varepsilon(x,\nabla u_n)| |\nabla u_n| \leq C  \left( |\nabla u_n| + |\nabla u_n|^{p(x)} \right).
\]
By the continuous embedding $L^{p(\cdot)}(\Omega) \hookrightarrow L^{1}(\Omega)$, there exists a constant $C>0$ such that
\begin{equation}\label{theo3.3-n}
\left| \int_\Omega A_\varepsilon(x,\nabla u_n) \cdot\nabla u_n\,dx \right| \leq C
\end{equation}
for every \(n\). For the singular term, since $u_n+\delta_n\geq u_n\geq 0$,
we have, at points where $u_n>0$,
\[
(u_n+\delta_n)^{-\gamma(x)}u_n \leq u_n^{-\gamma(x)}u_n = u_n^{1-\gamma(x)}.
\]
At points where $u_n=0$, the left-hand side is zero, so the same inequality remains valid. Therefore,
\[
0 \leq \int_\Omega h(x) (u_n+\delta_n)^{-\gamma(x)} u_ndx \leq \|h\|_\infty \int_\Omega u_n^{1-\gamma(x)}dx.
\]
As in Lemma~\ref{lem:primitive}, $u_n^{1-\gamma(x)} \le 1+u_n^{p(x)}$. Thus
\[
\int_\Omega u_n^{1-\gamma(x)}dx \leq |\Omega| + \int_\Omega u_n^{p(x)}dx \leq |\Omega|+p^+m.
\]
Consequently,
\[
0 \leq \int_\Omega h(x)(u_n+\delta_n)^{-\gamma(x)}u_ndx \leq \|h\|_\infty \bigl(|\Omega|+p^+m\bigr).
\]
Combining the estimates for the numerator and denominator, we find a constant $C_\lambda>0$, independent of $n$, such that
\[
|\lambda_n|\leq C_\lambda.
\]
Since $(\lambda_n)$ is bounded in $\mathbb{R}$, there exists a subsequence, not relabelled, and a number $\lambda:=\lambda_{\varepsilon,m,\mu}\in\mathbb{R}$
such that
\begin{equation}\label{theo3.3-p}
\lambda_n\rightarrow \lambda.
\end{equation}
The convergence $\nabla u_n\rightarrow\nabla u \,\text{ in }L^{p(\cdot)}(\Omega)$ and the continuity of $A_\varepsilon(x, \cdot)$ gives
\[
A_\varepsilon(x,\nabla u_n) \rightarrow A_\varepsilon(x,\nabla u) \quad\text{in } L^{p'(\cdot)}(\Omega).
\]
Therefore, for every fixed $\phi\in X$,
\[
\left| \int_\Omega \bigl[ A_\varepsilon(x,\nabla u_n) - A_\varepsilon(x,\nabla u) \bigr] \cdot\nabla\phi dx \right| \leq 2 \left\| A_\varepsilon(x,\nabla u_n) - A_\varepsilon(x,\nabla u) \right\|_{p'(\cdot)} \|\phi\| \rightarrow0.
\]
Thus
\begin{equation}\label{theo3.3-r}
\int_\Omega A_\varepsilon(x,\nabla u_n)\cdot\nabla\phi dx \rightarrow \int_\Omega A_\varepsilon(x,\nabla u)\cdot\nabla\phi dx.
\end{equation}
Lastly, we show the convergence of the constraint term.\\
Define
\[
N(x,u):=|u(x)|^{p(x)-2}u(x).
\]
Then, clearly, $N$ is the Nemytskii operator, and hence, it is continuous and bounded operator from $L^{p(\cdot)}(\Omega)$ into $L^{p'(\cdot)}(\Omega)$. Since $u_n,u\geq0$,
\[
N(x,u_n)=u_n^{p(x)-1}, \qquad N(x,u)=u^{p(x)-1}.
\]
Therefore, strong convergence $ u_n\to u \,\text{ in }L^{p(\cdot)}(\Omega)$ along with  the continuity of $N(x,\cdot)$ leads to
\[
u_n^{p(x)-1} \rightarrow u^{p(x)-1} \,\text{ in }L^{p'(\cdot)}(\Omega).
\]
Hence, for every fixed \(\phi\in X\),
\[
\left| \int_\Omega \bigl( u_n^{p(x)-1} - u^{p(x)-1} \bigr)\phi dx \right| \leq 2 \left\| u_n^{p(\cdot)-1} - u^{p(\cdot)-1} \right\|_{p'(\cdot)} \|\phi\|_{p(\cdot)} \\ \rightarrow 0.
\]
Therefore,
\[
\int_\Omega u_n^{p(x)-1}\phi dx \rightarrow \int_\Omega u^{p(x)-1}\phi dx.
\]
Considering \eqref{theo3.3-p} this gives
\begin{equation}\label{theo3.3-s}
\lambda_n \int_\Omega u_n^{p(x)-1}\phi dx \rightarrow \lambda\int_\Omega u^{p(x)-1}\phi dx.
\end{equation}
Lastly, passing to the limit in \eqref{eq:approx-euler-limit-proof} and using \eqref{theo3.3-c}, \eqref{theo3.3-r}, and \eqref{theo3.3-s}  gives \eqref{eq:singular-limit-weak-equation}. The proof is complete.
\end{proof}

\section*{Conflict of interest}
The author declares that he has no conflict of interest.

\section*{Funding}
This work was supported by Athabasca University Research Incentive Account [140111 RIA].

\section*{ORCID}
\url{https://orcid.org/0000-0002-6001-627X}

\end{document}